\newcommand{\half}{\frac{1}{2}}
\newcommand{\abs}[1]{\vert #1 \vert}
\newcommand{\R}{\mathbb{R}}
\begin{document} 
\newtheorem{prop}{Proposition}[section]
\newtheorem{Def}{Definition}[section]
\newtheorem{theorem}{Theorem}[section]
\newtheorem{lemma}{Lemma}[section]
 \newtheorem{Cor}{Corollary}[section]

\title[MKG in 2D]{\bf Improved well-posedness results for the
Maxwell-Klein-Gordon system in 2D}
\author[Hartmut Pecher]{
{\bf Hartmut Pecher}\\
Fakult\"at f\"ur  Mathematik und Naturwissenschaften\\
Bergische Universit\"at Wuppertal\\
Gau{\ss}str.  20\\
42119 Wuppertal\\
Germany\\
e-mail {\tt pecher@math.uni-wuppertal.de}}
\date{}

\begin{abstract}
The local well-posedness problem for the Maxwell-Klein-Gordon system in Coulomb gauge as well as Lorenz gauge is treated in two space dimensions for data with minimal regularity assumptions. In the classical case of data in $L^2$-based Sobolev spaces $H^s$ and $H^l$ for the electromagnetic field $\phi$ and the potential $A$, respectively. The minimal regularity assumptions are $s > \half$ and $l > \frac{1}{4}$ , which leaves a gap of $\half$ and $\frac{1}{4}$ to the critical regularity with respect to scaling $s_c = l_c =0$ . This gap can be reduced for data in Fourier-Lebesgue spaces $\widehat{H}^{s,r}$ and $\widehat{H}^{l,r}$ to $s> \frac{21}{16}$  and $l > \frac{9}{8}$ for $r$ close to $1$ , whereas the critical exponents with respect to scaling fulfill $s_c \to 1$ , $ l_c \to 1 $ as $r \to 1$ . Here $\|f\|_{\widehat{H}^{s,r}} := \| \langle \xi \rangle^s \tilde{f}\|_{L^{r'}_{\tau \xi}} \, , \, 1 < r \le 2 \, , \, \frac{1}{r}+\frac{1}{r'} = 1 \, . $ Thus the gap is reduced for $\phi$ as well as $A$ in both gauges.
\end{abstract}
\maketitle
\renewcommand{\thefootnote}{\fnsymbol{footnote}}
\footnotetext{\hspace{-1.5em}{\it 2020 Mathematics Subject Classification:} 
35Q40, 35L70 \\
{\it Key words and phrases:} Maxwell-Klein-Gordon,  
local well-posedness, Lorenz gauge, Coulomb gauge}
\normalsize 
\setcounter{section}{0}
  
\section{Introduction and main results}
\noindent Consider the Maxwell-Klein-Gordon system 
\begin{align}
\label{1}
D_{\mu} D^{\mu} \phi & = m^2 \phi \\
\label{2}
\partial^{\nu} F_{\mu \nu} & =  j_{\mu} 
 \, ,
\end{align}
where $m>0$ is a constant and
\begin{align}
\label{3}
F_{\mu \nu} & := \partial_{\mu} A_{\nu} - \partial_{\nu} A_{\mu} \\
\label{4}
D_{\mu} \phi & := \partial_{\mu} + iA_{\mu} \phi \\
\nonumber
j_{\mu} & := Im(\phi \overline{D_{\mu} \phi}) = Im(\phi \overline{\partial_{\mu}
\phi}) + |\phi|^2 A_{\mu} \, .
\end{align}
Here $F_{\mu \nu} : {\mathbb R}^{2+1} \to {\mathbb R}$ denotes the
electromagnetic field, $\phi : {\mathbb R}^{2+1} \to {\mathbb C}$ a scalar field
and $A_{\nu} : {\mathbb R}^{2+1} \to {\mathbb R}$ the potential. We use the
notation $\partial_{\mu} = \frac{\partial}{\partial x_{\mu}}$, where we write
$(x^0,x^1,x^2) = (t,x^1,x^2)$ and also $\partial_0 = \partial_t$ and $\nabla =
(\partial_1,\partial_2)$. Roman indices run over $1,2$ and greek indices over
$0,1,2$ and repeated upper/lower indices are summed. Indices are raised and
lowered using the Minkowski metric $diag(-1,1,1)$.

The Maxwell-Klein-Gordon system describes the motion of a spin 0 particle with
mass $m$ self-interacting with an electromagnetic field.

We are interested in local well-posedness (LWP) results, so we have to choose an
appropriate gauge. In this paper we consider the Coulomb gauge $ \partial_j
A^j=0$ and the Lorenz gauge $\partial_{\mu} A^{\mu} =0$ . Our aim is to minimize
the regularity assmptions for the Cauchy data. If we consider data in classical
$L^2$-based $H^s$-spaces we improve the result in Lorenz gauge and also slightly
in Coulomb gauge. In order to improve the results further with respect to
scaling we use data in Fourier-Lebesgue spaces $\widehat{H}^{s,r}$ (cf.  the
definition below) with $1<r \le 2$ and obtain an improvement for $r$ close to 1.

Let us make some historical remarks. In space dimension $n=3$ , Coulomb gauge
and Cauchy data $\phi(0)=\phi_0 \in H^s$ , $(\partial_t \phi)(0) \in H^{s-1}$ ,
$A_j(0)= a_{0j} \in H^l$, $(\partial_t A_j)(0) = b_{0j} \in H^{l-1}$ ,
Klainerman and Machedon \cite{KM} proved global well-posedness in energy space
and above ($s=l \ge 1$) . They detected that the nonlinearities fulfill a null
condition. The global well-posedness result was improved by Keel, Roy and Tao
\cite{KRT} to the condition  $s=l > \frac{\sqrt{3}}{2}$ . LWP for low regularity
data was shown by Cuccagna \cite{C} for $l=s > \frac{3}{4}$ and $n=3$ . Selberg
\cite{S} remarked that a smallness assumption in \cite{C} can be removed.

In the case $n=2$ and with Coulomb gauge Czubak and Pikula \cite{CP} obtained
LWP if either $1\ge s=l > \half$ or $s=\frac{5}{8}+\epsilon $ , $l=
\frac{1}{4}+\epsilon$       , where $\epsilon > 0$ is arbitrary. Their methods are crucial for our results.

Another paper was is fundamental for our results is due to Selberg and Tesfahun \cite{ST}.  In Lorenz gauge and $n=3$ they  proved global
well-posedness for finite energy data $\phi_0 \in H^1$ , $\phi_1 \in L^2$ ,
$F_{\mu \nu}(0)=F^0_{\mu \nu} \in L^2$ , $A_{\nu}(0) = a_{0 \nu} \in \dot{H}^1$
, $(\partial_t A_{\nu})(0) = b_{0 \nu} \in L^2$. The solution fulfills $\phi \in
C^0(\R,H^1) \cap C^1(\R,L^2) $, $F_{\mu \nu} \in C^0(\R,L^2)$ . The potential
possibly loses some regularity compared to the data, which however is of minor
interest. In Lorenz gauge the situation is more delicate, because the
nonlinearity $Im(\phi \overline{\partial_{\mu} \phi})$ has no null structure.
The author \cite{P} obtained LWP for less regular data, namely $\phi_0 \in H^s$
, $\phi_1 \in H^{s-1}$ , $F^0_{\mu \nu} \in H^{s-1}$ , $\nabla a_{0\nu} \in
H^{s-1}$ , $b_{0\nu} \in H^{s-1}$ , with $s >\frac{3}{4}$, where $\phi \in
C^0([0,T],H^s) \cap C^1([0,T],H^{s-1}) $, $F_{\mu \nu} \in C^0([0,T],H^{s-1})
\cap C^1([0,T],H^{s-2}) $ . $A_{\nu}$ loses regularity compared to $a_{0 \nu}$
and $b_{0  \nu}$ , we only obtain $A_{\nu} =  A_{\nu}^{hom} + A_{\nu}^{inh}$
with $\nabla A_{\nu}^{hom} , \partial_t A_{\nu}^{hom} \in C^0([0,T],H^{s-1}) $ ,
$A_{\nu}^{inh} \in C^0([0,T],H^l) \cap C^0([0,T],H^{l-1}) $ , where $l > \half$
. 

In Lorenz gauge and $n=2$ the author \cite{P} proved similar results for
slightly modified solution spaces in the case $s >\frac{3}{4}$ , $l >
\frac{1}{4}$ .

As can be easily seen the minimal regularity for LWP predicted by scaling is
$s_c = \frac{n}{2}-1$ , so that for $n=3$ there is a gap of $\frac{1}{4}$ both
in Coulomb and Lorenz gauge. The author  \cite{P1} for $n=3$ and Lorenz gauge
closed this gap up to the endpoint in the sense of scaling, if Cauchy data are
given in Fourier-Lebesgue spaces $\widehat{H}^{s,r}$ instead of standard
$L^2$-based Sobolev spaces $H^s = \widehat{H}^{s,2}$ . Here we define
$$\|f\|_{\widehat{H}^{s,r}} = \| \langle \xi \rangle^s
\widehat{f}(\xi)\|_{L^{r'}}  \, ,$$ 
 where $\frac{1}{r}+\frac{1}{r'} =1$ , for $1<r \le 2$ . More precisely, given
Cauchy data $\phi_0 \in \widehat{H}^{s,r} $ , $\phi_1 \in \widehat{H}^{s-1,r} $
, $ F^0_{\mu \nu} \in \widehat{H}^{s-1,r}$ , $\nabla a_{0 \nu} \in
\widehat{H}^{s-1,r} $ , $b_{0 \nu} \in \widehat{H}^{s-1,r} $ , where $ s >
\frac{5}{2r} - \half$ , there exists a local solution $ \phi \in
C^0([0,T],\widehat{H}^{s,r}) \cap  C^1([0,T],\widehat{H}^{s-1,r})$, $\nabla
F_{\mu \nu} , \partial_t F_{\mu \nu} \in  C^0([0,T],\widehat{H}^{s-2,r}) $
relative to a potential $A_{\mu} \in C^0([0,T],\widehat{H}^{l,r}) \cap 
C^1([0,T],\widehat{H}^{l-1,r})$, where $l > \frac{3}{r}-1$ . In the limit $r \to
1 $ ($r > 1$) the condition reduces to $ s > 1$ , $l>2$, which is optimal up to
the endpoint.

Cauchy data in Fourier-Lebesgue spaces were previously considered among others
by Gr\"unrock \cite{G} and Gr\"unrock-Vega \cite{GV} for the KdV and the
modified  KdV-equation. They were also used by Gr\"unrock \cite{G1} in order to
prove almost optimal LWP results for wave equations with quadratic nonlinearity
for $n=3$ .  For $n=2$ this was  proven by Grigoryan-Nahmod \cite{GN} for wave
equations with nonlinear terms which fulfill a null condition. These results
rely on an adaptation of bilinear estimates by Foschi-Klainerman \cite{FK} in
the classical $L^2$-case.

In the present paper we consider exclusively the case $n=2$ in Coulomb as well
as Lorenz gauge for data in standard $L^2$-based Sobolev saces as well as
Fourier-Lebesgue spaces. Our object is LWP for data with minimal regularity
assumptions.

Let us first consider the system (\ref{1}),(\ref{2}) in Coulomb gauge. It is
well-known that we obtain the equivalent system
\begin{align}
\label{5}
\Delta A_0 &= -Im(\phi \overline{\partial_t \phi}) + |\phi|^2 A_0 \, , \\
\label{6}
\Box A_j  & =Im(\phi \overline{\partial_j \phi}) +  |\phi|^2 A_j - \partial_j
\partial_t A_0 \, , \\
\label{7}
\Box \phi & = -2i A^j \partial_j \phi + 2i A_0 \partial_t \phi + i (\partial_t
A_0)\phi + A^{\mu}A_{\mu} \phi + m^2 \phi \, , \\
\label{8}
\partial^j A_j & =0 \, .
\end{align}
We consider the Cauchy problem for this system with data
\begin{align}
\label{9}
A_j(0) = a_j \in \widehat{H}^{l,r} \quad &, \quad (\partial_t A_j)(0) = b_j \in
\widehat{H}^{l-1,r} \, , \\
\label{10}
\phi(0) = \phi_0 \in \widehat{H}^{s,r} \quad &, \quad  (\partial_t \phi)(0) =
\phi_1 \in \widehat{H}^{s-1,r} \, , 
\end{align}
which fulfill the compatibility condition
\begin{equation}
\label{11}
\partial^j a_j = \partial_j b_j = 0 \, .
\end{equation}
The natural scaling transformation for Maxwell-Klein-Gordon is given by
$$ A_{\mu}(t,x) \, , \, \phi(t,x) \, \rightarrow \, \lambda A_{\mu}(\lambda
t,\lambda x) \, , \, \lambda \phi(\lambda t, \lambda x) $$
for $\lambda > 0$ . Since $\|\lambda f(\lambda x) \|_{\dot{H}^s} =
\lambda^{s-\frac{n}{2}+1} \|f\|_{\dot{H}^s} $ , we conclude that $s_c =
\frac{n}{2}-1$ is critical, i.e. , LWP is expected for $s \ge s_c$ at best and
ill-posedness for $s<s_c$ . More generally we also obtain for $1<r\le 2$ :
$\|\lambda f(\lambda x) \|_{\dot{\widehat{H}}^{s,r}} = \lambda^{s-\frac{n}{r}+1}
\|f\|_{\dot{\widehat{H}}^{s,r}}  $ , so that $s_c = \frac{n}{r}-1$ . 

Before formulating our main result we introduce some notation.
We denote the Fourier transform with respect to space and time  by
$\,\,\widehat{}\,$ . 
$u \precsim v$ is defined by $|\widehat{u}| \lesssim |\widehat{v}|$ ,
 $\Box = \partial_t^2 - \Delta$ is the d'Alembert operator,
$a\pm := a \pm \epsilon$ for a sufficiently small $\epsilon >0$ , so that $a++ > a+ > a$ , and $\langle
\,\cdot\, \rangle := (1+|\cdot|^2)^{\frac{1}{2}}$ . \\
Let $\Lambda^{\alpha}$
be the multiplier with symbol  $
\langle\xi \rangle^\alpha $ . Similarly let $D^{\alpha}$,
and $D_{-}^{\alpha}$ be the multipliers with symbols $
\abs{\xi}^\alpha$ and $\quad ||\tau|-|\xi||^\alpha$ ,
respectively.

\begin{Def}
\label{Def.}
Let $1\le r\le 2$ , $s,b \in \R$ . The Fourier-Lebesgue space
$\widehat{H}^{s,r}$ is the completion of the Schwarz space ${\mathcal
S}(\R^{2})$ with norm $\|f\|_{\widehat{H}^{s,r}} = \| \langle \xi \rangle^s
\widehat{f}(\xi)\|_{L^{r'}}$  , where $r'$ is the dual exponent to $r$ , and $
\dot{\widehat{H}}^{s,r}$ denotes the homogeneous space.
The wave-Sobolev spaces $H^r_{s,b}$ are the completion of the Schwarz space
${\mathcal S}(\R^{1+2})$ with norm
$$ \|u\|_{H^r_{s,b}} = \| \langle \xi \rangle^s \langle  |\tau| - |\xi|
\rangle^b \widehat{u}(\tau,\xi) \|_{L^{r'}_{\tau \xi}} \, , $$ 
where $r'$ is the dual exponent to $r$.
We also define $H^r_{s,b}[0,T]$ as the space of the restrictions of functions in
$H^r_{s,b}$ to $[0,T] \times \mathbb{R}^2$.  Similarly we define $X^r_{s,b,\pm}
$ with norm  $$ \|\phi\|_{X^r_{s,b\pm}} := \| \langle \xi \rangle^s \langle \tau
\pm |\xi| \rangle^b \tilde{\phi}(\tau,\xi)\|_{L^{r'}_{\tau \xi}} $$ and
$X^r_{s,b,\pm}[0,T] $ .
$\dot{H}^r_{s,b}$ and $\dot{X}^r_{s,b,\pm} $ are the corresponding homogeneous
spaces,  where $\langle \xi \rangle$ is replaced by $|\xi|$ .
In the case $r=2$ we denote $H^2_{s,b}= H^{s,b}$ and $X^2_{s,b\pm} =
X^{s,b}_{\pm}$. For brevity we denote $\|u\|_{X^r_{s,b}} = \|u\|_{X^r_{s,b,+}} +
\|u\|_{X^r_{s,b,-}} $ .
\end{Def}
Our main result is the following theorem.
\begin{theorem}
\label{Theorem1}
Let $1<r \le 2$ . Assume that $s-1 \le l \le s$ , $s >
\frac{13}{8r}-\frac{5}{16}$, $ l > \frac{7}{4r}-\frac{5}{8}$ and $2s-l >
\frac{3}{2r}$ , $2l-s > \frac{2}{r} - \frac{5}{4}$ . Then the
Maxwell-Klein-Gordon system (\ref{5}) - (\ref{11}) is locally
well-posed in the sense that there exist $T > 0$ and $b > \frac{1}{r}$ , such
that there exists a unique solution
$$ A_0 \in C^0([0,T],\dot{H}^{0+}) \cap \dot{H}^{\min{(2s,s+1)}} \cap
C^1([0,T],\dot{H}^{0+} \cap \dot{H}^{\min{(2s-1,s)}}) \, , $$
$$ A_j \in X^r_{l,b,+}[0,T] + X^r_{l,b,-}[0,T]\,  , \, \phi \in 
X^r_{s,b,+}[0,T] + X^r_{s,b,-}[0,T] \, . $$
This solution satisfies
$$ A_j \in C^0([0,T],\widehat{H}^{l,r}) \cap  C^1([0,T],\widehat{H}^{l-1,r}) \,
, \, \phi \in C^0([0,T],\widehat{H}^{s,r}) \cap C^1([0,T],\widehat{H}^{s-1,r})
\, . $$
The solution depends continuously on the data and persistence of higher
regularity pertains.
\end{theorem}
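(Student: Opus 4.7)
The strategy is to reformulate the system (\ref{5})--(\ref{8}) as a Picard iteration in the Fourier-Lebesgue Bourgain spaces $X^r_{s,b,+}[0,T]+X^r_{s,b,-}[0,T]$ for $\phi$ and $A_j$, choosing $b$ slightly larger than $1/r$ so that the embedding into $C^0([0,T],\widehat{H}^{s,r})$ holds and the inhomogeneous linear estimate yields a small factor $T^{0+}$. The elliptic equation (\ref{5}) for $A_0$ is handled separately: inverting the Laplacian gives $A_0 = -\Delta^{-1}\operatorname{Im}(\phi\overline{\partial_t\phi}) + \Delta^{-1}(|\phi|^2 A_0)$, and a fixed-point argument at each fixed $t$ (performed in the spaces indicated in the theorem statement) yields $A_0$ and $\partial_t A_0$ as continuous functionals of $\phi$, $\partial_t \phi$. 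The source $-\partial_j \partial_t A_0$ in (\ref{6}) is thereby reduced to a cubic expression in $\phi$ and its first-order derivatives.

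Next I would decompose $\phi = \phi_+ + \phi_-$ and $A_j = A_{j,+} + A_{j,-}$ via $u_\pm := \tfrac{1}{2}(u \mp i \Lambda^{-1}\partial_t u)$, so that on each summand the linear part is a half-wave propagator and the standard transfer principle reduces the contraction to proving multilinear estimates of the form
\[
\|N(u,v)\|_{X^r_{s,b-1+,\pm}} \lesssim \|u\|_{X^r_{s_1,b,\pm_1}}\|v\|_{X^r_{s_2,b,\pm_2}}
\]
together with analogous cubic estimates. The crucial bilinear nonlinearities are $A^j \partial_j \phi$ in (\ref{7}) and $\operatorname{Im}(\phi \overline{\partial_j \phi})$ in (\ref{6}); both carry the Klainerman--Machedon null structure, since the Coulomb constraint $\partial^j A_j = 0$ forces $A_j$ to be transverse to $\nabla\phi$ in frequency, bounding the symbol by the angular factor $|\xi_1 \wedge \xi_2|$. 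Using the Klainerman--Machedon symbol estimate $|\xi_1 \wedge \xi_2|^2 \lesssim \min(|\xi_1|,|\xi_2|) \cdot \max_i \bigl| |\tau_i|-|\xi_i| \bigr|$, the $\phi$- and $A_j$-equations reduce to Foschi--Klainerman-type bilinear $L^{r'}$-convolution estimates on $\R^{1+2}$, for which I would use the Fourier-Lebesgue versions developed in \cite{G1} and, in the $n=2$ null-form setting, in \cite{GN}, along the template of \cite{CP} and \cite{P1}. The sum conditions $2s-l > 3/(2r)$ and $2l-s > 2/r - 5/4$ should correspond to the borderline cases of these bilinear null-form estimates, while the diagonal conditions $s > 13/(8r)-5/16$ and $l > 7/(4r)-5/8$ arise from the cubic terms $A^\mu A_\mu \phi$, $|\phi|^2 A_j$ and from the $\partial_t A_0$-type contributions, which carry no null structure.

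The main obstacle is precisely the low-regularity null-form bilinear estimate in the Fourier-Lebesgue scale: outside the $L^2$ world, Plancherel and the standard Strichartz calculus are unavailable, and one must estimate convolutions of $L^{r'}$-functions directly. I expect the analysis to proceed by dyadic decomposition in the spatial frequencies and in the modulation weights $\tau \pm |\xi|$, separating hyperbolic ($|\tau|\sim|\xi|$) and elliptic regions, applying the null-symbol bound to trade the gradient loss for a gain in one of the three modulations, and then invoking a scale-invariant bilinear $L^{r'}$-convolution lemma of Foschi--Klainerman / Grigoryan--Nahmod type. Once all bilinear and trilinear estimates are verified with a gain $T^{\varepsilon}$, a standard contraction in a small ball yields existence and uniqueness; the corresponding difference estimates are structurally identical, giving continuous dependence and persistence of higher regularity.
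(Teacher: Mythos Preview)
Your overall strategy is correct and matches the paper's: rewrite as a first-order system in $\phi_\pm,A_{j,\pm}$, apply the abstract contraction theorem (Theorem~\ref{Theorem0.3}), and reduce everything to bilinear/trilinear estimates in $H^r_{s,b}$-type spaces, exploiting the null structure of $A^j\partial_j\phi$ and $\operatorname{Im}(\phi\overline{\partial_j\phi})$ via the reformulation (\ref{12})--(\ref{15}). Your identification of which thresholds come from which terms is also essentially right.

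There are, however, two concrete technical points where the paper proceeds differently and where your sketch would need adjustment. First, for the elliptic part you propose a fixed-point for $A_0$ in the equation $A_0 = -\Delta^{-1}\operatorname{Im}(\phi\overline{\partial_t\phi}) + \Delta^{-1}(|\phi|^2 A_0)$ at each~$t$. The paper instead (following \cite{CP}) solves the \emph{linear} equation (\ref{13}) for $B_0=\partial_t A_0$ first, then sets $A_0(t)=\int_0^t B_0\,ds + a_0$ with $a_0$ determined variationally, and only afterwards verifies that this $A_0$ solves (\ref{12}). This avoids the need for any smallness or contraction in the $A_0$-step. Relatedly, the paper's wave equation for $A_j$ is not (\ref{6}) but the projected form (\ref{14}), in which the $-\partial_j\partial_t A_0$ term has been \emph{removed} by the Leray projection $P$; you should not be estimating that term directly.

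Second, and more importantly, the paper does \emph{not} prove the multilinear estimates directly for all $1<r\le 2$ as you suggest. It establishes them at the two endpoints---$r=1+$ via Foschi--Klainerman \cite{FK} (e.g.\ Lemma~\ref{Lemma5.1}, Corollary~\ref{Lemma2}, Lemma~\ref{Lemma3*}) and $r=2$ via the d'Ancona--Foschi--Selberg catalogue Proposition~\ref{Prop.7.1} (Lemmas~\ref{Lemma1''}, \ref{Lemma3''}, \ref{Lemma4''})---and then \emph{bilinearly/trilinearly interpolates} (Lemmas~\ref{Lemma1'}, \ref{Lemma3a}, \ref{Lemma5'}, \ref{Lemma4'}, \ref{Lemma8'9'}). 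The linear-in-$1/r$ thresholds in the theorem are exactly the interpolated conditions; a direct dyadic argument for general $r$ along the lines you describe would be considerably more work and it is not clear it would recover the same numerology. A related subtlety you do not mention is that $A_j$ is placed in $X^r_{l,b',\pm}$ with $b'=\tfrac12+\tfrac{1}{2r}+$, strictly larger than $\tfrac1r+$; this extra quarter-derivative in modulation (visible already at $r=2$ in Lemma~\ref{Lemma1''}) is what allows the condition $l>\tfrac14$ rather than $l>\tfrac12$.
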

\noindent{\bf Remarks:} 1.  In the case $r=2$ we obtain $s-1 \le l \le s$ , $ s
> \half$ , $l> \frac{1}{4}$ , $2s-l > \frac{1}{4}$, $2l-s > - \frac{1}{4}$ .
This is $\half$ away from the critical value $s_c =0$ . It slightly improves the
result of \cite{CP}. \\
2. In the case $r>1$ , but close to $1$ , the conditions are $ s >
\frac{21}{16}$ , $l> \frac{9}{8}$ , $2s-l >\frac{3}{2}$, $2l-s >\frac{3}{4}$ ,
whereas $s_c \to 1$ as $r \to 1$ . The gap shrinks to $\frac{5}{16}$ . \\
3. It is not at all clear, whether in dimension $n=2$ the gap disappears as in
the case $n=3$ (cf. \cite{P1}) . \\[0.3em]

Now the most important observation was (cf. e.g. \cite{S}) that this system can
be rewritten in the following form, which involves null forms.

\begin{align}
\label{12}
\Delta A_0 &= -Im(\phi \overline{\partial_t \phi}) + |\phi|^2 A_0 \, , \\
\label{13} 
\Delta \partial_t A_0 & = -\partial^j\, Im(\phi \overline{\partial_j \phi}) +
\partial^j (|\phi|^2 A_j) \, , \\
\label{14}
\Box A_j  & = 2 R^k D^{-1} Q_{jk}( Re \, \phi,Im \, \phi) + P(|\phi|^2 A_j) =:
N_j(A_j,\phi) \, , \\
\nonumber
\Box \phi & = -i Q_{jk}(\phi,D^{-1}(R^j A^k-R^k A^j)) + 2i A_0 \partial_t \phi +
i (\partial_t A_0)\phi + A^{\mu}A_{\mu} \phi + m^2 \phi \\
\label{15}
& =: M(A,\phi) \, , \\
\nonumber
\partial^j A_j & =0 \, ,
\end{align}
where $R_k = D^{-1} \partial_k$ is the Riesz transform, $P$ denotes the
projection onto the divergence-free vector fields given by
$PX_j = R^k(R_j X_k - R_k X_j) $
and 
\begin{equation}
\label{q}
Q_{jk}(u,v) = \partial_j u \partial_k v - \partial_k u \partial_j v 
\end{equation}
denotes the null form.

In the classical case $r=2$ we follow the arguments by Czubak-Pikula \cite{CP} ,
especially when handling the elliptic equations. The necessary bilinear
estimates for the null form are by use of Klainerman-Machedon \cite{KM} reduced
to standard bilinear estimates which are contained in the very convenient paper
\cite{AFS}.  The cubic estimates are obtained by using bilinear estimates twice.
In the general case $1 < r \le 2$ , especially for $r$ close to $1$, we handle
the null forms by using estimates from \cite{FK}, as already Gr\"unrock \cite{G}
and Grigoryan-Nahmid \cite{GN} did before. It is also essential to use a result
by Grigoryan-Tanguay \cite{GT} for bilinear estimates in $H^r_{s,b}$-spaces.
Finally, in order to optimize the result in the general case $1<r \le 2$ we
interpolate between the extreme cases $r=1+$ and $r=2$ .

In Lorenz gauge our main result is given in the
following theorem.
\begin{theorem}
\label{Theorem2}
Let $1 < r \le 2$ . Assume that $s-1 \le l \le s$ and $s>
\frac{13}{8r}-\frac{5}{16}$, $ l > \frac{7}{4r} - \frac{5}{8}$ , $2s-l >
\frac{3}{2r}$ , $2l-s >\frac{2}{r} - \frac{5}{4}$ . Let initial data be given
such that 
$A_{\mu}(0)=a_{\mu} \in \widehat{H}^{l,r} \, , \, (\partial A_j)(0) = b_{\mu}
\in \widehat{H}^{l-1,r} \, , \, \phi(0) = \phi_0 \in \widehat{H}^{s,r} \, , \,
(\partial_t \phi)(0) = \phi_0 \in \widehat{H}^{s-1,r}$ , which fulfill the compatibility condition $b_0 =
\partial^j a_j$ . Then there exist $T>0$, $b>\frac{1}{r}$, $b' >
\half+\frac{1}{2r}$ , such that the problem (\ref{1}),(\ref{2}) in Lorenz gauge  $\partial^{\mu} A_{\mu} =0$ has a unique
solution
$$ \phi \in X^r_{s,b,+}[0,T] + X^r_{s,b,-}[0,T] \, , \, A_{\mu} \in
X^r_{l,b',+}[0,T] + X^r_{l,b',-}[0,T] \, . $$
This solution satisfies
$$ \phi \in C^0([0,T],\widehat{H}^{s,r}) \cap C^1([0,T],\widehat{H}^{s-1,r}) \,
, \, A_{\mu} \in C^0([0,T],\widehat{H}^{l,r}) \cap
C^1([0,T],\widehat{H}^{l-1,r}) \, . $$
\end{theorem}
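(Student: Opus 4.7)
My plan is to recast the Maxwell--Klein--Gordon system in Lorenz gauge and run a Picard iteration in the Fourier--Lebesgue adapted Bourgain spaces $X^r_{s,b,\pm}$ for $\phi$ and $X^r_{l,b',\pm}$ for $A_\mu$. Using $\partial^\mu A_\mu=0$ in (\ref{1})--(\ref{2}), the equation for $\phi$ becomes
\begin{equation*}
\Box \phi \;=\; -2i A^\mu \partial_\mu \phi \,+\, A^\mu A_\mu \phi \,+\, m^2 \phi,
\end{equation*}
while $\Box A_\mu = -j_\mu = -Im(\phi\,\overline{\partial_\mu \phi}) - |\phi|^2 A_\mu$. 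The exponent $b>\tfrac{1}{r}$ is dictated by the transfer principle that embeds $X^r_{s,b,\pm}\hookrightarrow C^0_t \widehat{H}^{s,r}_x$, and $b'>\tfrac12+\tfrac{1}{2r}$ is chosen strictly larger because the quadratic source of the $A_\mu$ equation has no null structure and needs a stronger $b$-weight to gain the missing derivative.

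I would then split $A_\mu = A_\mu^{hom}+A_\mu^{inh}$, where $A_\mu^{hom}$ solves the free wave equation with data $(a_\mu,b_\mu)$ and $A_\mu^{inh}=-\Box^{-1}j_\mu$ has zero Cauchy data. Following Selberg--Tesfahun \cite{ST} and \cite{P}, the Lorenz condition $\partial^\mu A_\mu^{inh}=0$ (modulo controlled corrections) lets one rewrite $A^{inh,\mu}\partial_\mu \phi$ via a $Q_{\mu\nu}$-type null form of $\phi$ and a potential in divergence form, whereas $A^{hom,\mu}\partial_\mu \phi$ is a product of a free wave and an $X^r_{s,b,\pm}$ function and is handled directly. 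The cubic terms $A^\mu A_\mu \phi$ and $|\phi|^2 A_\mu$ are broken into two successive bilinear estimates.

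The heart of the argument is a package of bilinear and trilinear estimates in the Fourier--Lebesgue setting. For $r=2$ one recovers the $H^{s,b}$-bilinear estimates of Czubak--Pikula \cite{CP} via \cite{AFS}; for $r$ near $1$, I would use the Foschi--Klainerman estimates as adapted to Fourier--Lebesgue spaces by Grigoryan--Nahmod \cite{GN} to control the null forms, together with the Grigoryan--Tanguay \cite{GT} bilinear estimates in $H^r_{s,b}$ for the non-null pieces and the cubic terms. Interpolating between $r=1+$ and $r=2$ produces exactly the admissible region $s>\tfrac{13}{8r}-\tfrac{5}{16}$, $l>\tfrac{7}{4r}-\tfrac58$, $2s-l>\tfrac{3}{2r}$, $2l-s>\tfrac{2}{r}-\tfrac54$, matching the Coulomb-gauge Theorem \ref{Theorem1}. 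Once these multilinear estimates are in place, a standard contraction on a small ball of $X^r_{s,b,\pm}[0,T]\times X^r_{l,b',\pm}[0,T]$ yields existence, uniqueness, continuous dependence and persistence of higher regularity; the $C^0_t$ statements follow from the embedding above.

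The main obstacle I foresee is closing the iteration for $A_\mu$: since $Im(\phi\,\overline{\partial_\mu \phi})$ carries no null structure in Lorenz gauge, its $X^r_{l,b'-1+,\pm}$-bound must be extracted from plain bilinear dispersive estimates and is the tightest constraint on the $(s,l)$ region. Calibrating the asymmetric pair $(b,b')$ so that all bilinear estimates for the $\phi$-nonlinearity (null form, $A_0\partial_t\phi$, $(\partial_t A_0)\phi$, $A^\mu A_\mu\phi$) and all bilinear and trilinear estimates for the $A_\mu$-nonlinearity close simultaneously, and then optimizing over the interpolation between $r=1+$ and $r=2$, is the delicate bookkeeping that drives the precise thresholds in the theorem.
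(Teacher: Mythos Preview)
Your global architecture---fixed point via Theorem~\ref{Theorem0.3} in $X^r_{s,b,\pm}\times X^r_{l,b',\pm}$, null-form estimates near $r=1$ via Foschi--Klainerman/Grigoryan--Nahmod, $r=2$ estimates via \cite{AFS}, Grigoryan--Tanguay for the non-null bilinear pieces, and interpolation between the endpoints---matches the paper, as does your identification of $Im(\phi\,\overline{\partial\phi})$ as the structureless term that forces the choice $b'>\tfrac12+\tfrac{1}{2r}$.

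There is, however, a genuine gap in how you extract null structure from $A^\mu\partial_\mu\phi$. The homogeneous/inhomogeneous split $A=A^{hom}+A^{inh}$ is \emph{not} what produces the null forms here, and the assertion that ``$A^{hom,\mu}\partial_\mu\phi$ is a product of a free wave and an $X^r_{s,b,\pm}$ function and is handled directly'' does not close: $A^{hom}$ has only $l$ derivatives of spatial regularity, so $A^{hom}\,\partial\phi$ is still one derivative short regardless of how large $b'$ is on the free piece. The paper (following Selberg--Tesfahun) instead decomposes the \emph{spatial} potential algebraically,
\[
\mathbf A=\mathbf A^{\text{df}}+\mathbf A^{\text{cf}}+\Lambda^{-2}\mathbf A,
\]
which applies to the full $A$ (homogeneous and inhomogeneous alike). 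The divergence-free part gives $\mathbf A^{\text{df}}\cdot\nabla\phi=-Q_{12}(\Lambda^{-1}(R_1A_2-R_2A_1),\phi)$, while the curl-free part is combined with the temporal term via the Lorenz condition $\partial_tA_0=\nabla\cdot\mathbf A$ to produce a \emph{second} null form,
\[
-A_0\partial_t\phi+\mathbf A^{\text{cf}}\cdot\nabla\phi \;=\; Q_{i0}(\Lambda^{-1}R^iA_0,\phi)\;-\;\Lambda^{-2}A_0\,\partial_t\phi .
\]
Both $Q_{12}$ and $Q_{i0}$ must be estimated (Lemmas~\ref{Lemma1'} and~\ref{Lemma2''}); the $Q_{i0}$ piece is precisely where the enhanced exponent $b'>\tfrac12+\tfrac{1}{2r}$ on $A$ is spent. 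Your outline omits the $Q_{i0}$ form entirely and instead lists the Coulomb-gauge terms ``$A_0\partial_t\phi$, $(\partial_t A_0)\phi$'', which do not appear separately in the Lorenz-gauge $\phi$-equation and whose elliptic treatment from Section~3 is unavailable here since $A_0$ is now a wave variable. Replace the hom/inh split by the divergence-free/curl-free decomposition and add the $Q_{i0}$ estimate; the rest of your plan then goes through as in the paper.
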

\begin{Cor}
In the classical case $r=2$ assume $s-1 \le l \le s+1$ , $ s > \half$ ,
$l>\frac{1}{4}$ , $2s-l >\frac{3}{4}$ , $2l-s > -\frac{1}{4}$ . Then the system
(\ref{1}),(\ref{2}) in Lorenz gauge  $\partial^{\mu} A_{\mu} =0$  with data $a_{\mu} \in H^l $ , $b_{\mu} \in H^{l-1}$ , $
\phi_0 \in H^s$ , $\phi_1 \in H^{s-1}$  , which fulfill $b_0 = \partial^j a_j$ ,
has a unique local solution
$$\phi \in X^{s,b}_+[0,T] + X^{s,b}_-[0,T] \, , \, A_{\mu} \in X^{l,b'}_+[0,T] +
X^{l,b'}_-[0,T] \, ,$$ 
 where $b > \half$ , $b' > \frac{3}{4}$ .
The solution satisfies
$$\phi \in C^0([0,T],H^s) \cap C^1([0,T],H^{s-1}) \, , \, A_{\mu} \in
C^0([0,T],H^l) \cap C^1([0,T],H^{l-1}) \, . $$
\end{Cor}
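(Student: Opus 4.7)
The corollary is the specialization of Theorem~\ref{Theorem2} to the classical case $r=2$, so my plan would be to run the same contraction argument with every Fourier--Lebesgue norm replaced throughout by the corresponding classical $X^{s,b}_\pm$-norm. With $r=2$ the four inequalities $s>\tfrac{13}{8r}-\tfrac{5}{16}$, $l>\tfrac{7}{4r}-\tfrac{5}{8}$, $2s-l>\tfrac{3}{2r}$, $2l-s>\tfrac{2}{r}-\tfrac{5}{4}$ collapse exactly to $s>\tfrac{1}{2}$, $l>\tfrac{1}{4}$, $2s-l>\tfrac{3}{4}$, $2l-s>-\tfrac{1}{4}$, and likewise $b>\tfrac{1}{r}$ becomes $b>\tfrac{1}{2}$ while $b'>\tfrac{1}{2}+\tfrac{1}{2r}$ becomes $b'>\tfrac{3}{4}$, so the numerology matches verbatim.

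After imposing $\partial^{\mu} A_{\mu}=0$ and using the compatibility $b_0=\partial^j a_j$ on the data, the system (\ref{1})--(\ref{2}) becomes
\begin{align*}
\Box \phi &= -2iA^{\mu}\partial_{\mu}\phi + A^{\mu}A_{\mu}\phi + m^2\phi,\\
\Box A_{\mu} &= -Im(\phi\,\overline{\partial_{\mu}\phi}) - |\phi|^2 A_{\mu}.
\end{align*}
Following Selberg--Tesfahun \cite{ST}, I would split $A_j$ into its divergence-free and curl-free parts, so that the coupling of the divergence-free piece to $\phi$ can be rewritten in terms of the null form $Q_{jk}$ of (\ref{q}), whereas the curl-free piece is recovered from the Lorenz constraint $\partial_t A_0 = \partial^j A_j$ together with the elliptic-type equation for $A_0$. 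The surviving term $A_0\partial_t\phi$ carries no hidden null structure, which forces one to pay an extra quarter derivative, i.e. to take $b'>\tfrac{3}{4}$ rather than $b'>\tfrac{1}{2}+$.

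The Picard iteration in $(X^{s,b}_{\pm}\cap X^{l,b'}_{\pm})[0,T]$ then reduces to three families of multilinear wave-Sobolev estimates: (i) $\|A^{\mu}\partial_{\mu}\phi\|_{X^{s,b-1}_{\pm}}\lesssim \|A\|_{X^{l,b'}_{\pm}}\|\phi\|_{X^{s,b}_{\pm}}$, with the null-form component handled by Klainerman--Machedon reduction to the bilinear catalogue of \cite{AFS}; (ii) the quadratic bound $\|Im(\phi\,\overline{\partial_{\mu}\phi})\|_{X^{l,b'-1}_{\pm}}\lesssim \|\phi\|_{X^{s,b}_{\pm}}^2$; and (iii) the cubic bounds $\|A^{\mu}A_{\mu}\phi\|_{X^{s,b-1}_{\pm}}$ and $\||\phi|^2 A_{\mu}\|_{X^{l,b'-1}_{\pm}}$, obtained by iterating bilinear estimates. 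Each reduces to one or more of the four inequalities on $(s,l)$, which is precisely how the stated parameter window arises. The small enlargement of the upper bound from $l\le s$ to $l\le s+1$ is a persistence-of-regularity statement that follows by re-running estimates (ii) and (iii) with the higher-$l$ norm on the left-hand side, the extra derivatives being absorbed by $2s-l>\tfrac{3}{4}$.

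The main obstacle will be estimate (i) at the endpoint $s=\tfrac{1}{2}+$, $l=\tfrac{1}{4}+$, where the $A_0\partial_t\phi$-contribution must be absorbed using only the modulation gain $b'-\tfrac{1}{2}=\tfrac{1}{4}+$ together with $2s-l>\tfrac{3}{4}$; this is what dictates the asymmetric choice of $b'$ versus $b$. Once these estimates are in hand, standard Bourgain-space theory yields a contraction on a short interval $[0,T]$, the embedding $X^{s,b}_{\pm}[0,T]\hookrightarrow C^0([0,T],H^s)$ (valid for $b>\tfrac{1}{2}$) promotes the solution to the stated continuity class, and uniqueness follows from the same bilinear estimates applied to the difference of two solutions.
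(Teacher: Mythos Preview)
Your overall plan --- specialize Theorem~\ref{Theorem2} to $r=2$ and verify that the numerical thresholds collapse to the stated ones --- is correct and is exactly how the paper obtains the corollary. However, two points in your sketch of the underlying argument misrepresent what actually happens in Lorenz gauge and would not work as written.

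First, there is no ``elliptic-type equation for $A_0$'' in Lorenz gauge; that belongs to the Coulomb gauge treatment. Here $A_0$ satisfies a wave equation and is put in $X^{l,b'}_{\pm}$ on the same footing as the spatial components. Second, and more importantly, your claim that ``the surviving term $A_0\partial_t\phi$ carries no hidden null structure'' is the opposite of what the paper exploits. Following \cite{ST}, one uses the Lorenz condition $\partial_t A_0=\partial^j A_j$ to combine $-A_0\partial_t\phi$ with the curl-free contribution $\mathbf A^{\mathrm{cf}}\cdot\nabla\phi$, and this sum is rewritten as the null form $Q_{i0}(\Lambda^{-1}R^iA_0,\phi)$ plus the harmless remainder $-\Lambda^{-2}A_0\partial_t\phi$ (see (\ref{A})). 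It is precisely this $Q_{i0}$ structure that makes $s>\tfrac12$, $l>\tfrac14$ accessible. The term that genuinely lacks null structure is $Im(\phi\,\overline{\partial\phi})$ on the right-hand side of the $A$-equation, not $A_0\partial_t\phi$.

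Correspondingly, your explanation for why $b'>\tfrac34$ is needed is inverted. Taking $b'>\tfrac34$ makes the target space $H^{l-1,b'-1+}$ for $N(A,\phi)$ \emph{harder}, not easier; one accepts this cost because the gain appears on the other side, in the $M(A,\phi)$ estimate for the $\phi$-equation. There the null-form bounds for $Q_{12}$ and $Q_{0j}$ (Lemmas~\ref{Lemma1''} and \ref{Lemma2'}) place $A$ in $X^{l,\frac34+}_{\pm}$, and the extra quarter of hyperbolic regularity is exactly what lowers the threshold on $l$ from $l>\tfrac12$ to $l>\tfrac14$ (see the remark inside the proof of Lemma~\ref{Lemma1''}). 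If you tried to estimate a bare $A_0\partial_t\phi$ without the $Q_{i0}$ structure, the choice $b'>\tfrac34$ alone would not rescue you at $l=\tfrac14+$.
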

{\bf Remarks:}
1. The result of this Corollary improves the former result of the author
\cite{P} from $s> \frac{3}{4}$ to $s> \half$ . This is $\half$ away 
from the scaling critical exponent $s_c =0$ .\\
2. In the case $r=1+$ we have to assume in Theorem \ref{Theorem2} the conditions
$s > \frac{21}{16}$ , $l > \frac{9}{8}$ , $2s-l > \frac{3}{4}$ , $2l-s > -
\frac{1}{4}$ and $s-1 \le l  \le s+1$ . 
Similarly  as in Coulomb gauge this reduces the gap to the scaling critical
exponent $s_c$ , which converges to $1$ as $r \to 1$ . \\[0.3em]

We start by reformulating the system (\ref{1}),(\ref{2}) in Lorenz gauge
\begin{equation}
\label{8'}\partial^{\mu} A_{\mu} =0
\end{equation}
 as follows:
$$\square A_{\mu} = \partial^{\nu} \partial_{\nu} A_{\mu} =
\partial^{\nu}(\partial^{\mu} A_{\nu} - F_{\mu \nu}) = -\partial^{\nu} F_{\mu
\nu} = - j_{\mu} \, , $$
thus (using the notation $\partial = (\partial_0,\partial_1,...,\partial_n)$):
\begin{equation}
\label{6'}
\square A = -Im (\phi \overline{\partial \phi}) - A|\phi|^2 =: N(A,\phi) 
\end{equation}
and
\begin{align*}
m^2 \phi & = D_{\mu} D^{\mu} \phi = \partial_{\mu} \partial^{\mu} \phi -iA_{\mu}
\partial^{\mu} \phi -i\partial_{\mu}(A^{\mu} \phi) - A_{\mu}A^{\mu} \phi \\
& = \square \phi - 2i A^{\mu} \partial_{\mu} \phi - A_{\mu} A^{\mu} \phi \,
\end{align*}
thus
\begin{equation}
\label{7'}
(\square -m^2) \phi = 2i A^{\mu} \partial_{\mu} \phi + A_{\mu} A^{\mu} \phi =:
M(A,\phi) \, .
\end{equation}

In order to prove the theorem we start by recalling the well-known fact that the
term $A^{\mu}  \partial_{\mu}\phi$ has null structure whereas the term $Im(\phi
\overline{\partial \phi})$ seems to fulfill no null condition. The null forms
are then handled similarly as in Coulomb gauge as well as the cubic terms.

\section{Preliminaries}

We start by collecting some fundamental properties of the solution spaces. We
rely on \cite{G}. The spaces $X^r_{s,b,\pm} $ with norm  $$
\|\phi\|_{X^r_{s,b\pm}} := \| \langle \xi \rangle^s \langle \tau \pm |\xi|
\rangle^b \tilde{\phi}(\tau,\xi)\|_{L^{r'}_{\tau \xi}} $$ for $1<r<\infty$ are
Banach spaces with ${\mathcal S}$ as a dense subspace. The dual space is
$X^{r'}_{-s,-b,\pm}$ , where $\frac{1}{r} + \frac{1}{r'} = 1$. The complex
interpolation space is given by
$$(X^{r_0}_{s_0,b_0,\pm} , X^{r_1}_{s_1,b_1,\pm})_{[\theta]} = X^r_{s,b,\pm} \,
, $$
where $s=(1-\theta)s_0+\theta s_1$, $\frac{1}{r} = \frac{1-\theta}{r_0} +
\frac{\theta}{r_1}$ , $b=(1-\theta)b_0 + \theta b_1$ . Similar properties has
the space $H^r_{s,b}$ .\\
If $u=u_++u_-$, where $u_{\pm} \in X^r_{s,b,\pm} [0,T]$ , then $u \in
C^0([0,T],\hat{H}^{s,r})$ , if $b > \frac{1}{r}$ .

The "transfer principle" in the following proposition, which is well-known in
the case $r=2$, also applies for general $1<r<\infty$ (cf. \cite{GN}, Prop. A.2
or \cite{G}, Lemma 1). We denote $ \|u\|_{\hat{L}^p_t(\hat{L}^q_x)} :=
\|\tilde{u}\|_{L^{p'}_{\tau} (L^{q'}_{\xi})}$ .
\begin{prop}
\label{Prop.0.1}
Let $1 \le p,q \le \infty$ .
Assume that $T$ is a bilinear operator which fulfills
$$ \|T(e^{\pm_1 itD} f_1, e^{\pm_2itD} f_2)\|_{\hat{L}^p_t(\hat{L}^q_x)}
\lesssim \|f_1\|_{\hat{H}^{s_1,r}} \|f_2\|_{\hat{H}^{s_2,r}}$$
for all combinations of signs $\pm_1,\pm_2$ , then for $b > \frac{1}{r}$ the
following estimate applies:
$$ \|T(u_1,u_2)\|_{\hat{L}^p_t(\hat{L}^q_x)} \lesssim \|u_1\|_{H^r_{s_1,b}} 
\|u_2\|_{H^r_{s_2,b}} \, . $$
\end{prop}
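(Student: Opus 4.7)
The plan is to represent each input $u_j$ as a continuous superposition of modulated free waves of the form $e^{\pm_j itD}g$, transfer the hypothesized free-wave bilinear bound term by term via Minkowski's integral inequality together with the modulation-invariance of the $\hat L^p_t$-norm, and then control the resulting $\sigma$-integrals of $\widehat H^{s_j,r}$-norms by $H^r_{s_j,b}$-norms via H\"older in $\sigma$ with weight $\langle\sigma\rangle^{-b}$, which lies in $L^r_\sigma$ exactly when $b > 1/r$.

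First, I split $u_j = u_{j,+} + u_{j,-}$ according to $\mathrm{sign}(\tau)$, i.e.\ $\tilde u_{j,\pm} := \chi_{\{\pm\tau > 0\}}\tilde u_j$. On $\{\pm\tau > 0\}$ one has $|\tau| = \pm\tau$, whence
$$\|u_{j,+}\|_{H^r_{s_j,b}} = \|u_{j,+}\|_{X^r_{s_j,b,-}}, \qquad \|u_{j,-}\|_{H^r_{s_j,b}} = \|u_{j,-}\|_{X^r_{s_j,b,+}},$$
and the two pieces sum (with disjoint supports) to $\|u_j\|_{H^r_{s_j,b}}$. For the $+$-piece the change of variables $\sigma = \tau - |\xi|$ in the Fourier inversion formula yields the representation
$$u_{j,+}(t,x) = \int_\R e^{it\sigma}\bigl(e^{itD} g^{+,j}_\sigma\bigr)(x)\,d\sigma, \qquad \widehat{g^{+,j}_\sigma}(\xi) = \tilde u_{j,+}(\sigma+|\xi|,\xi),$$
and analogously $u_{j,-}$ is represented via $\sigma = \tau + |\xi|$ and the $-$ free wave $e^{-itD}$.

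Expanding $T(u_1,u_2)$ by bilinearity into four terms $T(u_{1,\epsilon_1},u_{2,\epsilon_2})$ and inserting these representations, Minkowski's integral inequality pulls the double $\sigma$-integral out of $\hat L^p_t(\hat L^q_x)$. The scalar phase $e^{it(\sigma_1+\sigma_2)}$ only translates the temporal Fourier transform and so preserves the $\hat L^p_t$-norm, leaving
$$\|T(u_{1,\epsilon_1},u_{2,\epsilon_2})\|_{\hat L^p_t(\hat L^q_x)} \le \iint \|T(e^{\epsilon_1 itD}g^{\epsilon_1,1}_{\sigma_1},e^{\epsilon_2 itD}g^{\epsilon_2,2}_{\sigma_2})\|_{\hat L^p_t(\hat L^q_x)}\,d\sigma_1\,d\sigma_2.$$
The hypothesis reduces the right-hand side to a product of two integrals of the form $\int \|g^{\epsilon,j}_\sigma\|_{\widehat H^{s_j,r}}\,d\sigma$. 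Writing $1 = \langle\sigma\rangle^{-b}\langle\sigma\rangle^{b}$ and applying H\"older in $\sigma$ with exponents $r,r'$, followed by Fubini, gives
$$\int \|g^{\epsilon,j}_\sigma\|_{\widehat H^{s_j,r}}\,d\sigma \le \|\langle\sigma\rangle^{-b}\|_{L^r_\sigma}\,\bigl\|\langle\sigma\rangle^b\langle\xi\rangle^{s_j}\widehat{g^{\epsilon,j}_\sigma}(\xi)\bigr\|_{L^{r'}_{\sigma,\xi}},$$
and reversing the change of variable back to $\tau$ identifies the last factor with $\|u_{j,\epsilon}\|_{X^r_{s_j,b,\mp}} = \|u_{j,\epsilon}\|_{H^r_{s_j,b}}$. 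Summing over the four sign combinations $(\epsilon_1,\epsilon_2)$ concludes the proof.

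The only subtle point is the mismatch between the weight $\langle|\tau|-|\xi|\rangle$ defining $H^r_{s,b}$ and the signed weights $\langle\tau\pm|\xi|\rangle$ of $X^r_{s,b,\pm}$: the inequality $\langle|\tau|-|\xi|\rangle \le \langle\tau\pm|\xi|\rangle$ holds but not its reverse, so a single free-wave representation cannot recover the full $H^r_{s,b}$-norm. Splitting by $\mathrm{sign}(\tau)$ and using the appropriate $\pm$-representation on each half circumvents this issue, after which only routine Minkowski/H\"older/Fubini manipulations remain, and the threshold $b > 1/r$ enters solely through the finiteness of $\|\langle\sigma\rangle^{-b}\|_{L^r_\sigma}$.
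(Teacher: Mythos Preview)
The paper does not supply its own proof of this proposition but merely cites \cite{GN}, Prop.~A.2 and \cite{G}, Lemma~1. Your argument is exactly the standard one found in those references: represent each $u_j$ as a $\sigma$-superposition of time-modulated free waves, pull the $\sigma$-integrals outside the $\hat L^p_t(\hat L^q_x)$-norm via Minkowski, apply the free-wave hypothesis, and bound the remaining integrals $\int \|g^{\epsilon,j}_\sigma\|_{\widehat H^{s_j,r}}\,d\sigma$ by H\"older with weight $\langle\sigma\rangle^{-b}\in L^r_\sigma$, which is precisely where the threshold $b>1/r$ enters. Your sign-of-$\tau$ splitting to reconcile the $H^r_{s,b}$ weight $\langle|\tau|-|\xi|\rangle$ with the signed weights $\langle\tau\pm|\xi|\rangle$ is correct and is the usual device here.

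One implicit assumption worth making explicit: when you write that the scalar phase $e^{it(\sigma_1+\sigma_2)}$ merely translates the temporal Fourier transform, you are using that $T$ commutes with multiplication by functions of $t$ alone, i.e.\ that $T$ acts only spatially at each fixed time. This holds for every application in the paper (spatial bilinear multipliers such as $q_{12}$, $q_{0j}^\pm$, $B^b_\pm$), and is also the tacit hypothesis in the cited references, but it is not literally contained in the phrase ``bilinear operator'' in the statement.
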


The general local well-posedness theorem is the following (obvious
generalization of)  \cite{G}, Thm. 1.
\begin{theorem}
\label{Theorem0.3}
Let $N_{\pm}(u,v):=N_{\pm}(u_+,u_-,v_+,v_-)$ and
$M_{\pm}(u,v):=M_{\pm}(u_+,u_-,\\v_+,v_-)$ be multilinear functions.
Assume that for given $s,l \in \R$, $1 < r < \infty$ there exist $ b,b' >
\frac{1}{r}$ such that the estimates
$$ \|N_{\pm}(u,v)\|_{X^r_{s,b-1+,\pm}} \le \omega_1(
\|u\|_{X^r_{s,b}},\|v\|_{X^r_{l,a}}) $$
and 
$$\|M_{\pm}(u,v)\|_{X^r_{l,b'-1+,\pm}} \le \omega_2(
\|u\|_{X^r_{s,b}},\|v\|_{X^r_{l,b'}}) $$
are valid with nondecreasing functions $\omega_j$ , where $\|u\|_{X^r_{s,b}} :=
\|u_-\|_{X^r_{s,b,-}} + \|u_+\|_{X^r_{s,b,+}}$. Then there exist $T=T(\|u_{0_
{\pm}}\|_{\hat{H}^{s,r}},\|v_{0_{\pm}}\|_{\hat{H}^{l,r}})$ $>0$ and a unique
solution $(u_+,u_-,\\v_+,v_-) \in X^r_{s,b,+}[0,T] \times X^r_{s,b,-}[0,T]
\times X^r_{l,b',+}[0,T] \times X^r_{l,b',-}[0,T] $ of the Cauchy problem
$$ \partial_t u_{\pm} \pm i\Lambda u = N_{\pm}(u,v) \quad , \quad \partial_t
v_{\pm} \pm i\Lambda v = M_{\pm}(u,v) $$ $$         u_{\pm}(0) = u_{0_{\pm}} \in
\hat{H}^{s,r} \quad , \quad v_{\pm}(0) = v_{0_{\pm}} \in \hat{H}^{l,r}       \,
. $$
 This solution is persistent and the mapping data upon solution
$(u_{0+},u_{0-},v_{0+},v_{0-}) \\ \mapsto (u_+,u_-,v_+,v_-)$ , $\hat{H}^{s,r}
\times \hat{H}^{s,r}\times \hat{H}^{l,r} \times \hat{H}^{l,r} \to
X^r_{s,b,+}[0,T_0] \times X^r_{s,b,-}[0,T_0] \times X^r_{l,b',+}[0,T_0]\times
X^r_{l,b',-}[0,T_0] $ is locally Lipschitz continuous for any $T_0 < T$.
\end{theorem}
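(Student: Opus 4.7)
The plan is to reduce the Cauchy problem to a fixed-point equation via Duhamel's formula and solve it by Banach contraction on a closed ball of the product space
\begin{equation*}
Y_T := X^r_{s,b,+}[0,T] \times X^r_{s,b,-}[0,T] \times X^r_{l,b',+}[0,T] \times X^r_{l,b',-}[0,T].
\end{equation*}
The equation $\partial_t u_\pm \pm i\Lambda u_\pm = N_\pm(u,v)$ with $u_\pm(0)=u_{0\pm}$ is equivalent to
\begin{equation*}
u_\pm(t) = e^{\mp it\Lambda} u_{0\pm} + \int_0^t e^{\mp i(t-s)\Lambda} N_\pm(u,v)(s)\,ds,
\end{equation*}
and similarly for $v_\pm$ with $M_\pm$ in place of $N_\pm$. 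I would extend the right-hand sides to all of $\R$ by multiplying the homogeneous term with a fixed smooth cutoff $\psi\in C_c^\infty(\R)$ equal to $1$ on $[-1,1]$ and the Duhamel term with $\psi(t/T)$, so that the contraction can be carried out on the full-line spaces $X^r_{s,b,\pm}$ and then restricted to $[0,T]$.

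The two linear ingredients are the Fourier-Lebesgue analogues, due to Gr\"unrock \cite{G}, of the classical Bourgain estimates: for $b>\tfrac{1}{r}$,
\begin{equation*}
\|\psi(t)\, e^{\mp it\Lambda} u_{0\pm}\|_{X^r_{s,b,\pm}} \lesssim \|u_{0\pm}\|_{\widehat{H}^{s,r}},
\end{equation*}
and, for $0<T\le 1$ and some $\theta=\theta(r,b)>0$,
\begin{equation*}
\Bigl\| \psi(t/T) \int_0^t e^{\mp i(t-s)\Lambda} F(s)\,ds \Bigr\|_{X^r_{s,b,\pm}} \lesssim T^{\theta}\, \|F\|_{X^r_{s,b-1+,\pm}}.
\end{equation*}
The positive power of $T$ is the smallness that will drive the contraction; it is available precisely because the hypothesis tolerates an arbitrarily small loss $\epsilon$ in the $b$-weight on the inhomogeneity side. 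Analogous estimates hold at the level $(l,b')$ for $v_\pm$.

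Combining the linear estimates with the assumed bounds $\|N_\pm(u,v)\|_{X^r_{s,b-1+,\pm}} \le \omega_1(\|u\|_{X^r_{s,b}},\|v\|_{X^r_{l,b'}})$ and the analogue for $M_\pm$ shows that the Duhamel map $\Phi$ sends the closed ball of radius $R\approx \|u_{0\pm}\|_{\widehat{H}^{s,r}}+\|v_{0\pm}\|_{\widehat{H}^{l,r}}$ in $Y_T$ into itself, provided $T^\theta(\omega_1(R,R)+\omega_2(R,R))$ is small, which is achieved by shrinking $T$ depending on the data. For the contraction I use multilinearity: each difference $N_\pm(u,v)-N_\pm(\tilde u,\tilde v)$ splits into a finite sum of multilinear terms in which exactly one argument is $u-\tilde u$ or $v-\tilde v$; each summand obeys the same bilinear estimate and therefore inherits the $T^\theta$ gain, so $\Phi$ is a strict contraction. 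The Banach fixed-point theorem then gives the unique solution together with the Lipschitz dependence on the data; persistence follows from the embedding $X^r_{s,b,\pm}[0,T]\hookrightarrow C^0([0,T],\widehat{H}^{s,r})$ for $b>\tfrac{1}{r}$ recalled before Proposition \ref{Prop.0.1}. The only non-routine element is the $T^\theta$-gaining Duhamel estimate, whose verification in the Fourier-Lebesgue setting is an exercise in one-dimensional Sobolev/H\"older estimates in $\tau$ carried out in \cite{G}; everything else is formal and depends solely on the abstract multilinear hypotheses, which is why this framework is so convenient for applying it later to the specific $N_\pm, M_\pm$ arising from the Coulomb and Lorenz gauges.
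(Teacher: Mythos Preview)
Your proposal is correct and matches the paper's treatment: the paper does not give an independent proof but simply records the statement as an ``obvious generalization of \cite{G}, Thm.~1,'' and the argument you sketch---Duhamel formulation, Gr\"unrock's linear $X^r_{s,b,\pm}$ estimates with the $T^\theta$ gain from the slack $b-1+$, and contraction exploiting multilinearity---is exactly that argument.
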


\section{MKG in Coulomb gauge}
In a standard way we rewrite the system (\ref{14}),(\ref{15}) as a first order
(in t) system. Defining
$A_{j,\pm} = \half(A_j \pm (i\Lambda)^{-1} \partial_t A_j) \, , \, \phi_{\pm} =
\half(\phi \pm (i \Lambda)^{-1}\partial_t \phi) $ , so that
$A_j=A_{j,+}+A_{j,-}$ , $\partial_t A_j= i \Lambda(A_{j,+}-A_{j,-})$ ,
$\phi=\phi_++\phi_-$ , $\partial_t \phi = i \Lambda (\phi_+-\phi_-)$ the system
transforms to
\begin{align}
\label{20}
(i \partial_t \pm \Lambda)A_{j,\pm} & = -A_j \mp(2 \Lambda)^{-1} \mathcal
M_j(A_j,\partial_t A_j,\phi,\partial_t \phi)\, ,\\
\label{21'}
(i \partial_t \pm \Lambda)\phi_{\pm} & = -\phi \mp(2 \Lambda)^{-1} \mathcal
N(A_j,\partial_t A_j,\phi,\partial_t \phi) \, .
\end{align}
The initial data transform to 
$$A_{j,\pm}(0)= \half(a_j \pm (i\Lambda)^{-1} b_j) \in 
\widehat{H}^{l,r} \quad , \quad \phi_{\mp}(0)= \half(\phi_0 \pm (i\Lambda)^{-1}
\phi_1) \in \widehat{H}^{s,r} \, .$$ 
This transformation allows to apply Theorem \ref{Theorem0.3}. For the wave
equations one has to estimate
\begin{equation}
\label{17'}
\| \Box A_j \|_{H^r_{s-1,b'-1+}}
\end{equation}
and
\begin{equation}
\label{18}
\| \Box \phi \|_{H^r_{l-1,b-1+}} \, .
\end{equation}
 
We obtain the necessary estimates for (\ref{17'}) in Lemma \ref{Lemma3a} and
Lemma \ref{Lemma4'}, and for (\ref{18})  in Lemma \ref{Lemma1'} , Lemma
\ref{Lemma5'} and Corollary \ref{Cor.2}.
\\[0.7em]

Let us first consider the elliptic equations (\ref{12}) and (\ref{13}). The
equation (\ref{13}) is easier to handle. We prove that it is solved by $B_0 =
\partial_t A_0 \in C^0([0,T],\dot{H}^{\sigma}) $. Defining $A_0(t) := \int_0^t
B_0(s) ds + a_0$ , where $a_0$ is the solution of the following variational
problem at $t=0$:
$$ \int_{\mathbb{R}^2} |\nabla A_0|^2 + |D_0 \phi|^2 \, dx \, \rightarrow \,
\min   $$
in $H^1(\mathbb{R}^2)$ . The following Lemma was shown by Czubak-Pikula
\cite{CP}, Lemma 3.1 and Lemma 3.2.
\begin{lemma}
\label{Lemma1}
If $B_0$ solves (\ref{13}), then $A_0(t)$ solves (\ref{12}) in the sense of
tempered distributions for every $t \in [0,T]$ . The solution of (\ref{12}) is
unique in $\dot{H}^{\half} \cap \dot{H}^1$ .
\end{lemma}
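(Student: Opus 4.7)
The proof naturally splits into existence (showing $A_0$ really solves (\ref{12})) and uniqueness in $\dot{H}^{\half} \cap \dot{H}^1$. For existence I introduce
\[
F(t) := \Delta A_0(t) + Im(\phi \overline{\partial_t \phi})(t) - |\phi|^2(t)\, A_0(t)
\]
and aim to show $F \equiv 0$ as a tempered distribution on $\R^2$. At $t=0$ this follows from the variational characterization of $a_0$: expanding $|D_0\phi|^2 = |\partial_t\phi|^2 - 2 A_0\, Im(\phi\overline{\partial_t\phi}) + A_0^2 |\phi|^2$, the Euler--Lagrange equation of the functional $\int_{\R^2}(|\nabla A_0|^2 + |D_0\phi|^2)\,dx$ with respect to $A_0$ is precisely (\ref{12}) evaluated at $t=0$, so $F(0) = 0$.

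To propagate this to $t>0$ it suffices to check $\partial_t F \equiv 0$ distributionally. A direct calculation yields
\[
\partial_t F = \Delta B_0 + Im(\phi \overline{\partial_t^2 \phi}) - 2 A_0\, Re(\phi \overline{\partial_t\phi}) - |\phi|^2 B_0.
\]
Here $\Delta B_0$ is replaced using (\ref{13}), while the second term is simplified by substituting $\partial_t^2\phi = \Delta\phi + \Box\phi$ from the scalar equation (\ref{7}) together with the identity $Im(\phi\overline{\Delta\phi}) = \partial^j Im(\phi\overline{\partial_j\phi})$. Finally $\partial^j(|\phi|^2 A_j) = 2 A^j Re(\phi\overline{\partial_j\phi})$ is expanded with the Coulomb gauge $\partial^j A_j = 0$. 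The cubic and quartic contributions, namely those proportional to $A^j Re(\phi\overline{\partial_j\phi})$, $A_0 Re(\phi\overline{\partial_t\phi})$, $|\phi|^2 B_0$ and $(\partial_t A_0)|\phi|^2$, are then expected to cancel identically; this cancellation is, morally, the Maxwell charge-conservation identity $\partial^\mu j_\mu = 0$. The careful bookkeeping of signs in the null-structure and mass terms of (\ref{7}) is the main obstacle. Once $\partial_t F \equiv 0$ is verified, integration in $t$ combined with $F(0)=0$ yields $F \equiv 0$, so $A_0$ solves (\ref{12}) as required.

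For uniqueness, let $w = A_0 - A_0'$ be the difference of two solutions of (\ref{12}) lying in $\dot{H}^{\half}(\R^2) \cap \dot{H}^1(\R^2)$, so that $w$ satisfies the linear equation $\Delta w = |\phi|^2 w$. By the 2D Sobolev embedding $\dot{H}^{\half}(\R^2) \hookrightarrow L^4(\R^2)$ together with the regularity of $\phi$, the integral $\int_{\R^2} |\phi|^2 w^2\,dx$ is finite, so pairing the equation with $w$ and integrating by parts gives
\[
\|\nabla w\|_{L^2}^2 + \int_{\R^2} |\phi|^2 w^2 \, dx = 0.
\]
Both terms are nonnegative, hence $\nabla w \equiv 0$; therefore $w$ is constant, and the assumption $w \in \dot{H}^{\half}(\R^2)$ forces $w = 0$.
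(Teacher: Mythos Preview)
The paper does not supply its own proof of this lemma; it simply cites Czubak--Pikula \cite{CP}, Lemmas~3.1 and~3.2. Your strategy is precisely the one used there: show $F(0)=0$ from the Euler--Lagrange equation of the variational problem, then propagate via $\partial_t F\equiv 0$ using the equation for $B_0$, the wave equation for $\phi$, and the Coulomb gauge. Your uniqueness argument by pairing $\Delta w=|\phi|^2 w$ against $w$ is correct and complete; the embedding $\dot H^{1/2}(\R^2)\hookrightarrow L^4$ makes all pairings well defined, and a nonzero constant does not lie in $\dot H^{1/2}$.

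The gap is in the existence half. You have set up the computation of $\partial_t F$ correctly, but then you stop and write that the remaining terms ``are then expected to cancel identically'' and that ``the careful bookkeeping of signs \ldots\ is the main obstacle.'' That cancellation \emph{is} the proof; without it you have only an outline. Concretely, after substituting \eqref{13}, the identity $Im(\phi\overline{\Delta\phi})=\partial^j Im(\phi\overline{\partial_j\phi})$, the expansion $\partial^j(|\phi|^2A_j)=2A^j\,Re(\phi\overline{\partial_j\phi})$, and the expression for $\Box\phi$ from \eqref{7}, you must actually verify that the four families of terms you list sum to zero. This is exactly the distributional identity $\partial^\mu j_\mu=0$ rewritten using the Coulomb gauge, and the signs are sensitive to the convention $D_\mu=\partial_\mu+iA_\mu$ and the metric $(-,+,+)$ used here. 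You should carry the computation through rather than defer it; once that is done, your argument is complete and coincides with the one the paper invokes by reference.
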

The regularity of $A_0$ and $B_0$ and estimates for $A_0(t)$ and $B_0(t)$ are
studied in Lemma \ref{Lemma6} , Cor. \ref{Cor.1} and Lemma \ref{Lemma7}.

We start by estimating the null forms.
The proof of the following bilinear estimates relies on estimates given by
Foschi and Klainerman \cite{FK}. We first treat the case $r>1$ , but close to
$1$.
\begin{lemma}
\label{Lemma5.1}
Let $1 < r \le 2$ .
Assume $0 \le\alpha_1,\alpha_2 $ ,  $\alpha_1+\alpha_2 \ge \frac{1}{r}$ and $ b
> \frac{1}{r}$. Let 
$$q_{12} (u,v) := Q_{12}(D^{-1}u,D^{-1}v) \, , $$
where the null form $Q_{12}$ is given by (\ref{q}).
The following estimate applies
$$ \|q_{12}(u,v)\|_{H^r_{0,0}} \lesssim \|u\|_{H^r_{\alpha_1,b}}
\|v\|_{H^r_{\alpha_2,b}} \, . $$
\end{lemma}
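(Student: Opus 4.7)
The strategy is to reduce to a bilinear estimate for free waves via the transfer principle, and then to exploit the null structure through a Foschi--Klainerman symbol identity in two dimensions. Writing $u = u_+ + u_-$, $v = v_+ + v_-$ with $u_\pm = \half(u \mp (i\Lambda)^{-1}\partial_t u)$, one has $\|u_\pm\|_{X^r_{\alpha_1,b,\pm}} \lesssim \|u\|_{H^r_{\alpha_1,b}}$, and analogously for $v$. Since $H^r_{0,0}$ coincides with $\hat{L}^{r}_t(\hat{L}^{r}_x)$ in the notation preceding Proposition \ref{Prop.0.1}, and since $b > \frac{1}{r}$, that proposition reduces the claim to the free-wave bound
$$ \| q_{12}(e^{\pm_1 itD}f_1, e^{\pm_2 itD}f_2) \|_{\hat{L}^{r}_t(\hat{L}^{r}_x)} \,\lesssim\, \|f_1\|_{\widehat{H}^{\alpha_1,r}} \, \|f_2\|_{\widehat{H}^{\alpha_2,r}} $$
for each choice of signs $\pm_1,\pm_2$, under $\alpha_1,\alpha_2 \ge 0$ and $\alpha_1+\alpha_2 \ge \frac{1}{r}$.

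On the Fourier side, the symbol of $q_{12}$ equals $(\xi_1 \wedge \xi_2)/(|\xi_1||\xi_2|) = \pm\sin\theta_{12}$, where $\theta_{12}$ is the angle between the two input frequencies. I would apply the $n=2$ algebraic identity
$$ |\xi_1 \wedge \xi_2|^2 \,=\, \tfrac{1}{4} \bigl((|\xi_1|+|\xi_2|)^2 - |\xi_1+\xi_2|^2\bigr)\bigl(|\xi_1+\xi_2|^2 - (|\xi_1|-|\xi_2|)^2\bigr), $$
together with the resonance relations on the support of $\tau = \pm_1|\xi_1|\pm_2|\xi_2|$: for same signs the first factor is bounded by $(|\xi_1|+|\xi_2|+|\xi_1+\xi_2|) \cdot ||\tau|-|\xi_1+\xi_2||$, while for opposite signs the symmetric bound controls the second factor. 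This turns the null form into a vanishing on the characteristic set that can be traded for output-modulation regularity, yielding an effective derivative gain over the standard product of two waves, in precise analogy with \cite{FK} for $r=2$ and \cite{GN} for Fourier--Lebesgue norms when $n=2$.

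Substituting this pointwise bound into the convolution defining $q_{12}$ and invoking the $n=2$ bilinear Fourier--Lebesgue cone estimate of Grigoryan--Nahmod \cite{GN}, which adapts the $L^2$-bilinear estimates of Foschi--Klainerman \cite{FK}, the null form gain exactly compensates for the Hausdorff--Young loss, giving the estimate as soon as $\alpha_1+\alpha_2 \ge \frac{1}{r}$; the strict inequality $b > \frac{1}{r}$ supplies the small slack needed to pass back to the inhomogeneous setting through the transfer principle. The main obstacle I expect is the opposite-sign case with large output modulation and nearly-antiparallel input frequencies, where the null form gain is concentrated in the small factor $|\xi_1+\xi_2|^2 - (|\xi_1|-|\xi_2|)^2$ and must be balanced delicately against the endpoint loss of the Fourier--Lebesgue bilinear estimate; this is precisely where the refined version from \cite{GN}, rather than a direct Bourgain-space argument, is indispensable.
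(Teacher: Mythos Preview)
Your approach is essentially the paper's: reduce to free waves via the transfer principle (Proposition~\ref{Prop.0.1}), bound the $q_{12}$ symbol pointwise by the Foschi--Klainerman identity, and then control the resulting weighted convolution. The only substantive difference is that where you invoke \cite{GN} as a black box, the paper carries the computation out by hand: after reducing by interpolation to $\alpha_1=\tfrac1r$, $\alpha_2=0$, it applies H\"older to reduce to a uniform bound $\sup_{\tau,\xi} I \lesssim 1$, and then evaluates the cone-delta integrals case by case (elliptic $\pm_1=\pm_2$ via \cite{FK}, Lemma~4.3; hyperbolic $\pm_1\neq\pm_2$ via \cite{FK}, Prop.~4.5 and Lemma~4.4, splitting on $|\eta|+|\xi-\eta|\lessgtr 2|\xi|$). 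That explicit case analysis is the actual work, and it is where your ``main obstacle'' is resolved; if \cite{GN} indeed contains the needed endpoint estimate in citable form you are done, but otherwise you would have to reproduce these computations.

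One correction: the preliminary decomposition $u=u_++u_-$ with $u_\pm=\tfrac12(u\mp(i\Lambda)^{-1}\partial_t u)$ and the claimed bound $\|u_\pm\|_{X^r_{\alpha_1,b,\pm}}\lesssim\|u\|_{H^r_{\alpha_1,b}}$ are both unnecessary and, in the region $|\tau|\gg|\xi|$, the inequality actually fails (the factor $1\mp\tau/\langle\xi\rangle$ grows like $|\tau|/\langle\xi\rangle$). The transfer principle already delivers the $H^r_{\alpha_i,b}$ conclusion directly from the free-wave estimate, so simply drop this step.
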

\begin{proof}
	Because we use inhomogeneous norms it is obviously possible to assume $\alpha_1
+ \alpha_2 = \frac{1}{r}$ . Moreover, by interpolation we may reduce to the case
$\alpha_1= \frac{1}{r}$ , $\alpha_2 =0$ .
	
The left hand side of the claimed estimate equals
$$ \|{\mathcal F}(q_{12}(u,v))\|_{L^{r'}_{\tau \xi}} = \| \int
q_{12}(\eta,\eta-\xi) \tilde{u}(\lambda,\eta) \tilde{v}(\tau - \lambda,\xi -
\eta) d\lambda d\eta \|_{L^{r'}_{\tau \xi}} \, . $$
Let now $u(t,x) = e^{\pm_1 iD} u_0^{\pm_1}(x)$ , $v(t,x) = e^{\pm_2 iD}
v_0^{\pm_2}(x)$ , so that 
$$ \tilde{u}(\tau,\xi) = c \delta(\tau \mp_1 |\xi|) \widehat{u_0^{\pm_1}}(\xi)
\quad , \quad \tilde{v}(\tau,\xi) = c \delta(\tau \mp_2 |\xi|)
\widehat{v_0^{\pm_2}}(\xi) \, . $$
This implies
\begin{align*}
&\|{\mathcal F}(q_{12}(u,v))\|_{L^{r'}_{\tau \xi}} \\
&= c^2 \| \int q_{12}(\eta,\eta-\xi) \widehat{u_0^{\pm_1}}(\eta)
\widehat{v_0^{\pm_2}}(\xi-\eta) \,\delta(\lambda \mp_1 |\eta|)
\delta(\tau-\lambda\mp_2|\xi-\eta|) d\lambda d\eta \|_{L^{r'}_{\tau \xi}} \\
& = c^2 \| \int q_{12}(\eta,\eta-\xi) \widehat{u_0^{\pm_1}}(\eta)
\widehat{v_0^{\pm_2}}(\xi-\eta) \,\delta(\tau\mp_1|\eta| \mp_2|\xi-\eta|) d\eta
\|_{L^{r'}_{\tau \xi}} \, .
\end{align*}
By symmetry we only have to consider the  elliptic case $\pm_1=\pm_2 = +$ and
the hyperbolic case $\pm_1= + \, , \, \pm_2=-$ .  \\
{\bf Elliptic case.} We obtain by \cite{FK}, Lemma 13.2:
$$|q_{12}(\eta,\xi-\eta)| \le \frac{|\eta_1 (\xi - \eta)_2 - \eta_2
(\xi-\eta)_1|}{|\eta| \, |\xi - \eta|} \lesssim \frac{|\xi|^{\half} (|\eta| +
|\xi - \eta| - |\xi|)^{\half}}{|\eta|^{\half} |\xi - \eta|^{\half}} \, . $$
By H\"older's inequality we obtain
\begin{align*}
&\|{\mathcal F}(q_{12}(u,v))\|_{L^{r'}_{\tau \xi}} \\
& \lesssim \|\int \frac{|\xi|^{\half} ||\tau|-|\xi||^{\half}}{|\eta|^{\half}
|\xi - \eta|^{\half}} \,
 \delta(\tau-|\eta|-|\xi - \eta|) \, |\widehat{u_0^+}(\eta)| \,
|\widehat{v_0^+}(\xi - \eta)| d\eta \|_{L^{r'}_{\tau \xi}} \\
& \lesssim \sup_{\tau,\xi} I  \,\, \|\widehat{D^{\frac{1}{r}} u_0^+}\|_{L^{r'}}
\| \widehat{v_0^+}\|_{L^{r'}} \, ,
\end{align*}
where
$$ I = |\xi|^{\half} ||\tau|-|\xi||^{\half} \left( \int \delta(\tau - |\eta| -
|\xi - \eta|) \, |\eta|^{-1-\frac{r}{2}} |\xi - \eta|^{-\frac{r}{2}} d\eta
\right)^{\frac{1}{r}} \, . $$
We want to prove $ \sup_{\tau,\xi} I \lesssim 1 $ . By \cite{FK}, Lemma 4.3 we
obtain
$$\int \delta(\tau - |\eta| - |\xi - \eta|) \, |\eta|^{-1-\frac{r}{2}} |\xi -
\eta|^{-\frac{r}{2}} d\eta \sim \tau^A ||\tau|-|\xi||^B \, , $$
where $A= \max(1+\frac{r}{2},\frac{3}{2}) - 1-r= -\frac{r}{2}$ and
$B=1-\max(1+\frac{r}{2},\frac{3}{2})=-\frac{r}{2}$ . Using $|\xi| \le |\tau|$ 
this implies
$$
I  \lesssim |\xi|^{\half} ||\tau|-|\xi||^{\half} \tau^{-\half}
||\tau|-|\xi||^{-\half}\le  1 \, .$$
{\bf Hyperbolic case.} We start with the following bound (cf. \cite{FK}, Lemma
13.2):
$$  |q_{12}(\eta,\xi-\eta)| \le \frac{|\eta_1 (\xi - \eta)_2 - \eta_2
(\xi-\eta)_1|}{|\eta| \, |\xi - \eta|}  \lesssim \frac{|\xi|^{\half}
(|\xi|-||\eta|-|\eta-\xi||)^{\half}}{|\eta|^{\half} |\xi-\eta|^{\half}} \, , $$
so that similarly as in the elliptic case we have to estimate
$$ I = |\xi|^{\half} ||\tau|-|\xi||^{\half} \left( \int \delta(\tau - |\eta| +
|\xi - \eta|) \, |\eta|^{-1-\frac{r}{2}} |\xi - \eta|^{-\frac{r}{2}} d\eta
\right)^{\frac{1}{r}} \, . $$
In the subcase $|\eta|+|\xi-\eta| \le 2|\xi|$ we apply \cite{FK}, Prop. 4.5 and
obtain
$$\int \delta(\tau - |\eta| + |\xi - \eta|) \, |\eta|^{-1-\frac{r}{2}} |\xi -
\eta|^{-\frac{r}{2}} d\eta \sim |\xi|^A ||\xi|-|\tau||^B \, . $$
where in the subcase $0 \le \tau \le |\xi|$ we obtain
$A=\max(\frac{r}{2},\frac{3}{2}) - 1-r = \half -r$ and $B= 1-
\max(\frac{r}{2},\frac{3}{2})= -\frac{1}{2}$. \\
This implies
$$I \lesssim |\xi|^{\half} ||\tau|-|\xi||^{\half} |\xi|^{\frac{1}{2r}-1}
||\tau|-|\xi||^{-\frac{1}{2r}} \lesssim  1 \, . $$
Similarly in the subcase $-|\xi| \le \tau \le 0$ we obtain
$A=\max(1+\frac{r}{2},\frac{3}{2})-1-r = - \frac{r}{2}$, $B= 1 -
\max(1+\frac{r}{2},\frac{3}{2}) = -\frac{r}{2} \, ,$ which implies
$$ I \sim |\xi|^{\half} ||\tau|-|\xi||^{\half} |\xi|^{-\half}
||\tau|-|\xi||^{-\half} = 1 \, .$$
In the subcase $|\eta| + |\xi-\eta| \ge 2|\xi|$ we obtain by \cite{FK}, Lemma
4.4:
 \begin{align*}
&\int \delta(\tau - |\eta| + |\xi - \eta|) \, |\eta|^{-1-\frac{r}{2}} |\xi -
\eta|^{-\frac{r}{2}} d\eta \\
&\sim ||\tau|-|\xi||^{-\half} ||\tau|+|\xi||^{-\half}\int_2^{\infty}
(|\xi|x+\tau)^{-\frac{r}{2}} (|\xi|x-\tau)^{1-\frac{r}{2}}(x^2-1)^{-\half} dx \\
&\sim  ||\tau|-|\xi||^{-\half} ||\tau|+|\xi||^{-\half} \int_2^{\infty}
(x+\frac{\tau}{|\xi|})^{-\frac{r}{2}} (x-\frac{\tau}{|\xi|})^{1-\frac{r}{2}}
(x^2-1)^{-\half} dx \, \cdot|\xi|^{1-r} \, .
\end{align*}
We remark that in fact the lower limit of the integral can be chosen as 2 by
inspection of the proof in \cite{FK}.
The integral converges, because $|\tau| \le |\xi|$ and $r > 1.$  This implies
the bound
$$ I \lesssim |\xi|^{\half}
||\tau|-|\xi||^{\half-\frac{1}{2r}}||\tau|+|\xi||^{-\frac{1}{2r}}
|\xi|^{\frac{1}{r}-1} \lesssim  1 \, . $$
Summarizing we obtain
$$\|q_{12}(u,v)\|_{H^r_{0,0}} \lesssim \|D^{\frac{1}{r}} u_0^{\pm_1}\|_{L^{r'}} 
\| v_0^{\pm_2}\|_{L^{r'}} \, . $$
By the transfer principle Prop. \ref{Prop.0.1} we obtain the claimed result. 
\end{proof}

An immediate consequence is the following corollary.
\begin{Cor}
\label{Lemma2}
Assume $1 \le s \le l+1$ , $l \ge \frac{1}{r} $ and $b> \frac{1}{r}$ . Then the
following estimate applies:
$$ \|Q_{jk}(\phi,D^{-1}(R_j A_k - R_k A_j))\|_{H^r_{s-1,0}} \lesssim
\|\phi\|_{H^r_{s,b}} \sum_j \|A_j\|_{H^r_{l,b}} \, . $$
\end{Cor}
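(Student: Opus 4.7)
The plan is to reduce the corollary to Lemma~\ref{Lemma5.1} after two preparatory moves. First, I would rewrite
$$Q_{jk}(\phi, D^{-1}(R_jA_k - R_kA_j)) = q_{jk}(D\phi,\, R_jA_k - R_kA_j),$$
where $q_{jk}(u,v) := Q_{jk}(D^{-1}u, D^{-1}v)$ in analogy with $q_{12}$ in Lemma~\ref{Lemma5.1}. This puts the expression into exactly the form to which Lemma~\ref{Lemma5.1} applies. Although that lemma is stated only for $Q_{12}$, its proof carries over verbatim to any $q_{jk}$ because the only index-dependent input is the rotation-invariant angle bound from \cite{FK}, Lemma~13.2.

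Next, to place the output weight $\langle\xi\rangle^{s-1}$ on a single factor, I would apply the elementary splitting
$$\langle\xi\rangle^{s-1} \lesssim \langle\eta\rangle^{s-1} + \langle\xi-\eta\rangle^{s-1},$$
valid since $s \ge 1$. This breaks the desired estimate into two bilinear pieces, with $\Lambda^{s-1}$ sitting on either the $\phi$-factor or the $B$-factor $B := R_jA_k - R_kA_j$. In both pieces I would use the pointwise Fourier bound $|\widehat{B}(\xi)| \lesssim \sum_j |\widehat{A_j}(\xi)|$ coming from the boundedness of the Riesz symbol.

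For the piece with $\Lambda^{s-1}$ on $\phi$, I would invoke Lemma~\ref{Lemma5.1} with $\alpha_1 = 0$ and $\alpha_2 = \frac{1}{r}$: the factor $\|\Lambda^{s-1} D\phi\|_{H^r_{0,b}}$ is controlled by $\|\phi\|_{H^r_{s,b}}$ via $\langle\eta\rangle^{s-1}|\eta| \le \langle\eta\rangle^s$, and the $\frac{1}{r}$ derivatives landing on $B$ are absorbed using $l \ge \frac{1}{r}$. For the piece with $\Lambda^{s-1}$ on $B$, I would use Lemma~\ref{Lemma5.1} with the adaptive split $\alpha_1 = \min(s-1,\frac{1}{r})$, $\alpha_2 = \frac{1}{r} - \alpha_1$. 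Then $\phi$ has to absorb only $\alpha_1 + 1 \le s$ derivatives and $B$ only $s - 1 + \alpha_2 \le l$ derivatives. The feasibility of this split rests on the arithmetic identity $(s-1) + (l - s + 1) = l \ge \frac{1}{r}$, which is exactly the combination of the hypotheses $s \le l+1$ and $l \ge \frac{1}{r}$.

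The main obstacle I anticipate is precisely this bookkeeping in the second piece: the naive rigid choices $\alpha_1 = \frac{1}{r}$ or $\alpha_2 = \frac{1}{r}$ would each require a strengthening of one hypothesis by $\frac{1}{r}$ (forcing $s \ge 1 + \frac{1}{r}$ or $s \le l + 1 - \frac{1}{r}$ respectively), so the adaptive split is essential. Once both pieces are in place, summing and applying the triangle inequality yields the claim.
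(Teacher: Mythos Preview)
Your proposal is correct and follows essentially the same route as the paper: rewrite $Q_{jk}(\phi,D^{-1}v)=q_{jk}(D\phi,v)$, distribute the $\langle\xi\rangle^{s-1}$ weight via the fractional Leibniz splitting, and then invoke Lemma~\ref{Lemma5.1}. The paper compresses all of this into a single line, whereas you have spelled out the Leibniz step and the bookkeeping for the $(\alpha_1,\alpha_2)$-split explicitly; your adaptive choice $\alpha_1=\min(s-1,\tfrac{1}{r})$, $\alpha_2=\tfrac{1}{r}-\alpha_1$ works, though the slightly simpler choice $\alpha_1=s-1$, $\alpha_2=l-s+1$ (giving $\alpha_1+\alpha_2=l\ge\tfrac{1}{r}$) would also do the job and makes the role of the hypotheses $s\le l+1$ and $l\ge\tfrac{1}{r}$ more transparent.
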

\begin{proof}
After application of the fractional Leibniz rule we obtain by Lemma
\ref{Lemma5.1} the result as follows:
$$\|Q_{12}(\phi,D^{-1}v)\|_{H^r_{s-1,0}} \hspace{-0.3em} = \|q_{12}(D \phi,v)\|_{H^r_{s-1,0}} \hspace{-0.1em}
\lesssim \|D \phi\|_{H^r_{s-1,b}} \|v\|_{H^r_{l,b}} \lesssim 
\|\phi\|_{H^r_{s,b}} \|v\|_{H^r_{l,b}}  ,$$
if $l \ge \frac{1}{r}$ .
\end{proof}
\begin{lemma}
\label{Lemma3*}
Assume $1 \le l \le s+\half$ , $2s-l > \frac{3}{2r}$ and $b> \frac{1}{r}$ . Then
the following estimate pertains:
$$ \|D^{-1} Q_{jk}(Re \, \phi, Im \, \psi)\|_{H^r_{l-1,0}} \le
\|\phi\|_{H^r_{s,b}} \|\psi\|_{H^r_{s,b}}\, . $$
\end{lemma}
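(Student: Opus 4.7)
My plan is to follow the proof of Lemma \ref{Lemma5.1}, with two adaptations: handling the extra output weight $\langle\xi\rangle^{l-1}$ via a fractional Leibniz split, and absorbing the $|\xi|^{-1}$ from $D^{-1}$ into the null-form kernel. Since $l\ge 1$, the bound $\langle\xi\rangle^{l-1}\lesssim\langle\eta\rangle^{l-1}+\langle\xi-\eta\rangle^{l-1}$ distributes the output weight between the two factors. By symmetry of the bilinear form it suffices to handle the term
$$\|D^{-1}Q_{jk}(\Lambda^{l-1}\phi,\psi)\|_{H^r_{0,0}}\lesssim\|\phi\|_{H^r_{s,b}}\|\psi\|_{H^r_{s,b}},$$
i.e., to prove the estimate with effective input regularities $s-l+1$ and $s$.

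I would next split the output frequency. For low output frequency $|\xi|\lesssim 1$, the identity $m(\eta,\xi-\eta)=\eta_j\xi_k-\eta_k\xi_j$ yields $|m|\le|\eta||\xi|$, which cancels $|\xi|^{-1}$ and leaves an elementary product estimate in Fourier-Lebesgue spaces, straightforward from $s\ge 1$. For high output frequency $|\xi|\gtrsim 1$, apply the transfer principle (Prop.\ \ref{Prop.0.1}) to reduce to free waves $\phi=e^{\pm_1 itD}u_0$, $\psi=e^{\pm_2 itD}v_0$, and invoke the Foschi-Klainerman null bound $|m(\eta,\xi-\eta)|\lesssim|\xi|^{\frac{1}{2}}|\eta|^{\frac{1}{2}}|\xi-\eta|^{\frac{1}{2}}||\tau|-|\xi||^{\frac{1}{2}}$ in each of the elliptic and hyperbolic regimes. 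Combined with the $|\xi|^{-1}$ from $D^{-1}$, the net output factor becomes $|\xi|^{-\frac{1}{2}}||\tau|-|\xi||^{\frac{1}{2}}$, replacing the $|\xi|^{\frac{1}{2}}||\tau|-|\xi||^{\frac{1}{2}}$ of Lemma \ref{Lemma5.1}. A H\"older inequality in $\eta$ then reduces the matter to $\sup_{\tau,\xi}I\lesssim 1$, where $I$ is the analogue of Lemma \ref{Lemma5.1}'s $I$ with input weight exponents $s-l+1$ and $s$ replacing $1/r$ and $0$.

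Finally, one evaluates the $\eta$-integral in $I$ using \cite{FK}, Lemma 4.3 (elliptic case $\pm_1=\pm_2$) and \cite{FK}, Prop.\ 4.5 (hyperbolic case $\pm_1\neq\pm_2$, with its subcases $|\eta|+|\xi-\eta|\lessgtr 2|\xi|$). The threshold $2s-l>\frac{3}{2r}$ is precisely what forces the resulting combined exponents of $|\xi|$, $\tau$, and $||\tau|-|\xi||$ to be nonpositive after applying $|\xi|\le\tau$ (elliptic) or $|\tau|\le|\xi|$ (hyperbolic). The main obstacle is the hyperbolic subcase $|\eta|+|\xi-\eta|\ge 2|\xi|$, where convergence of the resulting one-dimensional integral (cf.\ the last lines of Lemma \ref{Lemma5.1}'s proof) again relies on the strict inequality $2s-l>\frac{3}{2r}$.
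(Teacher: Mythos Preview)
Your approach is workable in outline but takes a substantially harder route than the paper. The paper's proof is two lines: it invokes the elementary null-form bound $Q_{jk}(u,v) \precsim D(D^{1/2}u\, D^{1/2}v)$ from \cite{KS}, so that $D^{-1}Q_{jk}(\phi,\psi) \precsim D^{1/2}\phi \cdot D^{1/2}\psi$, and the claim reduces to the product estimate $\|uv\|_{H^r_{l-1,0}} \lesssim \|u\|_{H^r_{s-1/2,b}}\|v\|_{H^r_{s-1/2,b}}$. A fractional Leibniz redistribution (using $1 \le l \le s+\half$) turns this into $\|uv\|_{H^r_{0,0}} \lesssim \|u\|_{H^r_{s-l+1/2,b}}\|v\|_{H^r_{s-1/2,b}}$, which is exactly Lemma \ref{Lemma3'} under the hypothesis $2s-l > \frac{3}{2r}$.

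Your plan instead replays the Foschi--Klainerman machinery of Lemma \ref{Lemma5.1} with modified weights, feeding the $D^{-1}$ into the output kernel and redoing the $(\pm_1,\pm_2)$ case analysis for $I$. This can be pushed through, but it effectively re-derives from scratch the combined content of the KS bound and Lemma \ref{Lemma3'}; the whole point of the estimate $Q_{jk}(u,v) \precsim D(D^{1/2}u\, D^{1/2}v)$ is to package the angular gain so that only a plain product lemma remains. One small correction to your sketch: the low-frequency step appeals to ``$s \ge 1$,'' which is not a hypothesis here (e.g.\ at $r=2$ the assumptions allow $s > 7/8$); the correct justification for that step is again $2s-l > \frac{3}{2r}$, via a product estimate.
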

\begin{proof}
If we apply the elementary estimate (cf. (\cite{KS})) 
$$Q_{jk}(u,v) \precsim D(D^{\half}u D^{\half}v) \, ,$$
we reduce to
$$\|uv\|_{H^r_{l-1,0}} \lesssim \|u\|_{H^r_{s-\half,b}} \|v\|_{H^r_{s-\half,b}}
\, .$$
If $1 \le l \le s+\half$ the fractional Leibniz rule reduces this to
$$\|uv\|_{H^{0,0}} \lesssim \|u\|_{H^r_{s-l+\half,b}} \|v\|_{H^r_{s-\half,b}} \,
.$$
This follows from Lemma \ref{Lemma3'} if $2s-l >\frac{3}{2r}$ .
\end{proof}

\begin{lemma}
\label{Lemma3'}
Let $1<r \le 2$ and $\alpha_1,\alpha_2 \ge 0$ . If $\alpha_1+\alpha_2
>\frac{3}{2r}$ $b_1,b_2 > \frac{1}{2r}$ and $b_1+b_1 > \frac{3}{2r}$ . Then the
following estimate applies:
$$\|uv\|_{H^r_{0,0}} \lesssim \|u\|_{H^r_{\alpha_1,b_1}}
\|v\|_{H^r_{\alpha_2,b_2}} \, . $$
\end{lemma}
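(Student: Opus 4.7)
The lemma is the Fourier-Lebesgue analog, for $1<r\le 2$, of the classical $L^2$-based 2D wave-Sobolev product estimate (the case $r=2$, with thresholds $\alpha_1+\alpha_2>\tfrac{3}{4}$, $b_i>\tfrac{1}{4}$, $b_1+b_2>\tfrac{3}{4}$, is standard and appears in \cite{AFS}). Unwinding the definitions, one has $\|uv\|_{H^r_{0,0}} = \|\tilde u * \tilde v\|_{L^{r'}_{\tau,\xi}}$. Setting $F(\tau,\xi) = \langle\xi\rangle^{\alpha_1}\langle|\tau|-|\xi|\rangle^{b_1}\tilde u(\tau,\xi)$ and $G$ analogously, so that $\|F\|_{L^{r'}}=\|u\|_{H^r_{\alpha_1,b_1}}$ and likewise for $G$, the claim reduces to the weighted convolution bound
$$
\left\|\int \frac{F(\tau_1,\xi_1)\,G(\tau-\tau_1,\xi-\xi_1)}{\langle\xi_1\rangle^{\alpha_1}\langle|\tau_1|-|\xi_1|\rangle^{b_1}\langle\xi-\xi_1\rangle^{\alpha_2}\langle|\tau-\tau_1|-|\xi-\xi_1|\rangle^{b_2}}\,d\tau_1\,d\xi_1\right\|_{L^{r'}_{\tau,\xi}} \lesssim \|F\|_{L^{r'}}\|G\|_{L^{r'}},
$$
which by duality with $L^r_{\tau,\xi}$ is equivalent to a trilinear integral bound against a test function $w\in L^r$.

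The plan is to decompose both factors dyadically in spatial frequency $|\xi_i|\sim N_i$, modulation $\langle|\tau_i|-|\xi_i|\rangle\sim L_i$, and by the sign $\pm_i$ of $\tau_i$, thereby separating the two wave cones. For each dyadic block one reduces, via a variant of the transfer principle (Proposition \ref{Prop.0.1}), to a bilinear Fourier-Lebesgue estimate for free waves $e^{\pm_1 itD}f_1\cdot e^{\pm_2 itD}f_2$; the localization to modulation $\sim L_i$ loses at most $L_i^{1/r'}$ on each factor. For the free-wave piece I would mimic the Foschi-Klainerman scheme already used in the proof of Lemma \ref{Lemma5.1}: the bilinear spectral support concentrates on the codimension-one subvariety $\tau=\pm_1|\eta|\pm_2|\xi-\eta|$, and after Hölder and Hausdorff-Young the estimate follows from the geometric integrals
$$
\int \delta\bigl(\tau\mp_1|\eta|\mp_2|\xi-\eta|\bigr)\,|\eta|^{-\alpha_1 r}|\xi-\eta|^{-\alpha_2 r}\,d\eta,
$$
which are evaluated exactly as in \cite{FK}, Lemma 4.3, Lemma 4.4 and Proposition 4.5.

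The main obstacle will be bookkeeping the four-parameter dyadic sum over $(N_1,N_2,L_1,L_2)$ while maintaining the sharp thresholds: the individual conditions $b_i>\tfrac{1}{2r}$ come from requiring convergence of integrals of the type $\int\langle\sigma\rangle^{-b_i r}d\sigma$ on a single factor, whereas the combined conditions $\alpha_1+\alpha_2>\tfrac{3}{2r}$ and $b_1+b_2>\tfrac{3}{2r}$ are exactly what makes the resulting dyadic sums over $N_i$ and $L_i$ converge (after the strict inequality supplies a small gain). A convenient shortcut is to verify the endpoints $r=2$ (the classical estimate of \cite{AFS}) and $r\to 1^+$ (where $r'\to\infty$ and the weighted convolution becomes a pointwise Hausdorff-Young estimate) and then invoke the complex bilinear interpolation identity for $H^r_{s,b}$ recalled in Section 2; this is essentially the argument of Grigoryan-Tanguay \cite{GT} cited in the introduction.
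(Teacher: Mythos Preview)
Your proposal is correct and aligned with the paper's approach, though you are doing far more work than the paper does: the paper's entire proof is the single sentence ``This follows from \cite{GT}, Prop.~3.1 by summation over the dyadic pieces.'' In other words, the dyadic block estimate you plan to derive via the Foschi--Klainerman integrals is precisely what \cite{GT} supplies, and the paper simply quotes it and sums; your closing remark that the interpolation shortcut ``is essentially the argument of Grigoryan--Tanguay'' already identifies the right reference.
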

\begin{proof}
This follows from \cite{GT}, Prop. 3.1 by summation over the dyadic pieces.
\end{proof}

\begin{lemma}
\label{Lemma5.6}
Let $1 < r \le 2$ , $0 \le \alpha_1,\alpha_2$ and  $\alpha_1+\alpha_2 \ge
\frac{1}{r}+b$ , $b >\frac{1}{r}$ . Then the following estimate applies:
$$ \|uv\|_{H^r_{0,b}} \lesssim
 \|u\|_{H^r_{\alpha_1,b}} \|v\|_{H^r_{\alpha_2,b}} \, . $$
\end{lemma}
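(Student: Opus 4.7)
I plan to distribute the modulation weight $\langle\sigma_0\rangle^b$ (where $\sigma_0=||\tau|-|\xi||$) inside the convolution $\widetilde{uv}=\tilde u\ast\tilde v$. First decompose $u=u_++u_-$ and $v=v_++v_-$ according to the sign of the time-frequency variable; on each sign combination, elementary algebra yields the pointwise inequality
$$\langle\sigma_0\rangle^b\;\lesssim\;\langle\sigma_1\rangle^b+\langle\sigma_2\rangle^b+R^b,$$
where $\sigma_1=||\lambda|-|\eta||$, $\sigma_2=||\tau-\lambda|-|\xi-\eta||$ are the analogous modulations for $\tilde u,\tilde v$, and $R$ is a resonance remainder. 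In the elliptic $(\pm\pm)$ cases $R=|\eta|+|\xi-\eta|-|\xi|\lesssim\min(\langle\eta\rangle,\langle\xi-\eta\rangle)$, while in the hyperbolic $(\pm\mp)$ cases the classical null-form identity gives $R\lesssim|\eta||\xi-\eta|/|\xi|$.

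For the two symmetric contributions coming from $\langle\sigma_1\rangle^b$ and $\langle\sigma_2\rangle^b$, I would absorb the weight into the corresponding factor. Setting $\tilde{u'}:=\langle\sigma_1\rangle^b\tilde u$ gives $\|u'\|_{H^r_{\alpha_1,0}}=\|u\|_{H^r_{\alpha_1,b}}$, and the task reduces to the asymmetric bilinear estimate
$$\|u'v\|_{H^r_{0,0}}\;\lesssim\;\|u'\|_{H^r_{\alpha_1,0}}\|v\|_{H^r_{\alpha_2,b}}.$$
I would prove this by Minkowski in $\tau$, then the spatial Fourier--Leibniz rule
$$\|\widehat{fg}\|_{L^{r'}_\xi}\;\lesssim\;\|\langle\xi\rangle^{\alpha_1}\widehat{f}\|_{L^{r'}_\xi}\|\langle\xi\rangle^{\alpha_2}\widehat{g}\|_{L^{r'}_\xi}$$
(valid under $\alpha_1+\alpha_2>\frac{2}{r}$, which is implied by our hypothesis $\alpha_1+\alpha_2\ge\frac{1}{r}+b$ together with $b>\frac{1}{r}$), and finally closing the $\lambda$-integration by the Young embedding $L^{r'}_\lambda\ast L^1_\lambda\hookrightarrow L^{r'}_\tau$; the required $L^1_\lambda$-bound on the partial Fourier transform of $v$ comes from H\"older together with the integrability $\|\langle||\cdot|-|\xi||\rangle^{-b}\|_{L^r_\lambda}\lesssim 1$, which uses $b>\frac{1}{r}$.

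For the correction piece $R^b$, I would localize dyadically in $|\eta|$, $|\xi-\eta|$, $|\xi|$ and absorb $R^b$ into the appropriate combination of spatial weights---onto the factor of smaller frequency in the elliptic case, and using the null-form factor $|\eta||\xi-\eta|/|\xi|$ to trade regularity against an extra $|\xi|^{-b}$ compensation in the hyperbolic case. The residual bilinear $H^r_{0,0}$ estimate is then furnished by Lemma \ref{Lemma3'}. The main obstacle will be this last step: tracking the regularity shifts sign-by-sign so that the resonance structure (the $\min$ bound in the elliptic cases and the null-form bound in the hyperbolic ones) always leaves enough budget to meet the hypotheses of Lemma \ref{Lemma3'} after dyadic localization, particularly in the hyperbolic cases where the null-form gain $|\xi|^{-b}$ must be used to compensate for the weaker frequency bound on $R$.
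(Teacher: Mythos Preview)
Your overall framework---the hyperbolic Leibniz rule splitting $\langle\sigma_0\rangle^b$ into $\langle\sigma_1\rangle^b+\langle\sigma_2\rangle^b+R^b$---is exactly what the paper does, and your treatment of the $\langle\sigma_1\rangle^b,\langle\sigma_2\rangle^b$ pieces (absorb the weight into the corresponding factor and reduce to an asymmetric $H^r_{0,0}$ bilinear estimate) is fine; the paper closes that step by invoking Lemma~\ref{Lemma3'} with $\alpha_1+\alpha_2\ge\frac1r+b>\frac2r$, and your more hands-on Young/H\"older argument would give the same thing.

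The gap is in the resonance piece $R^b$. Your plan is to use the pointwise bounds $b_+\lesssim\min(\langle\eta\rangle,\langle\xi-\eta\rangle)$ (elliptic) and $b_-\lesssim|\eta||\xi-\eta|/|\xi|$ (hyperbolic), absorb $R^b$ into the spatial weights, and then close with Lemma~\ref{Lemma3'}. This loses exactly $\frac{1}{2r}$ too much. Indeed, after absorbing $R^b\lesssim\min^b$ onto the low-frequency factor you are left with effective regularities summing to $\alpha_1+\alpha_2-b\ge\frac1r$, whereas Lemma~\ref{Lemma3'} requires the sum to exceed $\frac{3}{2r}$. The hyperbolic bound does not help: since also $b_-\le|\xi|$ one has $b_-\lesssim\min(|\eta|,|\xi-\eta|)$ in that case too, and any attempt to exploit the $|\xi|^{-b}$ factor by shifting it back to the high-frequency input just reproduces the same deficit. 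No amount of dyadic bookkeeping closes this, because the pointwise bound on $R$ discards the dispersive gain that is needed.

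What the paper does instead for the $R^b$ term is to reduce $u,v$ to free waves via the transfer principle (Proposition~\ref{Prop.0.1}), so that on the support of the $\delta$-measure one has $R=b_\pm(\xi,\eta)=||\tau|-|\xi||$ exactly, and then bound the resulting kernel
\[
I=||\tau|-|\xi||^{b}\Big(\int\delta(\tau-|\eta|\mp|\xi-\eta|)\,|\eta|^{-1-br}\,d\eta\Big)^{1/r}
\]
directly using the explicit convolution formulas of Foschi--Klainerman \cite{FK} (Prop.~4.3, Prop.~4.5, Lemma~4.4). These formulas encode the curvature of the cone and yield $I\lesssim 1$ on the nose, recovering precisely the missing $\frac{1}{2r}$. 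That computation is the essential ingredient your plan is missing.
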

\begin{proof}
	We may assume $\alpha_1 = \frac{1}{r}+b$ , $\alpha_2 =0$ .
We apply the "hyperbolic Leibniz rule" (cf. \cite{AFS}, p. 128):
\begin{equation}
\label{HLR}
 ||\tau|-|\xi|| \lesssim ||\rho|-|\eta|| + ||\tau - \rho| - |\xi-\eta|| +
b_{\pm}(\xi,\eta) \, , 
\end{equation}
where
 $$ b_+(\xi,\eta) = |\eta| + |\xi-\eta| - |\xi| \quad , \quad b_-(\xi,\eta) =
|\xi| - ||\eta|-|\xi-\eta|| \, . $$

Let us first consider the term $b_{\pm}(\xi,\eta)$ in (\ref{HLR}). Decomposing 
$uv=u_+v_++u_+v_-+u_-v_++u_-v_-$ , where $u_{\pm}(t)= e^{\pm itD} f , v_\pm(t) =
e^{\pm itD} g$  , we use
$$ \widehat{u}_{\pm}(\tau,\xi) = c \delta(\tau \mp |\xi|) \widehat{f}(\xi) \quad
, \quad  \widehat{v}_{\pm}(\tau,\xi) = c \delta(\tau \mp |\xi|)
\widehat{g}(\xi)$$
and have to estimate
\begin{align*}
&\| \int  b^{b}_{\pm}(\xi,\eta) \delta(\tau - |\eta| \mp |\xi-\eta|)
\widehat{f}(\xi) \widehat{g}(\xi-\eta)d\eta \|_{L^{r'}_{\tau \xi}} \\
& = \| \int ||\tau|-|\xi||^b \delta(\tau - |\eta| \mp |\xi-\eta|)
\widehat{f}(\xi) \widehat{g}(\xi-\eta) d\eta \|_{L^{r'}_{\tau \xi}} \\
& \lesssim \sup_{\tau,\xi} I \, \|\widehat{D^{\frac{1}{r}+b} f}\|_{[L^{r'}}
\|\widehat{g}\|_{[L^{r'}} \, .
\end{align*}
Here we used H\"older's inequality, where
$$I = ||\tau|-|\xi||^b  (\int \delta(\tau-|\eta|\mp|\xi-\eta|) |\eta|^{-1-b r}
d\eta)^{\frac{1}{r}} \, . $$
In order to obtain $I \lesssim 1$ we first consider the elliptic case
$\pm_1=\pm_2=+$ and use \cite{FK}, Prop. 4.3. Thus 
$$ I \sim ||\tau|-|\xi||^b  \tau^{\frac{A}{r}} ||\tau|-|\xi||^{\frac{B}{r}} =
||\tau|-|\xi||^b ||\tau|-|\xi||^{-b} = 1$$
with $A=\max(1+br,\frac{3}{2})-(1+br)=0$ and $B=1-\max(1+br,\frac{3}{2})= -br$ .

Next we consider the hyperbolic case $\pm_1 = + \, , \, \pm_2=-$ . \\
First we assume $|\eta|+|\xi-\eta| \le 2 |\xi|$ and use \cite{FK}, Prop. 4.5
which gives
$$\int \delta(\tau - |\eta| + |\xi-\eta|) |\eta|^{-1-br}  d\eta \sim |\xi|^A
||\xi|-|\tau||^B \, , $$
where $A=\frac{3}{2}-(1+br)= \half-br$ , $B=1-\frac{3}{2}=- \half$ , if $0 \le
\tau\le |\xi|$ ,so that
$$I \sim ||\tau|-|\xi||^b |\xi|^{\frac{1}{2r}-b} ||\tau|-|\xi||^{-\frac{1}{2r}}
\lesssim 1 \, .$$   
If $-|\xi| \le \tau \le 0$ we obtain $A=\max(1+br,\frac{3}{2})-(1+br)=0$ ,
$B=1-\max(1+br,2)=-br$ , which implies  $I \lesssim 1$ .\\
Next we assume $|\eta|+|\xi-\eta| \ge 2|\xi|$ , use \cite{FK}, Lemma 4.4 and
obtain
\begin{align*}
&I \sim ||\tau|-|\xi||^b (\int \delta(\tau-|\eta|-|\xi-\eta|) |\eta|^{-1-br}
d\eta)^{\frac{1}{r}} \\
& \sim  ||\tau|-|\xi||^b \big(||\tau|-|\xi||^{-\half} ||\tau|+|\xi||^{-\half}
\int_2^{\infty} (|\xi|x + \tau)^{-br} (|\xi|x-\tau)(x^2-1)^{-\half}
dx\big)^{\frac{1}{r}} \\
& \sim ||\tau|-|\xi||^b \big(||\tau|-|\xi||^{-\half}
||\tau|+|\xi||^{-\half} \, \cdot\\
& \hspace{12em}
\cdot \int_2^{\infty} (x+\frac{\tau}{|\xi|})^{-br}
(x-\frac{\tau}{|\xi|}) (x^2-1)^{-\half} dx \,\cdot |\xi|^{1-br}\big)^{\frac{1}{r}} 
\, .
\end{align*}
This integral converges, because $\tau \le |\xi|$ and $b >\frac{1}{r}$ .This
implies
$$I \lesssim  ||\tau|-|\xi||^{b-\frac{1}{2r}} ||\tau|+|\xi||^{-\frac{1}{2r}}
|\xi|^{\frac{1}{r}-b} \lesssim  1 \, , $$
using $|\tau| \le |\xi|$ .

By the transfer principle we obtain
$$\| B_{\pm}^b (u,v)\|_{H^r_{0,0}} \lesssim \|u\|_{H^r_{\frac{1}{r}+b,b}}
\|v\|_{H^r_{0,b}} \, .$$ 
 Here $B^b_{\pm}$ denotes the operator with Fourier symbol $b_{\pm}$ . \\
Consider now the term $||\rho|-|\eta||$ (or similarly
$||\tau-\rho|-|\xi-\eta||$) in (\ref{HLR}). We have to prove
$$ \|u D_-^b v\|_{H^r_{0,0}} \lesssim \|u\|_{
H^r_{\alpha_1,b}}
\|v\|_{H^r_{\alpha_2,b}} \, , $$
which is implied by
$$\|uv\|_{H^r_{0,0}} \lesssim \|u\|_{H^r_{\alpha_1,b}}
\|v\|_{H^r_{\alpha_2,0}} \, . $$
This results from Lemma \ref{Lemma3'}, because $\alpha_1+\alpha_2\ge \frac{1}{r}
+b > \frac{2}{r}$ , which completes the proof. 
\end{proof}

\begin{lemma}
\label{Lemma3'c}
Let $1 < r \le 2$ . Assume $\alpha_1,\alpha_2 \ge 0$ and $\alpha_1+\alpha_2 >
\frac{7}{4r}$ , $b >\frac{1}{r}$ . Then the following estimate applies:
$$ \|uv\|_{H^r_{0,\frac{1}{2r}+}} \lesssim \|u\|_{H^r_{\alpha_1,b}}
\|v\|_{H^r_{\alpha_2,b}} \, . $$
\end{lemma}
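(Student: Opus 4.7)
I would prove Lemma \ref{Lemma3'c} by bilinear complex interpolation between the two endpoint bounds already available in the paper: Lemma \ref{Lemma3'} (zero output $b$-weight, spatial threshold $\alpha_1+\alpha_2 > 3/(2r)$) and Lemma \ref{Lemma5.6} (output $b$-weight $b > 1/r$, spatial threshold $\alpha_1+\alpha_2 \ge 1/r + b$, which approaches $2/r$ as $b \downarrow 1/r$). Choosing interpolation parameter $\theta = 1/2$ will land on output weight $1/(2r)+$ and the arithmetic mean $\frac12 \cdot \frac{3}{2r} + \frac12 \cdot \frac{2}{r} = \frac{7}{4r}$ of the two input thresholds.

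\textbf{Execution.} Fix $b = 1/r + \varepsilon$ with $\varepsilon > 0$ small. Then the hypotheses of Lemma \ref{Lemma3'} are satisfied ($b > 1/(2r)$ and $2b > 3/(2r)$), giving
\[ \|uv\|_{H^r_{0,0}} \lesssim \|u\|_{H^r_{\alpha_1^{(0)}, b}} \|v\|_{H^r_{\alpha_2^{(0)}, b}} \quad \text{whenever } \alpha_1^{(0)} + \alpha_2^{(0)} > 3/(2r), \]
and Lemma \ref{Lemma5.6} gives
\[ \|uv\|_{H^r_{0,b}} \lesssim \|u\|_{H^r_{\alpha_1^{(1)}, b}} \|v\|_{H^r_{\alpha_2^{(1)}, b}} \quad \text{whenever } \alpha_1^{(1)} + \alpha_2^{(1)} \ge 1/r + b. \]
Applying bilinear complex interpolation to the pointwise product $(u,v)\mapsto uv$ via the identity $[H^r_{s_0,b_0},H^r_{s_1,b_1}]_{[\theta]} = H^r_{(1-\theta)s_0+\theta s_1,(1-\theta)b_0+\theta b_1}$ recorded in Section 2, and choosing $\theta$ with $\theta b = 1/(2r) + \delta$ for small $\delta > 0$ (so $\theta \to 1/2$ as $\varepsilon,\delta \to 0^+$), one obtains
\[ \|uv\|_{H^r_{0,\,1/(2r)+\delta}} \lesssim \|u\|_{H^r_{\alpha_1,b}} \|v\|_{H^r_{\alpha_2,b}} \]
for all $\alpha_1 + \alpha_2 > (1-\theta)\frac{3}{2r} + \theta\bigl(\frac{1}{r} + b\bigr)$. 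As $\varepsilon,\delta \to 0^+$ this threshold tends to $7/(4r)$, so any $\alpha_1+\alpha_2 > 7/(4r)$ can be accommodated by choosing $\varepsilon,\delta$ sufficiently small (and splitting $\alpha_i = (1-\theta)\alpha_i^{(0)} + \theta\alpha_i^{(1)}$ with room on both sides).

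\textbf{Main obstacle.} The only subtle point is invoking bilinear complex interpolation for the pointwise product on the scale $H^r_{s,b}$ with $1<r<\infty$; this is standard given the linear interpolation identity stated in the preliminaries. A more computational alternative would mirror the proof of Lemma \ref{Lemma5.6} directly: apply the hyperbolic Leibniz rule \eqref{HLR} to the output $\langle|\tau|-|\xi|\rangle^{1/(2r)+}$-weight, reduce the two $D_-^{1/(2r)+}$-contributions to Lemma \ref{Lemma3'} (since $7/(4r) > 3/(2r)$), and bound the $b_\pm$-contribution via the Foschi-Klainerman integrals from \cite{FK}. In that route the exponent $7/(4r)$ would emerge by careful case analysis, with the binding constraint likely arising from the hyperbolic subregion $|\eta|+|\xi-\eta|\ge 2|\xi|$; the interpolation argument above is considerably cleaner and avoids retracing those computations.
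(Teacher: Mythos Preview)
Your proof is correct and follows exactly the paper's own argument: the paper's proof consists of the single line ``Interpolation between the estimates in Lemma \ref{Lemma3'} and Lemma \ref{Lemma5.6} implies the result,'' and you have simply unpacked this with the explicit choice $\theta\approx 1/2$ and the threshold arithmetic $\tfrac12\cdot\tfrac{3}{2r}+\tfrac12\cdot\tfrac{2}{r}=\tfrac{7}{4r}$. Your discussion of the limiting procedure in $\varepsilon,\delta$ and of the splitting of $\alpha_i$ is more careful than what the paper writes, but the underlying idea is identical.
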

\begin{proof}
Interpolation between the estimates in  Lemma \ref{Lemma3'} and Lemma
\ref{Lemma5.6} implies the result.
\end{proof}

Let us now consider the cubic nonlinearities.
\begin{lemma}
\label{Lemma4}
Let $1 < r \le 2$ . If $1 \le s \le l+1$ , $l > \frac{13}{8r} - \half$ , $2l-s >
\frac{7}{4r}-1 $ and $b > \frac{1}{r}$ , the following estimate applies:
$$ \|A^j A_j \phi \|_{H^r_{s-1,0}} \lesssim \|A^j\|_{H^r_{l,b}}
\|A_j\|_{H^r_{l,b}} \|\phi\|_{H^r_{s,b}}\, . $$
\end{lemma}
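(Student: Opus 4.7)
The strategy is to reduce the trilinear estimate to two bilinear ones by writing $A^jA_j\phi = A^j\cdot(A_j\phi)$ and choosing an intermediate regularity $\sigma$ satisfying
$$\max\bigl(s-1,\ s-l-1+\tfrac{3}{2r}\bigr) < \sigma \le \min(s,l),\qquad \sigma < l + s - \tfrac{7}{4r}.$$
I will use Lemma \ref{Lemma3'} at the outer level, with time weight $b>\tfrac{1}{r}$ on the factor $A^j$ and weight $\tfrac{1}{2r}+$ on the factor $A_j\phi$, and then apply Lemma \ref{Lemma3'c} at the inner level to supply the needed bound on $A_j\phi$ in $H^r_{\sigma,\frac{1}{2r}+}$.

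\noindent\textbf{Step 1 (outer estimate).} Since $s\ge 1$, the fractional Leibniz inequality $\langle\xi\rangle^{s-1}\precsim \langle\eta\rangle^{s-1}+\langle\xi-\eta\rangle^{s-1}$ gives
$$\|A^j(A_j\phi)\|_{H^r_{s-1,0}} \lesssim \|(\Lambda^{s-1}A^j)(A_j\phi)\|_{H^r_{0,0}} + \|A^j\,\Lambda^{s-1}(A_j\phi)\|_{H^r_{0,0}}.$$
Each term is bounded by Lemma \ref{Lemma3'}. The $b$-side condition $b_1+b_2>\tfrac{3}{2r}$ is $b+\tfrac{1}{2r}+>\tfrac{3}{2r}$, which holds since $b>\tfrac{1}{r}$. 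The regularity-side condition $\alpha_1+\alpha_2>\tfrac{3}{2r}$ becomes $(l-s+1)+\sigma>\tfrac{3}{2r}$ in the first term (with $s\le l+1$ ensuring $\alpha_1\ge 0$) and $l+(\sigma-s+1)>\tfrac{3}{2r}$ in the second (with $\sigma\ge s-1$ ensuring $\alpha_2\ge 0$); both reduce to $\sigma > s-l-1+\tfrac{3}{2r}$. This yields
$$\|A^jA_j\phi\|_{H^r_{s-1,0}} \lesssim \|A^j\|_{H^r_{l,b}}\,\|A_j\phi\|_{H^r_{\sigma,\frac{1}{2r}+}}.$$

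\noindent\textbf{Step 2 (inner estimate).} A second application of the Leibniz rule (now distributing $\sigma$ derivatives among $A_j$ and $\phi$, with $\sigma\le\min(s,l)$), combined with Lemma \ref{Lemma3'c}, gives
$$\|A_j\phi\|_{H^r_{\sigma,\frac{1}{2r}+}} \lesssim \|A_j\|_{H^r_{l,b}}\|\phi\|_{H^r_{s,b}}$$
whenever $l+s-\sigma>\tfrac{7}{4r}$, i.e.\ whenever the upper constraint on $\sigma$ holds. Concatenating Steps 1 and 2 proves the lemma.

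\noindent\textbf{Main obstacle.} The work is almost entirely book-keeping: one must verify that the two-sided window on $\sigma$ is non-empty under the stated hypotheses. The compatibility of the lower bound $s-l-1+\tfrac{3}{2r}$ from Step 1 with the upper bound $l+s-\tfrac{7}{4r}$ from Step 2 is equivalent to $2l+1>\tfrac{13}{4r}$, which is exactly $l>\tfrac{13}{8r}-\tfrac{1}{2}$; the auxiliary conditions $s\le l+1$ and $2l-s>\tfrac{7}{4r}-1$ guarantee $\max(s-1,s-l-1+\tfrac{3}{2r})\le\min(s,l)$ (noting $\tfrac{7}{4r}>\tfrac{3}{2r}$), so such $\sigma$ indeed exists.
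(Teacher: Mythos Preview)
Your argument is correct and follows essentially the same scheme as the paper: split the trilinear estimate into two bilinear ones, using Lemma~\ref{Lemma3'} (with mixed time weights $b$ and $\tfrac{1}{2r}+$) for the outer step and Lemma~\ref{Lemma3'c} for the inner step, then verify that the resulting window for the intermediate regularity parameter is non-empty precisely under the stated hypotheses. The only difference is the grouping: the paper isolates $\phi$ first and applies Lemma~\ref{Lemma3'c} to the symmetric product $A_jA^j$ (intermediate parameter $m$ on $A_jA^j$, with constraints $m\ge s-1$, $m>\tfrac{3}{2r}-1$, $m\le l$, $2l-m>\tfrac{7}{4r}$), whereas you isolate one copy of $A$ first and apply Lemma~\ref{Lemma3'c} to the mixed product $A_j\phi$. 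Both groupings lead to the same numerical conditions; the paper's is marginally cleaner only because the inner factor is symmetric in its two arguments, so the Leibniz bookkeeping there is simpler.
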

\begin{proof}
We obtain
$$\|uvw\|_{H^r_{s-1,0}} \lesssim \|u\|_{H^r_{s,b}} \|vw\|_{H^r_{m,\frac{1}{2r}}}
\lesssim \|u\|_{H^r_{s,b}} \|v\|_{H^r_{l,b}} \|w\|_{H^r_{l,b}}
\, . $$
For the first estimate we need $m+1 \ge s \ge  1$ and $1-s+s-m > \frac{3}{2r} $
, if we use Lemma \ref{Lemma3'} , thus $ m > \frac{3}{2r}-1$ . For the second
estimate we apply Lemma \ref{Lemma3'c}, which requires $ l \ge m$ and $2l-m >
\frac{7}{4r}$ . This implies the conditions $2l-(s-1)  > \frac{7}{4r}$
$\Leftrightarrow$ $2l-s > \frac{7}{4r}-1$ and $l > \frac{7}{8r} +
\frac{3}{4r}-\half = \frac{13}{8r}-\half$ .
\end{proof}

\begin{lemma}
\label{Lemma5}
Let $1<r\le 2$ . Assume $1 \le l \le s+1$ , $s >  \frac{13}{8r} - \half$ , $2s-l
> \frac{7}{4r} -1$ and $b >\frac{1}{r}$ . Then
$$ \| |\phi|^2 A_j \|_{H^r_{l-1,0}} \lesssim \|\phi\|_{H^r_{s,b}} 
\|\phi\|_{H^r_{s,b}} \|A_j\|_{H^r_{l,b}} \, . $$
\end{lemma}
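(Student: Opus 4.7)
The plan is to follow the structure of the proof of Lemma \ref{Lemma4}, interchanging the roles of $\phi$ and $A_j$ (equivalently, of the regularity indices $s$ and $l$). The present trilinear has the same shape, only the output regularity shifts from $s-1$ to $l-1$, so the single low-regularity factor $A_j$ now absorbs the output derivatives while the pair $\phi\,\overline{\phi}$ plays the role of the doubled factor.

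First I would split
\begin{equation*}
\| |\phi|^2 A_j \|_{H^r_{l-1,0}} \lesssim \|A_j\|_{H^r_{l,b}} \, \|\phi\overline{\phi}\|_{H^r_{m,\frac{1}{2r}+}} \lesssim \|A_j\|_{H^r_{l,b}} \, \|\phi\|_{H^r_{s,b}}\, \|\phi\|_{H^r_{s,b}}
\end{equation*}
for an auxiliary parameter $m$ to be chosen. The first inequality is a Sobolev-type product estimate, reduced by the fractional Leibniz rule to Lemma \ref{Lemma3'}; moving the $l-1$ derivatives onto the low-regularity factor $A_j$ (the hypothesis $l \le s+1$ is what makes this splitting available) requires $m+1 \ge l$ and $m > \frac{3}{2r}-1$. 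The second inequality applies Lemma \ref{Lemma3'c} to $\phi\,\overline{\phi}$ (the complex conjugate is inert in the Fourier-Lebesgue norms), producing by the fractional Leibniz rule the constraints $s \ge m$ and $2s - m > \frac{7}{4r}$.

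Next I would optimize over $m$. The choice $m = l-1$ is the smallest consistent with $m+1 \ge l$; substituting it into $2s-m > \frac{7}{4r}$ yields the hypothesis $2s-l > \frac{7}{4r}-1$, while the compatibility $s \ge m$ follows from $l \le s+1$. Pushing $m$ toward $\frac{3}{2r}-1$ in the constraint $s > \half(\frac{7}{4r}+m)$ reproduces $s > \frac{7}{8r}+\frac{3}{4r}-\half = \frac{13}{8r}-\half$, matching the hypothesis on $s$. The assumption $b > \frac{1}{r}$ is precisely what Lemmas \ref{Lemma3'} and \ref{Lemma3'c} demand.

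The main obstacle is purely bookkeeping: ensuring the window $\frac{3}{2r}-1 < m \le \min(s, l-1)$ is non-empty, which is exactly where the hypotheses on $s$ and on $2s-l$ become tight. No new bilinear analysis beyond what was developed for Lemma \ref{Lemma4} is needed, since the proof is just the mirror image of that argument.
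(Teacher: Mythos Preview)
Your proposal is correct and follows essentially the same approach as the paper: apply Lemma \ref{Lemma3'} to split off $A_j$ (requiring $m+1\ge l\ge 1$ and $m>\frac{3}{2r}-1$), then apply Lemma \ref{Lemma3'c} to the remaining product $\phi\overline{\phi}$ (requiring $s\ge m$ and $2s-m>\frac{7}{4r}$), and optimize over $m$ exactly as you describe. Your bookkeeping is in fact slightly cleaner than the paper's, which contains a minor arithmetic slip in the final line (writing $\frac{7}{4r}$ where $\frac{7}{8r}$ is meant).
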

\begin{proof}
We obtain
$$\|uvw\|_{H^r_{l-1,0}} \lesssim \|u\|_{H^r_{l,b}} \|vw\|_{H^r_{m,\frac{1}{2r}}}
\lesssim \|u\|_{H^r_{l,b}} \|v\|_{H^r_{s,b}} \|w\|_{H^r_{s,b}}
\, . $$
For the first estimate we need $m+1 \ge l \ge  1$ and $ m > \frac{3}{2r}-1$, if 
we use Lemma \ref{Lemma3'} . For the second estimate we apply Lemma
\ref{Lemma3'c} , which requires $ s \ge m$ and $2s-m > \frac{7}{4r}$ . This
implies the conditions $2s-(l-1)  > \frac{7}{4r}$ $\Leftrightarrow$ $2s-l >
\frac{7}{4r}-1$ and $s > \frac{7}{4r} + \frac{3}{4r}-\half$ .
\end{proof}

We come now to the estimates for the elliptic part  and start with $B_0$ .
\begin{lemma}
\label{Lemma6}
1. Assume $ s > 1 $ , $\phi \in C^0([0,T],H^s)$ , $A_j \in C^0([0,T],H^{s-1})$ .
Assume $B_0$ solves
 $$\Delta B_0 = - \partial^j Im(\phi \overline{\partial_j\phi}) +
\partial^j(|\phi|^2 A_j) \, . $$
Then $B_0  \in C^0([0,T],\dot{H}^{\sigma})$ for $0 < \sigma \le s$ and
$$
\|B_0\|_{L^{\infty}((0,T),\dot{H}^{\sigma})} \lesssim
\|\phi\|^2_{L^{\infty}((0,T),H^s)} (1+\sum_j\|A_j\|_{L^{\infty}((0,T),H^{s-1})})
\, .
$$
2. Assume $\half < s \le 1$ , $A_j \in C^0([0,T],L^2)$ . Then $B_0 \in
C^0([0,T],\dot{H}^{\sigma})$ for $0 < \sigma < 2s-1$ and
$$
\|B_0\|_{L^{\infty}((0,T),\dot{H}^{\sigma})} \lesssim
\|\phi\|^2_{L^{\infty}((0,T),H^s)} (1+\sum_j\|A_j\|_{L^{\infty}((0,T),L^2)}) \,
.
$$
\end{lemma}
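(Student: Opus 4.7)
The plan is to use the explicit solution formula $\widehat{B_0}(\xi) = -i\xi^j|\xi|^{-2}\widehat{F_j}(\xi)$ for the elliptic equation $\Delta B_0 = \partial^j F_j$ with $F_j := -Im(\phi\,\overline{\partial_j\phi}) + |\phi|^2 A_j$. This gives the elliptic gain
\[
\|B_0(t)\|_{\dot{H}^\sigma} \lesssim \sum_{j=1}^2 \|F_j(t)\|_{\dot{H}^{\sigma-1}},
\]
so the task reduces to estimating $\|F_j\|_{\dot{H}^{\sigma-1}}$ pointwise in $t$. Time-continuity of $B_0$ in $\dot{H}^\sigma$ will then follow from the assumed time-continuity of $\phi$ and $A_j$ together with the multilinear nature of $F_j$.

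For Part 1 ($s > 1$), the Sobolev embedding $H^s \hookrightarrow L^\infty$ in $\R^2$ makes $H^s$ a Banach algebra and yields $\|fg\|_{H^{s-1}} \lesssim \|f\|_{H^s}\|g\|_{H^{s-1}}$, hence
\[
\|F_j\|_{H^{s-1}} \lesssim \|\phi\|_{H^s}^2 \bigl(1 + \|A_j\|_{H^{s-1}}\bigr).
\]
Since $0 < \sigma \le s$, I split $\|F_j\|_{\dot{H}^{\sigma-1}}^2$ into $|\xi|\ge 1$ and $|\xi|\le 1$ contributions: the high-frequency piece is dominated by $\|F_j\|_{H^{s-1}}$, while the low-frequency piece (only relevant when $\sigma < 1$) is handled via Hausdorff--Young by $\|F_j\|_{L^1}\cdot\||\xi|^{\sigma-1}\|_{L^2(|\xi|\le 1)}$, and this weight is square-integrable near the origin in $\R^2$ precisely when $\sigma > 0$. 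The required $L^1$-bounds $\|\phi\,\overline{\partial_j\phi}\|_{L^1} \le \|\phi\|_{L^2}\|\partial_j\phi\|_{L^2}$ and $\||\phi|^2 A_j\|_{L^1} \le \|\phi\|_{L^4}^2\|A_j\|_{L^2}$ follow from H\"older and $H^s \hookrightarrow L^4$.

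For Part 2 ($\half < s \le 1$, $0 < \sigma < 2s-1$) the algebra property is lost and $A_j$ is only in $L^2$, so I replace it by the homogeneous Sobolev product estimate in $\R^2$,
\[
\dot{H}^{s_1}\cdot\dot{H}^{s_2} \hookrightarrow \dot{H}^{s_1+s_2-1} \qquad (s_1,s_2 < 1,\; s_1+s_2>0).
\]
For $\phi\,\overline{\partial_j\phi}$ choose $s_1 = s$ (replaced by $1-\varepsilon$ at the endpoint $s=1$) and $s_2 = \sigma - s_1$: then $s_1+s_2 = \sigma > 0$, $s_2+1 = \sigma - s_1 + 1 \in (0,s]$ because $s > \half$ forces $1-s \le \half < s$ and the strict inequality $\sigma < 2s-1$ gives $s_2 + 1 < s$, so $\|\partial_j\phi\|_{\dot{H}^{s_2}} = \|\phi\|_{\dot{H}^{s_2+1}} \lesssim \|\phi\|_{H^s}$ and the product rule delivers $\|\phi\,\overline{\partial_j\phi}\|_{\dot{H}^{\sigma-1}} \lesssim \|\phi\|_{H^s}^2$. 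For $|\phi|^2 A_j$, first apply the product rule to $\phi\cdot\phi$ with $s_1=s_2=(\sigma+1)/2 \in (\half,s)$ (possible exactly because $0 < \sigma < 2s-1$) to obtain $\|\phi^2\|_{\dot{H}^\sigma} \lesssim \|\phi\|_{H^s}^2$, then apply it once more with $t_1 = \sigma$, $t_2 = 0$ (valid since $0 < \sigma < 1$) to conclude $\||\phi|^2 A_j\|_{\dot{H}^{\sigma-1}} \lesssim \|\phi\|_{H^s}^2 \|A_j\|_{L^2}$.

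The main delicate point is checking that all exponents stay in the admissible range of the homogeneous product rule as $\sigma \to 2s-1$ or as $s \to 1$ (where $s_1 = s$ would violate $s_1 < n/2 = 1$); both issues are resolved by the strictness of the hypotheses $\sigma < 2s-1$ and $s > \half$, which provide the $\varepsilon$-margin needed to shift $s_1$ slightly below $1$ without affecting the conclusion.
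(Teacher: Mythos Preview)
Your proof is correct and follows the same overall reduction as the paper: invert the Laplacian to get $\|B_0\|_{\dot H^\sigma}\lesssim\sum_j\|F_j\|_{\dot H^{\sigma-1}}$ and then estimate the two pieces of $F_j$ pointwise in~$t$. The execution, however, differs from the paper in two places.

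In Part~1 the paper handles the small-$\sigma$ end by a second application of the homogeneous Sobolev multiplication law at the endpoint $\sigma-1=-1+\epsilon$, e.g.\ $\|\phi\,\overline{\partial_j\phi}\|_{\dot H^{-1+\epsilon}}\lesssim\|\phi\|_{\dot H^{1/2-\epsilon/2}}\|\partial_j\phi\|_{\dot H^{-1/2-\epsilon/2}}$, and then interpolates between $\sigma=0+$ and $\sigma=s$. Your $L^1$/Hausdorff--Young argument for the low-frequency part is a more elementary substitute: it avoids the second product estimate entirely and uses only the square-integrability of $|\xi|^{\sigma-1}$ near the origin for $\sigma>0$. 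This is a genuine simplification.

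In Part~2 the paper again argues at the two endpoints $\sigma=0+$ and $\sigma=2s-1-$, and for the term $\phi\,\overline{\partial_j\phi}$ it distinguishes whether the frequencies of $\partial_j\phi$ are large or small (since $\partial_j\phi\in H^{s-1}$ with $s-1\le 0$ cannot be placed directly in a positive-order homogeneous space). You instead apply the homogeneous product rule once for each fixed $\sigma$, placing $\phi$ in $\dot H^{s_1}$ and $\partial_j\phi$ in $\dot H^{\sigma-s_1}$; the key observation that $\sigma-s_1+1\in(0,s)$ lets you transfer the derivative back to $\phi$ and bound everything by $\|\phi\|_{H^s}$. This is more direct than the paper's frequency splitting and buys the full range of $\sigma$ in one stroke; the paper's endpoint-plus-interpolation route is slightly more robust in that it never needs $\partial_j\phi$ in a negative homogeneous space, but both approaches rely on the same product law $\dot H^{s_1}\cdot\dot H^{s_2}\hookrightarrow\dot H^{s_1+s_2-1}$.
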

\begin{proof}
1. We have to prove for $0< \sigma \le s$ :
\begin{equation}
\label{6''}
 \|\phi \overline{\partial_j \phi}\|_{\dot{H}^{\sigma-1}} \lesssim
\|\phi\|_{H^s}^2 \, . 
\end{equation}
For $s > 1$ the Sobolev multiplication law (SML) (\cite{T}, Proposition 3.15)
implies
$$ \|\phi \overline{\partial_j \phi}\|_{H^{s-1}} \lesssim \|\phi\|_{H^s}
\|\partial_j \phi\|_{H^{s-1}} \lesssim \|\phi\|_{H^s}^2 $$
and by the homogeneous version of SML we obtain for $s \ge \half$ :
$$ \|\phi \overline{\partial_j \phi}\|_{\dot{H}^{-1+\epsilon}} \lesssim
\|\phi\|_{\dot{H}^{\half - \frac{\epsilon}{2}}} \|\partial_j
\phi\|_{\dot{H}^{-\half-\frac{\epsilon}{2}}} \lesssim \|\phi\|_{H^s}^2 \, .$$
This implies (\ref{6''}).
Next we have to prove for $0< \sigma \le s$
\begin{equation}
\label{6'''}
\| |\phi|^2 A_j\|_{\dot{H}^{\sigma-1}} \lesssim \|\phi\|^2_{H^s}
\|A_j\|_{H^{s-1}} \, . 
\end{equation}
The SML implies for $s > 1$ :
$$ \| |\phi|^2 A_j\|_{\dot{H}^{s-1}} \lesssim \| |\phi|^2\|_{H^{s} }
\|A_j\|_{H^{s-1}} \lesssim \|\phi\|_{H^s}^2 \|A_j\|_{H^{s-1}} $$
and by the homogeneous SML :
$$ \| |\phi|^2 A_j\|_{\dot{H}^{-1+\epsilon}} \lesssim \|
|\phi|^2\|_{\dot{H}^{\frac{\epsilon}{2}} }
\|A_j\|_{\dot{H}^{\frac{\epsilon}{2}}} \lesssim \|\phi\|_{H^{\half+\epsilon}}^2
\|A_j\|_{\dot{H}^{\frac{\epsilon}{2}}}   \lesssim \|\phi\|_{H^s}^2
\|A_j\|_{H^{s-1}} \, ,$$
which implies (\ref{6'''}).\\
2. Next we prove (\ref{6''}) for $\half < s \le 1$ and $0 < \sigma < 2s-1$ . If
the frequencies of $\partial_j \phi$ are large, we obtain by the homogeneous SML

$$ \|\phi \overline{\partial_j \phi}\|_{\dot{H}^{-1+}} \lesssim
\|\phi\|_{\dot{H}^{1-s+}} \|\partial_j \phi\|_{\dot{H}^{s-1}} \lesssim
\|\phi\|_{H^s} \|\partial_j \phi\|_{H^{s-1}} $$ 
and 
$$ \|\phi \overline{\partial_j \phi}\|_{\dot{H}^{2s-2-}} \lesssim
\|\phi\|_{\dot{H}^{s-}} \|\partial_j \phi\|_{\dot{H}^{s-1}} \lesssim
\|\phi\|_{H^s} \|\partial_j \phi\|_{H^{s-1}} \, .$$ 
If the frequencies of $\partial_j \phi$ are small we obtain
$$ \|\phi \overline{\partial_j \phi}\|_{\dot{H}^{-1+}} \lesssim
\|\phi\|_{H^{0+}} \|\partial_j \phi\|_{L^2} \lesssim \|\phi\|_{H^s} \|\partial_j
\phi\|_{H^{s-1}} $$ 
and 
$$ \|\phi \overline{\partial_j \phi}\|_{\dot{H}^{2s-2-}} \lesssim
\|\phi\|_{\dot{H}^{2s-1-}} \|\partial_j \phi\|_{L^2} \lesssim
\|\phi\|_{H^{2s-1}} \|\partial_j \phi\|_{H^{s-1}}
\lesssim \|\phi\|_{H^s} \|\partial_j \phi\|_{H^{s-1}} \, .$$ 
Next we prove (\ref{6'''}). We obtain
$$ \| |\phi|^2 A_j\|_{\dot{H}^{2s-2-}} \lesssim \| |\phi|^2\|_{\dot{H}^{2s-1-} }
\|A_j\|_{L^2} \lesssim \|\phi\|_{H^{s-}}^2 \|A_j\|_{L^2}   \lesssim
\|\phi\|_{H^s}^2 \|A_j\|_{L^2} \, .$$
and
$$ \| |\phi|^2 A_j\|_{\dot{H}^{-1+}} \lesssim \| |\phi|^2\|_{\dot{H}^{0+} }
\|A_j\|_{L^2} \lesssim \|\phi\|_{H^{\half +}}^2 \|A_j\|_{L^2}   \lesssim
\|\phi\|_{H^s}^2 \|A_j\|_{L^2} \, .$$
\end{proof}
\begin{Cor}
\label{Cor.1}
Let $s$ and $\sigma$ be as in Lemma \ref{Lemma6}. Then $A_0 \in
C^0([0,T],\dot{H}^{\sigma})$ for $\sigma \le 1$ and
$$ \|A_0\|_{L^{\infty}((0,T),\dot{H}^{\sigma})} \lesssim T
\|\phi\|^2_{L^{\infty}((0,T),H^s)} ( 1 + \sum_j
\|A_j\|_{L^{\infty}((0,T),H^{\max(s-1,0)})} ) + \|a_0\|_{\dot{H}^{\sigma}} 
\,.$$
\end{Cor}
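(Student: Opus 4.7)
The plan is to exploit the explicit representation $A_0(t) = a_0 + \int_0^t B_0(s)\,ds$ given just before Lemma \ref{Lemma1}; by that lemma this function solves the elliptic equation (\ref{12}) for every $t \in [0,T]$, so the task reduces to controlling $a_0$ in $\dot{H}^\sigma$ and integrating the estimate for $B_0$ already proved in Lemma \ref{Lemma6}.

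First I would apply Minkowski's inequality in $\dot{H}^\sigma$ to the integral representation, obtaining
\begin{equation*}
\|A_0(t)\|_{\dot{H}^\sigma} \;\le\; \|a_0\|_{\dot{H}^\sigma} + \int_0^t \|B_0(s)\|_{\dot{H}^\sigma}\,ds \;\le\; \|a_0\|_{\dot{H}^\sigma} + T\,\|B_0\|_{L^\infty((0,T),\dot{H}^\sigma)}.
\end{equation*}
Taking $L^\infty_t$ on the left gives the form of the claimed estimate. Next I insert Lemma \ref{Lemma6}: if $s>1$ we use part~1 (valid for $0<\sigma\le s$, hence in particular for $\sigma\le 1$), and if $\tfrac{1}{2}<s\le 1$ we use part~2 (valid for $0<\sigma<2s-1 \le 1$). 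In both regimes the bound has the form $\|\phi\|^2_{L^\infty_t H^s}(1+\sum_j\|A_j\|_{L^\infty_t H^{\max(s-1,0)}})$, so the two cases can be stated uniformly as in the corollary.

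For the $a_0$ term I would invoke the definition of $a_0$ as the minimizer of $\int (|\nabla A_0|^2+|D_0\phi|^2)\,dx$ over $H^1(\mathbb{R}^2)$, which guarantees $a_0\in H^1$ and hence $\|a_0\|_{\dot{H}^\sigma}<\infty$ for every $0<\sigma\le 1$; this is precisely where the hypothesis $\sigma\le 1$ enters. The continuity statement $A_0\in C^0([0,T],\dot{H}^\sigma)$ then follows because $B_0\in C^0([0,T],\dot{H}^\sigma)$ (by Lemma \ref{Lemma6}) implies that $t\mapsto \int_0^t B_0(s)\,ds$ is Lipschitz into $\dot{H}^\sigma$, and adding the constant $a_0\in\dot{H}^\sigma$ preserves continuity.

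I do not expect a serious obstacle: the corollary is essentially a direct integration of Lemma \ref{Lemma6} together with the elliptic reconstruction in Lemma \ref{Lemma1}. The only point requiring minor care is matching the two regularity regimes of Lemma \ref{Lemma6} with a single estimate, and verifying that the homogeneous norm $\|a_0\|_{\dot H^\sigma}$ makes sense (which needs $\sigma>0$, already present in Lemma \ref{Lemma6}, together with $\sigma\le 1$ so that $H^1\hookrightarrow \dot H^\sigma$).
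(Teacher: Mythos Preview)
Your proposal is correct and follows essentially the same approach as the paper: the paper's proof simply recalls the representation $A_0(t)=\int_0^t B_0(s)\,ds+a_0$ and that $a_0\in H^1$ from the variational problem, which is exactly what you do, only with the Minkowski step and the continuity argument spelled out in more detail.
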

\begin{proof}
We only have to recall $A_0(t) := \int_0^t B_0(s) ds + a_0$ ,  and $a_0$ as the
solution of the variational problem considered before Lemma \ref{Lemma1} belongs
to $H^1$ .
\end{proof}
It is possible to improve the regularity of $A_0$ by using that $A_0$ solves the
elliptic equation (\ref{12}) by Lemma \ref{Lemma1}.
\begin{lemma}
\label{Lemma7}
1. If $s > 1$ and $\phi \in C^0([0,T],H^s) \cap C^1([0,T],H^{s-1})$ then $A_0
\in C^0((0,T),\dot{H}^{s+1})$ and the following estimate applies:
\begin{align*}
\|A_0\|_{L^{\infty}((0,T),\dot{H}^{s+1})}  &\lesssim 
\|\phi\|_{L^{\infty}((0,T),H^s)} \|\partial_t \phi\|_{L^{\infty}((0,T),H^{s-1})}
\\
& \quad + \|\phi\|_{L^{\infty}((0,T),H^s)}^2
\|A_0\|_{L^{\infty}((0,T),\dot{H}^{0+})} \, .
\end{align*}
2. If $\half < s \le 1$ and $A_0 \in C^0([0,T],\dot{H}^{\sigma})$ for some $0 <
\sigma < 2s-1$ , then
$A_0 \in C^0((0,T),\dot{H}^a)$ for $1 < a < 2s$ , and the following estimate
applies:
\begin{align*}
\|A_0\|_{L^{\infty}((0,T),\dot{H}^a)}  &\lesssim 
\|\phi\|_{L^{\infty}((0,T),H^s)} \|\partial_t \phi\|_{L^{\infty}((0,T),H^{s-1})}
\\
& \quad + \|\phi\|_{L^{\infty}((0,T),H^s)}^2
\|A_0\|_{L^{\infty}((0,T),\dot{H}^{\sigma})} \, .
\end{align*}
\end{lemma}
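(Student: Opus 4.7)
The plan is to exploit the elliptic equation (\ref{12}) from Lemma \ref{Lemma1}, namely $\Delta A_0=-\operatorname{Im}(\phi\,\overline{\partial_t\phi})+|\phi|^2A_0$. Since $\Delta^{-1}$ gains two homogeneous derivatives, we have
\begin{equation*}
\|A_0\|_{\dot H^{s+1}}\lesssim \|\Delta A_0\|_{\dot H^{s-1}}\le \|\phi\,\overline{\partial_t\phi}\|_{\dot H^{s-1}}+\||\phi|^2A_0\|_{\dot H^{s-1}}
\end{equation*}
for part 1, and analogously $\|A_0\|_{\dot H^{a}}\lesssim \|\Delta A_0\|_{\dot H^{a-2}}$ for part 2. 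Hence the lemma reduces to two multilinear estimates for the right-hand side.

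For the quadratic term the argument duplicates the computation (\ref{6''}) inside Lemma \ref{Lemma6}. When $s>1$, the inhomogeneous Sobolev multiplication law applied in $n=2$ (where $H^s$ is an algebra) yields $\|\phi\,\overline{\partial_t\phi}\|_{H^{s-1}}\lesssim \|\phi\|_{H^s}\|\partial_t\phi\|_{H^{s-1}}$, and the $\dot H^{s-1}$ low frequencies are controlled by the homogeneous SML exactly as in the $\dot H^{-1+\epsilon}$ estimate of Lemma \ref{Lemma6}. When $\half<s\le 1$, the high/low frequency splitting of $\partial_j\phi$ carried out in part 2 of that same lemma places the product in $\dot H^{a-2}$ for every $a<2s$, matching the asserted range $1<a<2s$. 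No new ideas are needed here.

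The harder step is $\||\phi|^2A_0\|_{\dot H^{s-1}}$ (resp.\ $\dot H^{a-2}$), which must close linearly in $\|A_0\|_{\dot H^{0+}}$ (resp.\ $\|A_0\|_{\dot H^\sigma}$). My plan is a Bony paraproduct decomposition $|\phi|^2A_0=T_{A_0}|\phi|^2+T_{|\phi|^2}A_0+R(|\phi|^2,A_0)$. The two terms in which $|\phi|^2$ carries the high frequency are summed dyadically: on scale $2^j$, Bernstein yields $\|S_{j-2}A_0\|_{L^\infty}\lesssim 2^{j(1-0+)}\|A_0\|_{\dot H^{0+}}$, which paired against the algebra bound $\||\phi|^2\|_{\dot H^s}\lesssim \|\phi\|_{H^s}^2$ (for $s>1$; for $\half<s\le 1$ replaced by the SML bound from Lemma \ref{Lemma3'} placing $|\phi|^2\in\dot H^{2s-1-}$) gives the required estimate after geometric summation. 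The remaining low-high paraproduct $T_{|\phi|^2}A_0$ is bounded by $\||\phi|^2\|_{L^\infty}\|A_0\|_{\dot H^{s-1}}$; the factor $\|A_0\|_{\dot H^{s-1}}$ is then controlled via the a priori bound from Corollary \ref{Cor.1} (available because $s-1\le 1$ in the regularity range of our theorems), or by interpolation between $\dot H^{0+}$ and $\dot H^{s+1}$ followed by Young's inequality to absorb the $\dot H^{s+1}$-factor into the left-hand side.

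The delicate point will be carrying out the absorption cleanly so that only $\|A_0\|_{\dot H^{0+}}$ (resp.\ $\|A_0\|_{\dot H^\sigma}$) and $\|\phi\|_{H^s}^2$ appear on the right. Once it closes, continuity in time of $A_0$ in the new norm follows from the continuity of $\phi,\partial_t\phi$ and the a priori continuity of $A_0$ in the weaker norm, combined with the multilinear estimates just established; the open interval $(0,T)$ in the statement accommodates a possible loss of regularity at $t=0$ coming from the variationally defined initial datum $a_0$.
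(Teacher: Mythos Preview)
Your overall strategy---invert $\Delta$ in the elliptic equation (\ref{12}) and estimate the two source terms---is exactly the paper's. The difference lies in the cubic piece: the paper disposes of $\||\phi|^2 A_0\|_{\dot H^{s-1}}$ in a single line via the homogeneous Sobolev multiplication law, writing
\[
\||\phi|^2 A_0\|_{\dot H^{s-1}}\lesssim\|A_0\|_{\dot H^{0+}}\,\||\phi|^2\|_{\dot H^{s-}}\lesssim\|A_0\|_{\dot H^{0+}}\,\|\phi\|_{H^s}^2,
\]
whereas you run a Bony decomposition and isolate $T_{|\phi|^2}A_0$ as the delicate part. Your route is in fact the more honest one: the bilinear inequality the paper quotes does not hold as a general product rule once $s>1$, precisely because the low-$|\phi|^2$/high-$A_0$ interaction would require $s-1\le 0{+}$. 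Your interpolation-and-absorption remedy is correct; it yields a higher power $\|\phi\|_{H^s}^{2/\theta}$ on the right instead of $\|\phi\|_{H^s}^{2}$, but that is harmless since Corollary~\ref{Cor.2} only needs a polynomial bound. Your alternative---feeding in $\|A_0\|_{\dot H^{s-1}}$ from Corollary~\ref{Cor.1}---also works whenever $s\le 2$ (so that $s-1$ lies in the range $\sigma\le 1$ supplied there), at the price of extra factors depending on $T$, $\|A_j\|$, and $\|a_0\|$, which are again irrelevant downstream. For part~2 the paper says only that it ``can be proven similarly as for Lemma~\ref{Lemma6}'', so your outline is at least as complete as the original.
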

\begin{proof}
1. By the elliptic equation (\ref{12}) we obtain
$$ \|A_0\|_{\dot{H}^{s+1}} \lesssim \|\phi \partial_t \phi\|_{\dot{H}^{s-1}} +
\| |\phi|^2 A_0\|_{\dot{H}^{s-1}} \, .$$
By the SML we obtain
$$  \|\phi \partial_t \phi\|_{\dot{H}^{s-1}} \lesssim \|\phi\|_{H^s}
\|\partial_t \phi\|_{H^{s-1}}$$ 
and 
$$  \| |\phi|^2 A_0\|_{\dot{H}^{s-1}} \lesssim \|A_0\|_{\dot{H}^{0+}} \|
|\phi|^2\|_{\dot{H}^{s-}} \lesssim \|A_0\|_{\dot{H}^{0+}}  \|\phi\|_{H^s}^2 \, .
$$
2. This can be proven similarly as for Lemma \ref{Lemma6}.
\end{proof}

Next we address the bilinear terms involving $A_0$ and $\partial_t A_0$ .

\begin{lemma}
\label{Lemma8}Let $1 < r \le 2$ , $s>1$ and $b> \frac{1}{r}$ .\\
1. If $s \le 2$ the following estimate applies:
$$\|A_0 \partial_t \phi\|_{H^r_{s-1,0}} \lesssim
\|A_0\|_{L^{\infty}((0,T),\dot{H}^{1+}\cap \dot{H}^{0+})} \|\partial_t
\phi\|_{H^r_{s-1,b}} \, . $$
2. If $s > 2$ the following inequality applies:
\begin{equation}
\label{21}
\|A_0 \partial_t \phi\|_{H^r_{s-1,0}} \lesssim
\|A_0\|_{L^{\infty}((0,T),\dot{H}^{s-1}\cap \dot{H}^{0+})} \|\partial_t
\phi\|_{H^r_{s-1,b}} \, .
\end{equation}
\end{lemma}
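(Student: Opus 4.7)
The plan is to uncouple the time and space roles in $\|A_0\partial_t\phi\|_{H^r_{s-1,0}}$ and reduce the claim to a purely spatial product estimate. Since the target norm carries no $\tau\pm|\xi|$ weight while $\partial_t\phi\in H^r_{s-1,b}[0,T]$ embeds into $C^0([0,T],\widehat{H}^{s-1,r})$ for $b>\frac{1}{r}$ (see the remark after Proposition~\ref{Prop.0.1}), I would first apply Hausdorff--Young in time (valid for $r\le 2$) followed by Minkowski's inequality to obtain
\[
\|u\|_{H^r_{s-1,0}}\lesssim \|u\|_{L^r_t\widehat{H}^{s-1,r}_x}\lesssim T^{\frac{1}{r}}\|u\|_{L^\infty_t\widehat{H}^{s-1,r}_x},
\]
reducing matters to the fixed-time bound $\|A_0 f\|_{\widehat{H}^{s-1,r}_x}\lesssim \|A_0\|_{\dot{H}^{0+}\cap\dot{H}^{\sigma}}\|f\|_{\widehat{H}^{s-1,r}_x}$, with $\sigma=1+$ in case~1 and $\sigma=s-1$ in case~2.

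For the spatial estimate I would pass to the Fourier side, write $\widehat{A_0 f}=\widehat{A_0}\ast\widehat{f}$, and distribute the weight by $\langle\xi\rangle^{s-1}\lesssim\langle\xi-\eta\rangle^{s-1}+\langle\eta\rangle^{s-1}$. The bridge between the $L^2$-based norm on $A_0$ and the $L^{r'}$-based target is provided by the two-dimensional Cauchy--Schwarz estimates
\[
\|\widehat{A_0}\chi_{|\eta|\le 1}\|_{L^1}\lesssim \|A_0\|_{\dot{H}^{0+}},\qquad \||\eta|^{\gamma}\widehat{A_0}\chi_{|\eta|>1}\|_{L^1}\lesssim \|A_0\|_{\dot{H}^{\gamma+1+}},
\]
which are sharp in dimension two. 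Applying Young's convolution inequality $L^1\ast L^{r'}\subset L^{r'}$ then handles both the regime where the weight $\langle\xi\rangle^{s-1}$ moves onto $\widehat{f}$ (absorbed by $\|A_0\|_{\dot{H}^{0+}\cap\dot{H}^{1+}}$) and, in case~2, the regime where it rides on $\widehat{A_0}$ (absorbed by the second estimate with $\gamma=s-1$). This yields case~2 at once.

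Case~1 with $1<s\le 2$ is the delicate one, because the direct argument would demand $\|A_0\|_{\dot{H}^{s+}}$, strictly stronger than $\dot{H}^{1+}$ whenever $s>1$. To repair this I would exploit that in the bad regime $|\xi-\eta|\gg|\eta|$ the $\eta$-integration is supported on a ball of radius $M\ll|\xi|$, and use the H\"older bound $\|\widehat{f}\|_{L^1(|\eta|\le M)}\lesssim M^{\frac{2}{r}-(s-1)}\|f\|_{\widehat{H}^{s-1,r}}$ (valid because $s-1\le 1$). Young's variant $L^p\ast L^q\subset L^{r'}$ with $\frac{1}{p}+\frac{1}{q}=1+\frac{1}{r'}$ and $p>1$ then transfers a fractional derivative from $A_0$ onto $\widehat{f}$, bringing the demand on $A_0$ down to exactly $\dot{H}^{1+}$. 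The main obstacle is this last bookkeeping: the threshold $s=2$ enters precisely via the borderline $s-1=1$, at which $\int_{|\eta|\le M}\langle\eta\rangle^{-(s-1)}d\eta\sim M^{2-(s-1)}$ in two dimensions, and all the resulting powers of $M$ must cancel exactly in order for the argument to close.
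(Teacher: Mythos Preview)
Your reduction via Hausdorff--Young plus Minkowski to a fixed-time spatial product estimate is sound and is a clean alternative to the paper's route: the paper keeps the full space-time convolution and handles the $\tau$-integration by Young $L^2_\tau\ast L^{\tilde r}_\tau\subset L^{r'}_\tau$ together with $\|\langle|\tau|-|\xi|\rangle^{-b}\|_{L^2_\tau}<\infty$, so time and space are never separated. Both approaches are legitimate; yours has the advantage of isolating a purely spatial inequality, at the cost of ``wasting'' the $b$-weight (you only use $b>\frac1r$ through the embedding into $C^0_t$, not through any modulation gain).

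There is, however, a genuine gap in your treatment of case~2. If the weight $\langle\eta\rangle^{s-1}$ sits on $\widehat{A_0}$ and you insist on Young $L^1\ast L^{r'}\subset L^{r'}$, you are forced to control $\|\langle\eta\rangle^{s-1}\widehat{A_0}\|_{L^1(|\eta|>1)}$, and by your own Cauchy--Schwarz bound this costs $\|A_0\|_{\dot H^{s+}}$, strictly more than the $\dot H^{s-1}$ the lemma allows. The fix is actually simpler than your case~1 repair and is what the paper does: switch to Young $L^2\ast L^q\subset L^{r'}$ with $\frac1q=\frac12+\frac1{r'}$, place $\langle\eta\rangle^{s-1}\widehat{A_0}$ in $L^2$ (which is exactly $\|A_0\|_{\dot H^{s-1}}$ at high frequencies), and bound $\|\widehat f\|_{L^q}\le\|\langle\cdot\rangle^{-(s-1)}\|_{L^2}\,\|\langle\cdot\rangle^{s-1}\widehat f\|_{L^{r'}}$ by H\"older. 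The point is that $\langle\cdot\rangle^{-(s-1)}\in L^2(\mathbb R^2)$ precisely when $s>2$, so the threshold in the statement is exactly where this argument becomes available. Your case~1 sketch (dyadic Young with $p>1$, borrowing regularity from the localized $\widehat f$) is correct in outline; the paper carries out the analogous estimate with explicit H\"older exponents $L^{\frac{2}{2-s}-}_\xi$ and $L^{\frac{2}{s-1}+}_\xi$ rather than a dyadic sum, but the content is the same and the powers do close.
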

\begin{proof}
1. We want to use the fractional Leibniz rule and estimate by H\"older and Young
:
\begin{align*}
\|A_0 \Lambda^{s-1} \partial_t \phi\|_{H^r_{0,0}} &= \|\widehat{A_0
\Lambda^{s-1} \partial_t \phi}\|_{L^{r'}_{\tau \xi}} \\
&\lesssim \|\widehat{A_0}\|_{L^1_{\xi} L^2_{\tau}} \| \|\langle|\tau|-|\xi|
\rangle^{-b}\|_{L^2_\tau} \| \langle |\tau|-|\xi| \rangle^b
\widehat{\Lambda^{s-1} \partial_t \phi}\|_{L^{r'}_{\tau}} \|_{L^{r'}_\xi} \\
&\lesssim \|\widehat{A_0}\|_{L^1_{\xi}L^2_{\tau}} \|\partial_t
\phi\|_{H^r_{s-1,b}} \, .
\end{align*}
Now for high frequencies of $A_0$ we obtain
$$\|\widehat{A_0}\|_{L^1_{\xi}L^2_{\tau}} \lesssim \| \langle \xi
\rangle^{-1-}\|_{L^2_\xi} \| \langle \xi \rangle^{1+} \widehat{A_0}\|_{L^2_\tau
L^2_\xi} \lesssim \|A_0\|_{L^2(0,T),\dot{H}^{1+})} \lesssim
\|A_0\|_{L^{\infty}((0,T),\dot{H}^{1+})}$$
and for low frequencies of $A_0$ :
$$ \|\widehat{A_0}\|_{L^1_{\xi}L^2_{\tau}} \lesssim \| |\xi |^{0-}\|_{L^2_{|\xi|
\le 1}} \| | \xi |^{0+} \widehat{A_0}\|_{L^2_\xi L^2_\tau}\lesssim
\|A_0\|_{L^{\infty}((0,T),\dot{H}^{0+})} \, .$$
Thus we obtain
$$\|A_0 \Lambda^{s-1} \partial_t \phi\|_{H^r_{0,0}} \lesssim
\|A_0\|_{L^{\infty}((0,T),\dot{H}^{1+}\cap \dot{H}^{0+})} \|\partial_t
\phi\|_{H^r_{s-1,b}} \, . $$
Next we estimate $\|\Lambda^{s-1} A_0  \partial_t \phi\|_{H^r_{0,0}}$ . For low
frequencies we obtain as before 
$$ \|\Lambda^{s-1} A_0  \partial_t \phi\|_{H^r_{0,0}} \lesssim
\|A_0\|_{L^{\infty}((0,T),\dot{H}^{1+} \cap \dot{H}^{0+})} \|\partial_t
\phi\|_{H^r_{s-1,b}} $$
and for high frequencies of $A_0$ we obtain by H\"older and Young:
\begin{align*}
&\| \widehat{\Lambda^{s-1} A_0 \partial_t \phi}\|_{L^{r'}_{\tau \xi}} \\
& \lesssim \| \langle \xi \rangle^{s-2-}\|_{L_{\xi}^{\frac{2}{2-s}-}} \| \langle
\xi \rangle^{2-s+} \widehat{A_0}\|_{L^2_\xi L^2_{\tau}} \|\langle \xi
\rangle^{-(s-1)} \langle |\tau|-|\xi| \rangle^{-b} \|_{L_{\xi}^{\frac{2}{s-1}+}
L^2_{\tau}}  \times  \\
&\qquad \times\| \langle \xi \rangle^{s-1} \langle |\tau|-|\xi| \rangle^{b}
\widehat{\partial_t \phi}\|_{L^{r'}_{\tau \xi}} \\
&\lesssim \|A_0\|_{L^{\infty}((0,T),\dot{H}^{1+})} \|\widehat{\partial_t
\phi}\|_{H^r_{s-1,b}} \, , 
\end{align*}
because $1+\frac{1}{r'} = \frac{2-s}{2}+\half + \frac{s-1}{2} + \frac{1}{r'}$ .
By the fractional  Leibniz rule we obtain the claimed estimate. \\
2. If  $s>2$  we handle the integration with respect to $\tau$ as in 1. We
obtain for high frequencies of $A_0$ :
\begin{align*}
\| \widehat{\Lambda^{s-1} A_0 \partial_t \phi}\|_{L^{r'}_{\xi}} & \lesssim
\|\widehat{\Lambda^{s-1} A_0}\|_{L^2_{\xi}} \| \langle \xi
\rangle^{-(s-1)}\|_{L^2_{\xi}} \| \langle \xi \rangle^{s-1} \widehat{\partial_t
\phi}\|_{L^{r'}_{\xi}} \\
& \lesssim \|A_0\|_{\dot{H}^{s-1}} \|\widehat{\Lambda^{s-1}\partial_t
\phi}\|_{L^{r'}_{\xi}} 
\end{align*}
and
\begin{align*}
\| A_0 \partial_t \Lambda^{s-1} \phi\|_{{L^{r'}_{\xi}}} &  \lesssim  \| \langle
\xi \rangle^{-(s-1)}\|_{L^2_{\xi}} \|\langle \xi
\rangle^{s-1}\widehat{A_0}\|_{L^2_{\xi}} \|  \widehat{\Lambda^{s-1}{\partial_t
\phi}}\|_{L^{r'}_{\tau \xi}} \\
& \lesssim \|A_0\|_{\dot{H}^{s-1}} \|\widehat{\Lambda^{s-1}\partial_t
\phi}\|_{{L^{r'}_{\xi}}} \, ,
\end{align*}
so that the fractional Leibniz rule implies (\ref{21}). Low frequencies of $A_0$
are handled as in part 1. 
\end{proof}

\begin{lemma}
\label{Lemma9}
Let $1 < r \le 2 $ and $s > 1$ . Then
\begin{equation}
\label{22}
\|(\partial_t A_0) \phi\|_{H^r_{s-1,0}} \lesssim \|\partial_t
A_0\|_{L^{\infty}((0,T),\dot{H}^{0+} \cap\dot{H}^{s-1})} \|\phi\|_{H^r_{s,b}}\,
.
\end{equation} 
\end{lemma}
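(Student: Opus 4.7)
The plan is to adapt the proof of Lemma \ref{Lemma8} with $\partial_t A_0$ playing the role of $A_0$ and $\phi$ playing the role of $\partial_t\phi$, exploiting the fact that $\phi$ is one spatial derivative smoother than $\partial_t\phi$ was, which will compensate for the fact that $\partial_t A_0$ is given only in $\dot H^{s-1}$ rather than $\dot H^{1+}$. Since $s-1>0$, the elementary splitting $\langle\xi\rangle^{s-1}\lesssim\langle\eta\rangle^{s-1}+\langle\xi-\eta\rangle^{s-1}$ reduces the estimate to the two pieces
$$\|(\Lambda^{s-1}\partial_t A_0)\,\phi\|_{H^r_{0,0}} \quad\text{and}\quad \|(\partial_t A_0)\,\Lambda^{s-1}\phi\|_{H^r_{0,0}}.$$

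For the first piece, I would follow the template of Lemma \ref{Lemma8} part 2: in the convolution $\widehat{\Lambda^{s-1}\partial_t A_0}\ast\widehat{\phi}$ apply a three-factor H\"older $L^2\cdot L^2\cdot L^{r'}$ in $\xi$, inserting the Sobolev weight $\langle\xi\rangle^{-s}$, which lies in $L^2_\xi(\mathbb{R}^2)$ exactly because $s>1$. Combined with the $\tau$-H\"older step (using $b>1/r\ge 1/2$ so that $\|\langle|\tau|-|\xi|\rangle^{-b}\|_{L^2_\tau}\lesssim 1$), this yields the desired bound with $\|\partial_t A_0\|_{L^\infty_t\dot H^{s-1}}$ controlling the high-frequency part of $\Lambda^{s-1}\partial_t A_0$ and $\|\partial_t A_0\|_{L^\infty_t\dot H^{0+}}$ the low-frequency part.

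For the second piece, the naive approach of Lemma \ref{Lemma8} part 1 gives
$$\|(\partial_t A_0)\,\Lambda^{s-1}\phi\|_{H^r_{0,0}} \lesssim \|\widehat{\partial_t A_0}\|_{L^1_\xi L^2_\tau}\,\|\phi\|_{H^r_{s-1,b}}$$
via Young $L^1\ast_\xi L^{r'}\to L^{r'}$ together with the $\tau$-H\"older step; the standard high/low frequency split then cleanly bounds $\|\widehat{\partial_t A_0}\|_{L^1_\xi L^2_\tau}$ by $T^{1/2}\|\partial_t A_0\|_{L^\infty_t(\dot H^{0+}\cap\dot H^{s-1})}$ provided $s>2$. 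In the regime $1<s\le 2$, I would instead replace the Young exponents $(1,r')$ by $(q_1,q_2)$ satisfying $1/q_1+1/q_2=1+1/r'$, placing $\widehat{\partial_t A_0}$ into a weaker $L^{q_1}_\xi L^2_\tau$ (accessible from $\dot H^{s-1}\cap\dot H^{0+}$) against a stronger $L^{q_2}_\xi$ bound for $\widehat{\Lambda^{s-1}\phi}$; the latter is extracted from the full $\widehat{H}^{s,r}$-regularity of $\phi$ by a weighted-H\"older step that again relies on the finite integral $\|\langle\xi\rangle^{-s}\|_{L^2_\xi}<\infty$.

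The main obstacle is precisely this subcase $1<s\le 2$ of the second piece: any argument that places $\Lambda^{s-1}$ entirely on $\phi$ and controls $\partial_t A_0$ through $L^1_\xi$ alone is forced by the two-dimensional Sobolev embedding to demand $s>2$, and reaching the sharper threshold $s>1$ stated in the lemma is what requires using the extra $\langle\xi\rangle$-weight sitting in $\|\phi\|_{H^r_{s,b}}$ (versus the $H^r_{s-1,b}$ that appeared in Lemma \ref{Lemma8}), with the condition $s>1$ emerging as the sharpness of $\int_{\mathbb{R}^2}\langle\xi\rangle^{-2s}\,d\xi<\infty$.
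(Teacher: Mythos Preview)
Your proposal is correct and follows the same overall scheme as the paper: fractional Leibniz split into $(\Lambda^{s-1}\partial_t A_0)\phi$ and $(\partial_t A_0)\Lambda^{s-1}\phi$, combined with a Young/H\"older step in $\xi$ and the $\tau$--H\"older step using $b>\tfrac{1}{r}$. Your treatment of the first piece is exactly the paper's.

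For the second piece, however, the paper's argument is simpler and avoids your case distinction $s>2$ versus $1<s\le 2$. The paper notes that for \emph{all} $s>1$ one may control the high-frequency part of $\partial_t A_0$ using only the $\dot H^{0+}$ norm (not $\dot H^{s-1}$): writing
\[
\|\widehat{(\partial_t A_0)\Lambda^{s-1}\phi}\|_{L^{r'}_\xi}
\lesssim \|\langle\xi\rangle^{0-}\|_{L^{\infty-}_\xi}\,\|\langle\xi\rangle^{0+}\widehat{\partial_t A_0}\|_{L^2_\xi}\,\|\langle\xi\rangle^{-1}\|_{L^{2+}_\xi}\,\|\langle\xi\rangle\,\widehat{\Lambda^{s-1}\phi}\|_{L^{r'}_\xi},
\]
the extra spatial derivative available on $\phi$ (compared to Lemma~\ref{Lemma8}) is absorbed by the weight $\langle\xi\rangle^{-1}\in L^{2+}_\xi(\mathbb{R}^2)$, and no further use of $s>1$ is needed here. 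Your route through adjusted Young exponents also works, and your heuristic that the combined weight amounts to $\langle\xi\rangle^{-s}\in L^2$ is correct in aggregate, but the paper's choice makes the uniformity in $s$ transparent and shows that the $\dot H^{s-1}$ control of $\partial_t A_0$ is in fact only needed for the \emph{first} Leibniz piece.
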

\begin{proof} We handle the $\tau$-integration as in the previous lemma.
We apply the fractional Leibniz rule and estimate first for high frequencies of
$\partial_t A_0$ :
\begin{align*}
\|\widehat{(\Lambda^{s-1}\partial_t A_0) \phi} \|_{L^{r'}_{\xi}}  
\lesssim \|\widehat{\Lambda^{s-1} \partial_t A_0}\|_{L^2_{\xi}} \| \langle \xi
\rangle^{-s}\|_{L^2_{\xi}} \| \langle \xi \rangle^s \widehat{\phi}
\|_{L^{r'}_{\xi}} 
\lesssim \|\partial_t A_0\|_{\dot{H}^{s-1}} \|\widehat{\Lambda^s \phi}
\|_{L^{r'}_{\xi}} 
\end{align*}
and for low frequencies of $\partial_t A_0$ :
\begin{align*}
\| \widehat{(\Lambda^{s-1} \partial_t A_0) \phi}\|_{L^{r'}_{\xi}} &\lesssim \|
|\xi|^{0-}\|_{L^2_{|\xi| \le 1}} \| |\xi|^{0+} \widehat{\partial_t
A_0}\|_{L^2_{\xi}} \|\phi\|_{L^{r'}_{\xi}} \\
& \lesssim \| \partial_t A_0 \|_{\dot{H}^{0+}} \|\phi\|_{L^{r'}_{\xi}} \, .
\end{align*}
Moreover we obtain for high frequencies of $\partial_t A_0$ :
\begin{align*}
\|\widehat{(\partial_t A_0) \Lambda^{s-1} \phi}\|_{L^{r'}_{\xi}} &
\lesssim \| \langle \xi \rangle^{0-}\|_{L^{\infty-}_{\xi}} \| \langle \xi
\rangle^{0+} \widehat{\partial_t A_0}\|_{L^2_{\xi}} \| \langle \xi
\rangle^{-1}\|_{L^{2+}_{\xi}} \| \langle \xi \rangle \Lambda^{s-1}
\phi\|_{L^{r'}_{\xi}} \\
& \lesssim\| \partial_t A_0\|_{\dot{H}^{0+}} \|\widehat{\Lambda^s \phi}
\|_{L^{r'}_{\xi}}\, ,	
\end{align*}
whereas for low frequencies of $\partial_t A_0$ we obtain
\begin{align*}
\| \widehat{( \partial_t A_0) \Lambda^{s-1} \phi}\|_{L^{r'}_{\xi}} &\lesssim \|
|\xi|^{0-}\|_{L^2_{|\xi| \le 1}} \| |\xi|^{0+} \widehat{\partial_t
A_0}\|_{L^2_{\xi}} \|\Lambda^{s-1} \phi\|_{L^{r'}_{\xi}} \\
& \lesssim \| \partial_t A_0 \|_{\dot{H}^{0+}}  \|\Lambda^{s-1}
\phi\|_{L^{r'}_{\xi}}  \, .
\end{align*}
\end{proof}

Finally we treat the cubic term involving $A_0$ .
\begin{lemma}
\label{Lemma10}
Let $1 < r \le 2$ , $s > 1$ and $b >\frac{1}{r}$ . Then the following estimate
applies:
$$ \|A_0^2 \phi\|_{H^r_{s-1,0}} \lesssim
\|A_0\|^2_{L^{\infty}((0,T),\dot{H}^{0+} \cap \dot{H}^{s-\half+})}
\|\phi\|_{H^r_{s,b}} \, . $$
\end{lemma}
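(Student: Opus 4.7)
The structure will parallel that of Lemma \ref{Lemma8} and Lemma \ref{Lemma9}: since $A_0$ is only given as an $L^\infty$-in-time function taking values in a spatial Sobolev space, the $\tau$-variable on the left-hand side must be handled by a H\"older--Young argument on the $\phi$ factor, exploiting that $b > \frac{1}{r} > \frac{1}{2}$ makes $\| \langle |\tau|-|\xi|\rangle^{-b}\|_{L^2_\tau}$ finite. The spatial frequency is then controlled by distributing the derivative $\Lambda^{s-1}$ via the fractional Leibniz rule onto $A_0^2$ and $\phi$ and invoking the (homogeneous) Sobolev multiplication law in $\R^2$ for the product $A_0 \cdot A_0$.

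Concretely, I would first reduce to the decomposition
$$ \Lambda^{s-1}(A_0^2\,\phi) \sim \Lambda^{s-1}(A_0^2)\,\phi\; +\; A_0^2\,\Lambda^{s-1}\phi, $$
and estimate each piece separately in $H^r_{0,0}$. In both pieces I would split each $A_0$ factor into its low-frequency part ($|\xi| \le 1$), controlled by the $\dot{H}^{0+}$ norm through $\||\xi|^{0-}\|_{L^2_{|\xi|\le 1}} < \infty$, and its high-frequency part, controlled by the $\dot{H}^{s-\half+}$ norm combined with $\|\langle \xi\rangle^{-(s-\half+)}\|_{L^2_\xi} < \infty$ (which converges since $s > 1$, hence $s - \half > \half$). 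The factor $A_0^2$ is then estimated in a suitable spatial $L^p$ or $\dot H^\sigma$ norm via the homogeneous SML in $\R^2$: the required bound $\|A_0^2\|_{\dot H^\sigma} \lesssim \|A_0\|^2_{\dot H^{0+}\cap\dot H^{s-\half+}}$ for $0+\le\sigma\le s-1$ follows by a case analysis on low/high frequencies of each $A_0$-factor, using that $2(s-\half+) > s$ when $s > 1$. Pairing with $\phi$ (or with $\Lambda^{s-1}\phi$) via the $L^1_\xi L^2_\tau$-type estimate already used in the proofs of Lemma \ref{Lemma8} and Lemma \ref{Lemma9} completes each piece.

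\textbf{Main obstacle.} The delicate regime is $1 < s \le \frac{3}{2}$: here $\dot H^{s-\half+}(\R^2)$ does not embed into $L^\infty$ (one needs $s-\half > 1$), so a crude $L^\infty$ bound on $A_0^2$ is unavailable and one cannot simply ``pull $A_0^2$ out''. Instead one has to invoke SML in a Besov/Sobolev form for each of the two $A_0$-factors, carefully matching the splits low/high for each factor with the two norms $\dot H^{0+}$ and $\dot H^{s-\half+}$ available in the hypothesis. This is exactly the same technical difficulty encountered in part 1 of the proof of Lemma \ref{Lemma8}, and the solution is modeled on it; once the bilinear factor $A_0^2$ is in place, the trilinear estimate follows by a direct repetition of the H\"older--Young argument.
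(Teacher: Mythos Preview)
Your proof plan is correct and follows the same overarching strategy as the paper: handle the $\tau$-variable by a H\"older--Young argument using $b>\frac1r>\frac12$, then treat the spatial part by the fractional Leibniz rule and a Sobolev product estimate on $A_0^2$.

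The only real difference is in how the bilinear factor $A_0^2$ is estimated. The paper bypasses the case analysis you flag as the ``main obstacle'' by going through $L^4$: in $\R^2$ one has $\dot H^{1/2}\hookrightarrow L^4$, so
\[
\|\Lambda^{0+}(A_0^2)\|_{L^2_{xt}}\lesssim \|A_0\|_{L^4_{xt}}\|\Lambda^{0+}A_0\|_{L^4_{xt}}
\lesssim \|A_0\|_{L^\infty_t\dot H^{1/2}}\,\|A_0\|_{L^\infty_t(\dot H^{1/2}\cap\dot H^{1/2+})},
\]
and since $s>1$ implies $0<\tfrac12<s-\tfrac12$, both $\dot H^{1/2}$ and $\dot H^{1/2+}$ lie between $\dot H^{0+}$ and $\dot H^{s-\frac12+}$ and are thus controlled by the intersection norm in the statement. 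This yields the base estimate $\|A_0^2\phi\|_{H^r_{0,0}}\lesssim \|A_0\|^2\,\|\phi\|_{H^r_{1,b}}$ directly, after which the fractional Leibniz rule finishes. In particular no $L^\infty$ bound on $A_0$ is ever needed, and the regime $1<s\le\frac32$ requires no separate treatment. Your route via an explicit low/high frequency split on each $A_0$-factor and the homogeneous SML also works, but is more laborious than the $L^4\times L^4\to L^2$ shortcut.
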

\begin{proof}
By Young's and H\"older's inequality we obtain
\begin{align*}
 \|\widehat{A_0^2 \phi}\|_{L^{r'}_{\tau\xi}} &\lesssim
\|\widehat{A_0^2}\|_{L^p_{\xi} L^2_{\tau}} \| \|\langle |\tau|-|\xi|
\rangle^{-b} \|_{L^2_{\tau}} \| \langle |\tau|-|\xi| \rangle^b     
\widehat{\phi}\|_{L^{r'}_{\tau}} \|_{L^q_{\xi}} \\
 &\lesssim \|\widehat{A_0^2}\|_{L^p_{\xi} L^2_t}\| \langle \xi
\rangle^{-1}\|_{L^{2+}_{\xi}} \| \langle \xi \rangle \langle |\tau|-|\xi|
\rangle^b      \widehat{\phi}\|_{L^{r'}_{\tau \xi}} \\
& \lesssim \| \langle \xi \rangle^{0-}\|_{L^{\infty-}_{\xi}}      \|\langle \xi
\rangle^{0+}\widehat{A_0^2}\|_{L^2_{\xi \tau}}           \|\phi\|_{H^r_{1,b}}   
    \\
& \lesssim \| \Lambda^{0+}(A_0^2)\|_{L^2_{xt}} \|\phi\|_{H^r_{1,b}} \\
& \lesssim \|A_0\|_{L^4_{xt}} \|\Lambda^{0+}A_0\|_{L^4_{xt}}
\|\phi\|_{H^r_{1,b}} \\
& \lesssim \|A_0\|_{L^{\infty}((0,T),\dot{H}^{\half})}  
\|A_0\|_{L^{\infty}((0,T),\dot{H}^{\half} \cap \dot{H}^{\half+})} 
\|\phi\|_{H^r_{1,b}}\, , 
\end{align*}
where  $\frac{1}{q}= \frac{1}{r'}+\half-$ and $\frac{1}{p}= \half +$ .
An application of the fractional Leibniz rule implies the claimed result.
\end{proof}

In the following we prove the necessary estimates in the case of data in
$L^2$-based Sobolev spaces.

The following bilinear estimates for wave-Sobolev spaces were proven in
\cite{AFS}, Lemma 7.
\begin{prop}
\label{Prop.7.1}
	Let $s_0,s_1,s_2 \in \R$ , $b_0,b_1,b_2 \ge 0$ . Assume that
	 \begin{align*}
	 b_0+b_1+b_2 &> \half \\
	 s_0+s_1+s_2 &>\frac{3}{2}-(b_0+b_1+b_2) \\
	 s_0+s_1+s_2 &> 1 - \min_{i \neq j} (b_i+b_j) \\
	s_0+s_1+s_2 &> \half - \min_i b_i  \\
	s_0+s_1+s_2& > 1-\min(b_0+s_1+s_2,s_0+b_1+s_2,s_0+s_1+b_2) \\
	s_0+s_1+s_2 & \ge \frac{3}{4} \\
	\min_{i \neq j} (s_i+s_j) & \ge 0 \, ,
	\end{align*}
where the last two inequalities are not both equalities. Then the following
estimate applies:
$$ \|uv\|_{H^{-s_0,-b_0}} \lesssim \|u\|_{H^{s_1,b_1}} \|v\|_{H^{s_2,b_2}} \,
.$$
If $b_0 <0$ , this remains true provided we  additionally assume $b_0+b_1 > 0$ ,
$b_0+b_2 > 0$ and $s_1+s_2 > -b_0$ .
\end{prop}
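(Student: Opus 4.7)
The plan is to reduce the bilinear bound to a trilinear form by duality and then perform a systematic dyadic analysis. By pairing with a test function $w$ of unit norm in the dual space $H^{s_0,b_0}$, the claim becomes the trilinear estimate
$$ \Big| \iint \tilde{u}(\tau_1,\xi_1)\,\tilde{v}(\tau_2,\xi_2)\,\tilde{w}(\tau_0,\xi_0)\,d\mu \Big| \lesssim \|u\|_{H^{s_1,b_1}}\|v\|_{H^{s_2,b_2}}\|w\|_{H^{s_0,b_0}}, $$
where $d\mu$ enforces $\tau_0+\tau_1+\tau_2=0$ and $\xi_0+\xi_1+\xi_2=0$. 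After absorbing the weights into densities $F_i = \langle \xi_i\rangle^{s_i}\langle|\tau_i|-|\xi_i|\rangle^{b_i}|\tilde{u}_i|$ that are merely required to have unit $L^2$-norm, the problem is to bound a convolution-type integral whose kernel is the product of $\langle\xi_i\rangle^{-s_i}\langle|\tau_i|-|\xi_i|\rangle^{-b_i}$ restricted to the cone configuration.

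Next, I would perform simultaneous dyadic decompositions $|\xi_i|\sim N_i$ and $\langle|\tau_i|-|\xi_i|\rangle\sim L_i$ and split $u=u_++u_-$ via the projectors onto $\tau\approx\pm|\xi|$, reducing matters to a finite family of estimates indexed by the sign configurations $(\pm_0,\pm_1,\pm_2)$. In the elliptic case (all signs equal) the geometry is rigid and Foschi--Klainerman Lemma 4.3 controls the relevant resonance measure; in the hyperbolic case one invokes Lemmas 4.4 and 4.5 of \cite{FK}, as already used in Lemmas \ref{Lemma5.1} and \ref{Lemma5.6} above. Applying Cauchy--Schwarz in the convolution variable on each dyadic block yields a bound of the form
$$ N_{\min}^{1/2}\,L_{\min}^{1/2}\,L_{\text{med}}^{1/2}\,\|F_1\|_{L^2}\|F_2\|_{L^2}\|F_0\|_{L^2}, $$
up to factors measuring the size of the resonance surface.

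The remaining task is to verify that, after multiplying by $\prod_i N_i^{-s_i}L_i^{-b_i}$, the six-fold geometric sum over $(N_0,N_1,N_2,L_0,L_1,L_2)$ converges. This is precisely the combinatorial content of the seven balance conditions listed in the statement: the condition $b_0+b_1+b_2>\half$ yields summability in the modulations, $s_0+s_1+s_2>\frac{3}{2}-\sum b_i$ together with $\min_{i\neq j}(s_i+s_j)\ge 0$ and the min-type conditions yield summability in the frequencies in all high-high, high-low, and output-high regimes.

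The main obstacle will be handling the borderline configurations where several of the scaling inequalities nearly saturate simultaneously; the hypothesis that the last two inequalities are not both equalities is essential to convert what would otherwise be a logarithmically divergent dyadic sum into a summable one, and a log-refinement (Bernstein with a dyadic shell width improvement) is needed in that endpoint case. The extension to $b_0<0$ is a brief postscript: one writes $\langle|\tau_0|-|\xi_0|\rangle^{-b_0}$ on the left-hand side and, using the additional hypotheses $b_0+b_1>0$, $b_0+b_2>0$ and $s_1+s_2>-b_0$, redistributes this extra weight onto the factors $u$ and $v$ via the triangle inequality for the modulations, reducing to a case of the main (non-negative) estimate already established.
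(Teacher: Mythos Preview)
The paper does not prove this proposition at all; it is quoted verbatim from \cite{AFS}, Lemma 7, and the text preceding the statement says so explicitly. So there is nothing to compare your argument against in the present paper.

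That said, your outline is in the spirit of the actual proof in \cite{AFS}: dualize to a symmetric trilinear form, perform a dyadic decomposition in frequency $N_i$ and modulation $L_i$, apply an $L^2$ bilinear block estimate, and then sum. Two remarks. First, the bilinear block bound you quote, $N_{\min}^{1/2}L_{\min}^{1/2}L_{\mathrm{med}}^{1/2}$, is the generic estimate, but \cite{AFS} also needs refined bounds in the ``anomalous'' high-high-to-low interactions where the output frequency is much smaller than the inputs; these cases are where the borderline condition $s_0+s_1+s_2\ge\frac{3}{4}$ and the non-simultaneous-equality clause actually bite, and your sketch glosses over them. Second, invoking Lemmas 4.3--4.5 of \cite{FK} here is a slight conflation: those integral identities are what drive the $\widehat{L}^{r'}$-based arguments in Lemmas \ref{Lemma5.1} and \ref{Lemma5.6} above, whereas the $r=2$ case in \cite{AFS} rests directly on the $L^2$ cone-restriction (thickened-cone volume) estimate. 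The two are related but not interchangeable in the details. For the $b_0<0$ extension your description via the hyperbolic Leibniz rule is correct in outline; the additional hypotheses $b_0+b_i>0$ handle the two modulation-transfer terms, and $s_1+s_2>-b_0$ the remaining $b_\pm$ term.

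For the purposes of this paper, simply citing \cite{AFS} is the intended ``proof''.
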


\begin{Cor}
	If $b_0 \ge 0$ , $b_1,b_2 > \half$ the following assumptions are sufficient:
	$$s_0+s_1+s_2 > 1-(b_0+s_1+s_2) \, , \, s_0+s_1+s_2 \ge \frac{3}{4} \,, \,
\min_{i \neq j}(s_i+s_j) \ge 0 \, ,$$  where the last two inequalities are not
both equalities.
\end{Cor}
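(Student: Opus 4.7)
The plan is to deduce the corollary as a direct application of Proposition \ref{Prop.7.1}, by verifying that its seven hypotheses all follow from the three stated assumptions together with $b_0 \ge 0$ and $b_1, b_2 > \half$. First I would dispatch the conditions that involve only the $b_i$'s or are otherwise trivial. Since $b_1 + b_2 > 1$, one has $b_0+b_1+b_2 > 1 > \half$, and also $\min_{i \neq j}(b_i+b_j) \ge b_0 + \min(b_1,b_2) > \half$ and $\min_i b_i \ge 0$. Combined with $s_0+s_1+s_2 \ge \frac{3}{4} > \half$, this reduces the three conditions $s_0+s_1+s_2 > \frac{3}{2}-(b_0+b_1+b_2)$, $s_0+s_1+s_2 > 1-\min_{i \ne j}(b_i+b_j)$, and $s_0+s_1+s_2 > \half-\min_i b_i$ to trivialities. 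The sixth and seventh hypotheses of the proposition, together with the ``not both equalities'' caveat, are assumed verbatim in the corollary.

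The only substantive task is the fifth hypothesis, which I would rewrite as the equivalent requirement
$$ \min\bigl(S + b_0+s_1+s_2,\; S+s_0+b_1+s_2,\; S+s_0+s_1+b_2\bigr) > 1, \qquad S := s_0+s_1+s_2. $$
The first expression is exactly the first assumption of the corollary. For the second, I would regroup $S + s_0+b_1+s_2 = S + (s_0+s_2) + b_1$ and use $S \ge \frac{3}{4}$, $s_0+s_2 \ge 0$ (from $\min_{i\ne j}(s_i+s_j) \ge 0$), and the strict inequality $b_1 > \half$ to obtain a lower bound of $\frac{5}{4} > 1$. The third expression is handled symmetrically, using $s_0+s_1 \ge 0$ and $b_2 > \half$.

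I expect no genuine obstacle here; the content is pure bookkeeping once condition 5 is recast in the equivalent form above. The only point to keep track of is strictness of the inequalities: the strict lower bounds $b_1, b_2 > \half$ are precisely what yield strict inequality in the second and third cases of condition 5, while the first case relies on the explicit strict first assumption of the corollary, and the transferred sixth and seventh hypotheses carry the ``not both equalities'' caveat needed by the proposition.
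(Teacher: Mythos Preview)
Your proposal is correct. The paper states this corollary without proof, leaving the verification of Proposition~\ref{Prop.7.1}'s hypotheses implicit; your argument carries out exactly that verification, and the bookkeeping is sound (in particular, your reformulation of the fifth hypothesis as $S + X_i > 1$ for each of the three expressions, and the use of $s_0+s_j \ge 0$ together with $b_1,b_2 > \half$ to handle the second and third cases, is precisely what is needed).
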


\begin{lemma}
\label{Lemma1''}Assume $ s \le l+1$ , $ l > \frac{1}{4}$ , $s > \half$ . The
following estimate applies for $ \epsilon >0$ sufficiently small:
$$ \|Q_{12}(u,D^{-1}v)\|_{H^{s-1,-\half + 2\epsilon}} \lesssim
\|u\|_{X^{s,\half+\epsilon,\pm_1}} \|v\|_{X^{l,\frac{3}{4}+\epsilon,\pm_2}} \, . $$
\end{lemma}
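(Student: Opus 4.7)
The plan is to reduce the null-form estimate to the classical bilinear wave-Sobolev estimates of Proposition \ref{Prop.7.1} via the Klainerman--Machedon null form technique, exactly as announced in the introduction.

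First I exploit the geometric factorization
$$|\eta \wedge (\xi - \eta)|^{2} = \tfrac{1}{4}\bigl(|\eta|+|\xi-\eta|-|\xi|\bigr)\bigl(|\eta|+|\xi-\eta|+|\xi|\bigr)\bigl(|\xi|-||\eta|-|\xi-\eta||\bigr)\bigl(|\xi|+||\eta|-|\xi-\eta||\bigr),$$
which shows that the symbol of $Q_{12}$ vanishes whenever $\eta$ and $\xi-\eta$ are parallel. Depending on the signs $\pm_1,\pm_2$, one of the small factors $|\eta|+|\xi-\eta|-|\xi|$ or $|\xi|-||\eta|-|\xi-\eta||$ vanishes on the characteristic locus and is dominated by a sum of the three hyperbolic weights $||\tau|-|\xi||$, $|\lambda \mp_1 |\eta||$, $|(\tau-\lambda) \mp_2 |\xi-\eta||$; bounding the remaining large factors trivially by $(|\xi|+|\eta|+|\xi-\eta|)$, this yields the pointwise Fourier domination
\begin{align*}
Q_{12}(u, D^{-1}v) \precsim {} & D^{1/2}\bigl(D_-^{1/2} u \cdot D^{-1/2} v\bigr) + D^{1/2}\bigl(D^{1/2} u \cdot D^{-1/2} D_-^{1/2} v\bigr) \\
& + D_-^{1/2} D^{1/2}\bigl(D^{1/2} u \cdot D^{-1/2} v\bigr),
\end{align*}
valid (with the sign of each $D_-^{1/2}$ adapted to the sign of the factor it acts upon) uniformly in the four sign combinations.

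Second, I estimate each of the three resulting terms separately by Proposition \ref{Prop.7.1}. The outer $D^{1/2}$ moves the target from $H^{s-1,-1/2+2\epsilon}$ to $H^{s-1/2,-1/2+2\epsilon}$; the $D^{-1/2}$ on $v$ raises its spatial index from $l$ to $l+1/2$; in the third term, $D_-^{1/2}$ is absorbed by the negative target $b$-index to give the effective target $H^{s-1/2,2\epsilon}$. With parameters $s_1=s-1/2$, $s_2=l+1/2$, $s_0=-(s-1/2)$, the critical sum $s_0+s_1+s_2=l+1/2$ exceeds $3/4$ precisely when $l>1/4$, and the side condition $\min_{i\neq j}(s_i+s_j)\ge 0$ reduces to $s_0+s_2=l-s+1\ge 0$ (from $s\le l+1$), $s_1+s_2=s+l>0$, and the borderline $s_0+s_1=0$, which is paired with the strict inequality $s_0+s_1+s_2>3/4$ to satisfy the ``not both equalities'' clause.

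The main obstacle will be careful bookkeeping across the three sub-terms to verify every hypothesis of Proposition \ref{Prop.7.1}, most notably the mixed conditions involving $b_0+s_1+s_2$, $s_0+b_1+s_2$, $s_0+s_1+b_2$. A minor subtlety occurs in the first sub-term, where the reduced hyperbolic index $b_1=\epsilon$ on $D_-^{1/2}u$ must still satisfy $b_0+b_1>0$ with $b_0=1/2-2\epsilon$, which holds comfortably for small $\epsilon$. Once all hypotheses are verified, summing the three contributions produces the claim.
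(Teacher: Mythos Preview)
Your approach is essentially the paper's: the same Klainerman--Machedon decomposition (the paper simply quotes it from \cite{KS} as display (\ref{KS})) followed by three applications of Proposition~\ref{Prop.7.1}. Two points need attention.

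First, your displayed decomposition has a slip: the first term should carry $D^{1/2}u$, not just $u$. Your own factorization gives $|\eta\wedge(\xi-\eta)|\lesssim(\text{resonant})^{1/2}|\xi|^{1/2}|\eta|^{1/2}|\xi-\eta|^{1/2}$, so after dividing by $|\xi-\eta|$ every term must contain $D^{1/2}u\cdot D^{-1/2}v$ inside the outer $D^{1/2}$.

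Second, the $D^{-1/2}$ on $v$ is a homogeneous gain, so you cannot simply pass from $\|v\|_{X^{l,\cdot}}$ to $\|D^{-1/2}v\|_{X^{l+1/2,\cdot}}$ at low frequencies of $v$. The paper handles $|\xi-\eta|\lesssim 1$ separately with the crude bound $Q_{12}(u,D^{-1}v)\precsim D^{1/2}(D^{1/2}u\cdot v)$, after which the estimate is trivial since $\|v\|_{H^{l,\cdot}}\sim\|v\|_{H^{N,\cdot}}$ on that piece. You should add this split.

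Finally, you flag the term with $D_-^{1/2}$ on $u$ as the delicate one, but in fact the decisive case is your \emph{second} term, where $D_-^{1/2}$ lands on $v$ and drops its hyperbolic index to $\tfrac14+\epsilon$. There the condition $s_0+s_1+s_2>1-(s_0+s_1+b_2)$ becomes $l+\tfrac12>1-\tfrac14$, i.e.\ exactly $l>\tfrac14$; this is where the hypothesis $\|v\|_{X^{l,3/4+\epsilon}}$ (rather than the usual $\tfrac12+\epsilon$) is genuinely needed, and the paper singles it out explicitly.
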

\begin{proof}
By \cite{KS} we obtain the estimate
\begin{align}
\label{KS}
Q_{12}(u,D^{-1}v) & \precsim D^{\half} D_-^{\half} (D^{\half} u D^{-\half} v) +
D^{\half}(D^{\half}_- D^{\half}u D^{-\half}v) + D^{\half}(D^{\half}u D^{\half}_-
D^{-\half}v) 
\end{align}
For high frequencies of $v$ we have to prove the following estimates:
\begin{align*}
\|uv\|_{H^{s-\half,2\epsilon}} & \lesssim \|u\|_{H^{s-\half,\half+\epsilon}}
\|v\|_{H^{l+\half,\frac{3}{4}+\epsilon}} \\
\|uv\|_{H^{s-\half,-\half+2\epsilon}} & \lesssim \|u\|_{H^{s-\half,\epsilon}}
\|v\|_{H^{l+\half,\frac{3}{4}+\epsilon}} \\
\|uv\|_{H^{s-\half,-\half+2\epsilon}} & \lesssim
\|u\|_{H^{s-\half,\half+\epsilon}} \|v\|_{H^{l+\half,\frac{1}{4}+\epsilon}} 
\end{align*}
We apply Prop. \ref{Prop.7.1}. Using the notation of this proposition we require
in all these estimates $s_0+s_1+s_2 = l+\half > \frac{3}{4}$ , thus $l >
\frac{1}{4}$ . The first estimate requires $l+\half > 1-(s-\half+l+\half)$ ,
thus $2l+s > \half$ , which is fulfilled. For the second estimate we need
$l+\half > 1-(\half-s+l+\half)$ , thus $2l+s > -\half$ . For the third estimate
we need the condition $l+\half > 1-(\half-s+s-\half+\frac{1}{4})$
$\Leftrightarrow$ $l >\frac{1}{4}$ . Here is the point where it is important to
have $A_j \in H^{l,\frac{3}{4}+\epsilon}$ instead of $H^{l,\half+\epsilon}$ ,
which would lead to the condition $ l > \half$ . \\
For low frequencies of $v$ we use the elementary estimate
$$ Q_{12}(u,D^{-1}v) \precsim D^{\half}((D^{\half} u) v) \, , $$
so that we reduce to
$$\|uv\|_{H^{s-\half,-\half+2\epsilon}} \lesssim
\|u\|_{H^{s-\half,\half+\epsilon}} \|v\|_{H^{l,\half+\epsilon}} \, . $$
This however is clear, because $\|v\|_{H^{l,\half+\epsilon}} \sim
\|v\|_{H^{N,\half+\epsilon}}$ for arbitrary $N$ .
\end{proof}

\begin{lemma}
\label{Lemma3''}Assume $ s \ge l$ ,  $s > \half$ and $4s-l > \frac{7}{4}$ . The
following estimate applies for $ \epsilon >0$ sufficiently small:
$$ \|D^{-1} Q_{12}(Re \, \phi, Im \, \psi)\|_{H^{l-1,-\frac{1}{4} + 2\epsilon}}
\lesssim \|\phi\|_{X^{s,\half+\epsilon,\pm_1}} \|\psi\|_{X^{s,\half+\epsilon,\pm_2}}\, . $$
\end{lemma}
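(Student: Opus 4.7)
The plan is to adapt the proof of Lemma \ref{Lemma1''}, replacing the null form by the Klainerman--Selberg-type pointwise Fourier bound \eqref{KS}. Writing $\psi = D^{-1}(D\psi)$ to match the template in \eqref{KS} and dividing through by $D$ yields
\begin{align*}
D^{-1}Q_{12}(\mathrm{Re}\,\phi, \mathrm{Im}\,\psi) \precsim\ & D^{-\frac{1}{2}}D_-^{\frac{1}{2}}\bigl(D^{\frac{1}{2}}\phi \cdot D^{\frac{1}{2}}\psi\bigr) + D^{-\frac{1}{2}}\bigl(D_-^{\frac{1}{2}}D^{\frac{1}{2}}\phi \cdot D^{\frac{1}{2}}\psi\bigr) \\
& + D^{-\frac{1}{2}}\bigl(D^{\frac{1}{2}}\phi \cdot D_-^{\frac{1}{2}}D^{\frac{1}{2}}\psi\bigr).
\end{align*}
Setting $f = D^{1/2}\phi$, $g = D^{1/2}\psi$ and absorbing the outer multipliers against the target norm $H^{l-1,-1/4+2\epsilon}$ (using $\langle D\rangle^{l-1}D^{-1/2} \lesssim \langle D\rangle^{l-3/2}$ and $\langle D_-\rangle^{-1/4+2\epsilon}D_-^{1/2} \lesssim \langle D_-\rangle^{1/4+2\epsilon}$), the problem reduces for high spatial frequencies to the three bilinear wave-Sobolev estimates
\begin{align*}
\|fg\|_{H^{l-3/2,\,1/4+2\epsilon}} & \lesssim \|f\|_{H^{s-1/2,\,1/2+\epsilon}}\|g\|_{H^{s-1/2,\,1/2+\epsilon}}, \\
\|fg\|_{H^{l-3/2,\,-1/4+2\epsilon}} & \lesssim \|f\|_{H^{s-1/2,\,\epsilon}}\|g\|_{H^{s-1/2,\,1/2+\epsilon}},
\end{align*}
together with the symmetric variant of the second; note that a $D_-^{1/2}$ landing on a single factor reduces its $D_-$-weight from $1/2+\epsilon$ to $\epsilon$.

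Each of these is then handled by Prop. \ref{Prop.7.1} with parameters $s_0 = 3/2-l$, $s_1 = s_2 = s-1/2$. For the second and third estimates $b_0 = 1/4-2\epsilon > 0$ and a direct verification shows the conditions reduce to $4s-l > 5/4$, which is strictly weaker than our hypothesis. The first estimate is the crucial one: $b_0 = -1/4-2\epsilon < 0$ activates the supplementary conditions in Prop. \ref{Prop.7.1}, namely $b_0+b_1, b_0+b_2 > 0$ (both trivially satisfied) and $s_1+s_2 > -b_0$. Among the main conditions the binding one turns out to be
\[ s_0 + s_1 + s_2 > 1 - (b_0 + s_1 + s_2), \]
which, after substitution, simplifies to $4s-l > 7/4+2\epsilon$, matching the hypothesis exactly. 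The remaining side conditions $\min_{i \ne j}(s_i + s_j) \ge 0$ and $s_0+s_1+s_2 \ge 3/4$ are ensured by $s \ge l$ and $s > 1/2$.

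The low spatial frequency contributions of either $\phi$ or $\psi$ are dealt with as at the end of the proof of Lemma \ref{Lemma1''}, via the cruder elementary bound $Q_{12}(u,v) \precsim D(D^{1/2}u \cdot D^{1/2}v)$ already used in Lemma \ref{Lemma3*}: the low-frequency factor may be estimated in any $H^N$ norm without cost, and the fractional Leibniz rule closes the argument.

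The main obstacle is the first of the three bilinear estimates: the outer $D_-^{1/2}$ coming from the Klainerman--Selberg decomposition is forced to absorb against the $\langle D_-\rangle^{-1/4+2\epsilon}$ of the target norm, producing a \emph{positive} output modulation weight. This pushes the computation into the delicate $b_0 < 0$ branch of Prop. \ref{Prop.7.1}, and extracting the sharp trade-off there is precisely what produces the threshold $4s-l > 7/4$. This records how the $1/4-2\epsilon$ gain in modulation (relative to the $H^{l-1,0}$ estimate in Lemma \ref{Lemma3*}) is consumed.
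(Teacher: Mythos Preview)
Your argument has a genuine gap in the first bilinear estimate. With $b_0=-\tfrac14-2\epsilon<0$, the supplementary hypotheses of Prop.~\ref{Prop.7.1} include $s_1+s_2>-b_0$, i.e.\ $2s-1>\tfrac14+2\epsilon$, which forces $s>\tfrac58+\epsilon$. The lemma, however, is stated under the weaker hypothesis $s>\tfrac12$, and values $s\in(\tfrac12,\tfrac58]$ are admissible (e.g.\ $s=0.55$, $l=0.3$ satisfies $s\ge l$, $s>\tfrac12$, $4s-l=1.9>\tfrac74$). You list the supplementary conditions but only verify $b_0+b_i>0$, silently skipping $s_1+s_2>-b_0$; this is precisely where the argument breaks.

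The paper circumvents this by \emph{interpolating} the first term of \eqref{KS} with the cruder bound $Q_{12}(u,v)\precsim D(D^{1/2}u\,D^{1/2}v)$ to obtain
\[
Q_{12}(u,v)\precsim D^{\frac34+2\epsilon}D_-^{\frac14-2\epsilon}\bigl(D^{\frac12}u\,D^{\frac12}v\bigr)+\ldots
\]
The outer $D_-$--exponent is tuned so that it cancels exactly against the target weight $\langle D_-\rangle^{-\frac14+2\epsilon}$, leaving $b_0=0$. This keeps the application of Prop.~\ref{Prop.7.1} in the $b_0\ge 0$ regime and avoids the supplementary condition entirely, at the cost of a slightly worse spatial weight ($s_0=\tfrac54-l-2\epsilon$ rather than $\tfrac32-l$), which is harmless. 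Your route is salvageable only if you accept the stronger hypothesis $s>\tfrac58$; to get the full range you need the paper's interpolation trick on the first term.

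A minor point: your low-frequency discussion should concern low frequencies of the \emph{output} (the product), since the problematic factor is the outer $D^{-1/2}$ acting on $fg$; the high--high to low interaction is not covered by ``low frequencies of either $\phi$ or $\psi$''. The fix you sketch via $Q_{12}(u,v)\precsim D(D^{1/2}u\,D^{1/2}v)$ is the right one once applied to that regime.
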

\begin{proof}
We apply (\ref{KS}). Combining this with the elementary estimate (cf. \cite{KS})
\begin{equation}
\label{KS1}
 Q_{12}(u,v) \precsim D(D^{\half} u D^{\half} v) \, , 
\end{equation}
we obtain
\begin{align*}
Q_{12}(u,v) & \precsim D^{\frac{3}{4}+2\epsilon} D_-^{\frac{1}{4}-2\epsilon}
(D^{\half} u D^{\half} v) + D^{\half}(D^{\half}_- D^{\half}u D^{\half}v)
+D^{\half}( D^{\half}u D^{\half}_- D^{\half}v) \, .
\end{align*}
For high frequencies of the product we have to prove the following estimates:
\begin{align*}
\|uv\|_{H^{l-\frac{5}{4}+2\epsilon,0}} & \lesssim
\|u\|_{H^{s-\half,\half+\epsilon}} \|v\|_{H^{s-\half,\half+2\epsilon}} \\
\|uv\|_{H^{l-\frac{3}{2},-\frac{1}{4}+2\epsilon}} & \lesssim
\|u\|_{H^{s-\half,\epsilon}} \|v\|_{H^{s-\half,\half+\epsilon}} \, . 
\end{align*}
Now apply Prop. \ref{Prop.7.1}.
We require for the first estimate $\frac{5}{4}-l+2s-1  > \frac{3}{4}$ , thus
$2s-l > \frac{1}{2}$ , which is true by our assumptions $s\ge l$ and $s > \half$
. Moreover we need $2s-l+\frac{1}{4} > 1-(2s-1) \, \Leftrightarrow \, 4s-l >
\frac{7}{4} $ as assumed. For the second estimate we need $2s-l+\half >
\frac{3}{4} \, \Leftrightarrow 2s-l > \frac{1}{4}$ , which is true, and $2s-l +
\half > 1-(\frac{3}{2}-l+s-\half)$, thus $3s-2l > -\half$ , which applies,
because $s \ge l$ and $s > \half$ . \\
For low frequencies of the product we use the elementary estimate (\ref{KS1}) ,
so that we reduce to
$$\|uv\|_{H^{l-1,-\frac{1}{4}+2\epsilon}} \lesssim
\|uv\|_{H^{-N,-\frac{1}{4}+2\epsilon}} \lesssim
\|u\|_{H^{s-\half,\half+\epsilon}} \|v\|_{H^{s-\half,\half+\epsilon}} \, . $$
This however is clear, because $N$ may be chosen arbitrarily. 
\end{proof}

\begin{lemma}
\label{Lemma8''}
The following estimate applies for $s \ge 0$ :
$$ \|A_0 \partial_t \phi\|_{H^{s-1}} \lesssim \|A_0\|_{\dot{H}^{\max(s-1,1+)}
\cap \dot{H}^{0+}} \|\partial_t\phi\|_{H^{s-1}} \, . $$
\end{lemma}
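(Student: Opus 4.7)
My approach is to decompose $A_0$ into its low- and high-frequency parts and handle each regime by a different mechanism. Set $\sigma_1 := \max(s-1,1+)$, which by construction satisfies $\sigma_1 > 1 = n/2$ in dimension $n=2$, and write $A_0 = P_{\le 1} A_0 + P_{> 1} A_0$ via a smooth Littlewood--Paley cutoff. The plan is to control the low-frequency piece by the $\dot{H}^{0+}$ norm and the high-frequency piece by the $\dot{H}^{\sigma_1}$ norm via the Sobolev multiplication law (SML) already used in Lemma \ref{Lemma6}.

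For the low-frequency part, since $\widehat{P_{\le 1}A_0}$ is supported in $\{|\xi|\le 1\}$, Cauchy--Schwarz gives
$$\|P_{\le 1}A_0\|_{L^\infty} \lesssim \|\widehat{P_{\le 1}A_0}\|_{L^1} \lesssim \Big(\int_{|\xi|\le 1}|\xi|^{-0+}\,d\xi\Big)^{1/2}\|A_0\|_{\dot{H}^{0+}} \lesssim \|A_0\|_{\dot{H}^{0+}},$$
where the integral converges in two dimensions. Because $P_{\le 1}A_0$ has compact Fourier support, convolving $\widehat{P_{\le 1}A_0}$ with $\widehat{\partial_t\phi}$ shifts frequencies by at most $O(1)$ and so preserves $\langle\xi\rangle^{s-1}$ up to a constant, which yields
$$\|(P_{\le 1}A_0)\partial_t\phi\|_{H^{s-1}} \lesssim \|P_{\le 1}A_0\|_{L^\infty}\|\partial_t\phi\|_{H^{s-1}} \lesssim \|A_0\|_{\dot{H}^{0+}}\|\partial_t\phi\|_{H^{s-1}}$$
for any $s \in \R$.

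For the high-frequency part, since $\sigma_1 > 0$ we have $\|P_{>1}A_0\|_{H^{\sigma_1}} \sim \|P_{>1}A_0\|_{\dot{H}^{\sigma_1}} \lesssim \|A_0\|_{\dot{H}^{\sigma_1}}$. The classical SML on $\R^2$ (cf.~\cite{T}, Prop.~3.15, as in Lemma \ref{Lemma6}) gives $\|fg\|_{H^{\sigma}} \lesssim \|f\|_{H^{\sigma_1}}\|g\|_{H^{\sigma}}$ whenever $\sigma_1 > n/2 = 1$ and $|\sigma|\le\sigma_1$. Taking $\sigma = s-1$, the first condition holds thanks to the $1+$ floor, while $|s-1|\le\sigma_1$ is trivial for $s \ge 1$ and follows from $|s-1|=1-s\le 1<1+$ for $0\le s<1$. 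Therefore
$$\|(P_{>1}A_0)\partial_t\phi\|_{H^{s-1}} \lesssim \|A_0\|_{\dot{H}^{\sigma_1}}\|\partial_t\phi\|_{H^{s-1}}.$$

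Summing the two contributions yields the lemma. I do not anticipate a serious obstacle; the only genuine design choice is the location of the cutoff and the role of the $1+$ in $\max(s-1,1+)$, which is exactly what is needed to place the high-frequency part of $A_0$ just above the two-dimensional algebra threshold $H^1$, where the SML becomes applicable.
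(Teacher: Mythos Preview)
Your argument is correct and takes a somewhat different, more unified route than the paper. The paper splits into three regimes of $s$ (namely $s\le 1$, $1<s\le 2$, $s>2$): for $s\le 1$ it proves the dual estimate $\|uv\|_{H^{1-s}}\lesssim \|u\|_{\dot H^{1+}\cap\dot H^{0+}}\|v\|_{H^{1-s}}$ and then dualizes, while for $s>1$ it uses the fractional Leibniz rule and estimates each piece separately. By contrast, you decompose $A_0$ itself via $P_{\le 1}+P_{>1}$ and treat all $s\ge 0$ at once, using the algebra/multiplier property of $H^{\sigma_1}$ with $\sigma_1>1$ for the high-frequency part and a direct Fourier argument for the low-frequency part. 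Your approach avoids the duality step and the case splitting; the paper's approach is more elementary in that it never invokes the general multiplier statement $\|fg\|_{H^\sigma}\lesssim\|f\|_{H^{\sigma_1}}\|g\|_{H^\sigma}$ for $|\sigma|\le\sigma_1$, $\sigma_1>n/2$.

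One small presentational point: in the low-frequency step the quantity that actually drives the estimate is $\|\widehat{P_{\le 1}A_0}\|_{L^1_\xi}$ (which you compute), not $\|P_{\le 1}A_0\|_{L^\infty}$; the convolution/weight-shift argument you describe uses Young's inequality with the $L^1$ norm of the Fourier transform, and $L^\infty$ alone would not suffice. Since your displayed chain already bounds $\|\widehat{P_{\le 1}A_0}\|_{L^1}\lesssim\|A_0\|_{\dot H^{0+}}$, this is only a labeling issue and the proof stands.
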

\begin{proof}
1. Let $s\le 1$ . Then by SML for $s < 1$ :
\begin{align*}
\| D^{1-s}u v\|_{L^2} &\lesssim \|D^{1-s}u\|_{\dot{H}^{s+}}
\|v\|_{\dot{H}^{1-s-}} = \|D^{1+} u\|_{L^2} \|v\|_{H^{1-s}} \,  ,\\
\|u D^{1-s}v\|_{L^2} &\lesssim \|u\|_{L^{\infty}} \|D^{1-s}v\|_{L^2} \lesssim
\|u\|_{\dot{H}^{1+} \cap \dot{H}^{0+}} \|v\|_{H^{1-s}} \, , \\
\|uv\|_{L^2} & \lesssim \|u\|_{L^{\infty}} \|v\|_{L^2}   \lesssim
\|u\|_{\dot{H}^{1+} \cap \dot{H}^{0+}} \|v\|_{L^2} \, ,
\end{align*}
where the last estimate suffices for $s=1$ ,
thus
$$ \|uv\|_{H^{1-s}} \lesssim \|u\|_{\dot{H}^{1+} \cap \dot{H}^{0+}}
\|v\|_{H^{1-s}}\, , $$
so that by duality we obtain
$$\|uv\|_{H^{s-1}} \lesssim \|u\|_{\dot{H}^{1+} \cap \dot{H}^{0+}}
\|v\|_{H^{s-1}}\, . $$
2. Let $1 < s \le 2$ . Similarly as before we obtain
\begin{align}
\nonumber
\|D^{s-1}u v \|_{L^2} & \lesssim \|D^{s-1} u\|_{\dot{H}^{2-s+}}
\|v\|_{\dot{H}^{s-1-}} \lesssim \|u\|_{\dot{H}^{1+}} \|v\|_{H^{s-1}} \, ,\\
\label{8'''}
\|u D^{s-1}v\|_{L^2}  & \lesssim \|u\|_{L^{\infty}} \|D^{s-1}v\|_{L^2} \lesssim
\|u\|_{\dot{H}^{1+} \cap \dot{H}^{0+}} \|v\|_{H^{s-1}} \,  ,
\end{align}
thus
$$
 \|uv\|_{H^{s-1}} \lesssim \|u\|_{\dot{H}^{1+} \cap \dot{H}^{0+}}
\|v\|_{H^{s-1}}\, . 
$$
3. For $s > 2$ we use 
\begin{align*}
\|D^{s-1}u v \|_{L^2} & \lesssim \|D^{s-1} u\|_{L^2} \|v\|_{L^{\infty}} \lesssim
\|D^{s-1}u\|_{L^2}  \|v\|_{H^{s-1}} \,  ,
\end{align*}
thus combining this with (\ref{8'''}) :
\begin{equation}
\label{8''}
 \|uv\|_{H^{s-1}} \lesssim \|u\|_{\dot{H}^{\max(s-1,1+)} \cap \dot{H}^{0+}}
\|v\|_{H^{s-1}}\, . 
\end{equation}
\end{proof}

\begin{lemma}
\label{Lemma9''}
The following estimate applies for $s > 0$ :
$$ \|(\partial_t A_0)  \phi\|_{H^{s-1}} \lesssim \|\partial_t
A_0\|_{\dot{H}^{\max(s-1,0+)} \cap \dot{H}^{0+}} \|\phi\|_{H^s} \, . $$
\end{lemma}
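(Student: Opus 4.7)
The argument follows that of Lemma~\ref{Lemma8''}, with the roles of the two factors exchanged: the smoother factor is now $\phi \in H^s$, while $u := \partial_t A_0$ plays the multiplier role. We split into the regimes $0 < s \le 1$, $1 < s \le 2$, and $s > 2$, apply the fractional Leibniz rule together with the Sobolev multiplication law in $\R^2$, and decompose $u = u_{low}+u_{high}$ so that the $\dot H^{\max(s-1,0+)}$ norm controls the high frequency part while the $\dot H^{0+}$ norm takes care of the low frequencies via the critical embedding $\dot H^{0+}\hookrightarrow L^{2+}$.

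For $s > 1$ we use the 2D embedding $H^s \hookrightarrow L^\infty$. Writing $\Lambda^{s-1}(u\phi)\precsim(\Lambda^{s-1}u)\phi+u\,(\Lambda^{s-1}\phi)$, the first term is estimated by
\[
\|(\Lambda^{s-1}u_{high})\phi\|_{L^2}\lesssim\|u_{high}\|_{\dot H^{s-1}}\|\phi\|_{L^\infty}\lesssim\|u\|_{\dot H^{s-1}}\|\phi\|_{H^s},
\]
while the second term is controlled by H\"older using $u\in L^{2+}$ (from $\dot H^{0+}\hookrightarrow L^{2+}$) and $\Lambda^{s-1}\phi\in L^q$ for $q<\infty$ large, coming from $\|\Lambda^{s-1}\phi\|_{H^1}=\|\phi\|_{H^s}$ and the 2D embedding $H^1\hookrightarrow L^q$. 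The case $s>2$ is handled by the same inequalities; the low-frequency piece $u_{low}$ is treated identically, using only $\|u\|_{\dot H^{0+}}$ in place of $\|u\|_{\dot H^{s-1}}$.

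For $0 < s \le 1$ the target index $s-1$ is nonpositive, and we argue by duality:
\[
\|u\phi\|_{H^{s-1}}\sim\sup_{\|w\|_{H^{1-s}}=1}|\langle u,\,\phi\overline{w}\rangle|.
\]
The 2D Sobolev embeddings $H^s\hookrightarrow L^{2/(1-s)-}$ and $H^{1-s}\hookrightarrow L^{2/s-}$, combined with H\"older, give $\|\phi w\|_{L^{2-}}\lesssim\|\phi\|_{H^s}\|w\|_{H^{1-s}}$, and pairing against $u\in L^{2+}$ (supplied by $\dot H^{0+}$) closes the estimate.

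The only technical nuisance is the low-frequency part of $\partial_t A_0$, which cannot be placed in $L^2$ directly but only in the weak space $\dot H^{0+}$; this forces small $\varepsilon$-losses in every Sobolev embedding used, harmless in themselves but to be tracked carefully, exactly as in Lemmas~\ref{Lemma8} and~\ref{Lemma8''}.
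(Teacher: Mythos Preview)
Your proof is correct and follows essentially the same route as the paper's: fractional Leibniz for $s>1$ with one term controlled by $\|u\|_{\dot H^{s-1}}\|\phi\|_{L^\infty}$ and the other by $\|u\|_{\dot H^{0+}}$ paired against the almost-critical $H^1$ embedding of $\Lambda^{s-1}\phi$, and the homogeneous Sobolev multiplication law (equivalently, your duality $+$ H\"older argument) for $0<s\le 1$. The paper handles $s\le 1$ in one line via the homogeneous SML $\|uv\|_{\dot H^{s-1}}\lesssim\|u\|_{\dot H^{0+}}\|v\|_{\dot H^{s-}}$ rather than writing out the duality, and does not split $u$ into low/high frequencies or separate $1<s\le 2$ from $s>2$, but these are cosmetic differences.
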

\begin{proof}
1. If $s \le 1$ we obtain by SML:
$$\|uv\|_{H^{s-1}} \lesssim \|uv\|_{\dot{H}^{s-1}} \lesssim \|u\|_{\dot{H}^{0+}}
\|v\|_{\dot{H}^{s-}} \lesssim \|u\|_{\dot{H}^{0+}} \|v\|_{H^s} \, . $$
2. If $ s > 1$ we obtain
\begin{align*}
\| D^{s-1} u v \|_{L^2} &\lesssim \|D^{s-1}u\|_{L^2} \|v\|_{L^{\infty}} \lesssim
\|u\|_{\dot{H}^{s-1}} \|v\|_{H^s} \, ,\\
\|u D^{s-1}v\|_{L^2} & \lesssim \|u\|_{\dot{H}^{0+}} \|D^{s-1}v\|_{\dot{H}^{1-}}
\lesssim  \|u\|_{\dot{H}^{0+}}  \|uv\|_{H^s} \, , \\
\|uv\|_{L^2} &\lesssim \|u\|_{\dot{H}^{0+}} \|v\|_{\dot{H}^{1-}} \lesssim 
\|u\|_{\dot{H}^{0+}} \|v\|_{H^s} \, ,
\end{align*}
thus
$$
 \|uv\|_{H^{s-1}} \lesssim \|u\|_{\dot{H}^{\max(s-1,0+)} \cap \dot{H}^{0+}}
\|v\|_{H^{s}}\, . $$
\end{proof}

In the next three lemmas we address the cubic nonlinearities.
\begin{lemma}
\label{Lemma10''}
Assume $ s > \half$ . Then the following estimate applies:
$$ \|A_0^2 \phi \|_{H^{s-1,0}} \lesssim \|A_0\|^2_{L^{\infty}((0,T),\dot{H}^{0+}
\cap \dot{H}^{\max(1,s-1)+})} \|\phi\|_{H^{s,0}}\, . $$
\end{lemma}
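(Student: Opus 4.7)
The plan is to reduce the claim, via Plancherel in the time variable, to a fixed-time spatial Sobolev multiplication estimate, and then to apply the multiplication law hidden inside Lemma \ref{Lemma8''} twice. Since $H^{s-1,0}=L^2_tH^{s-1}_x$ and $H^{s,0}=L^2_tH^s_x$ when the $\langle|\tau|-|\xi|\rangle$-weight has exponent $0$, the claim follows provided one has the pointwise-in-time estimate
$$\|A_0^2(t)\,\phi(t)\|_{H^{s-1}_x}\lesssim\|A_0(t)\|_{\dot H^{0+}\cap\dot H^{\max(1,s-1)+}}^2\,\|\phi(t)\|_{H^s_x},$$
since one may then take $L^\infty_t$ of the $A_0$ factor and $L^2_t$ of $\phi$.

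For the pointwise estimate, the key observation is that the proof of Lemma \ref{Lemma8''} actually establishes the more general bilinear Sobolev product law
$$\|uv\|_{H^{s-1}_x}\lesssim\|u\|_{\dot H^{\max(s-1,1+)}\cap\dot H^{0+}}\|v\|_{H^{s-1}_x} \qquad (s>\half),$$
without using any specific structure of the two factors beyond their regularity. I would apply this law first with $u=A_0$ and $v=A_0\phi$, and then again with $u=A_0$, $v=\phi$ (combined with the trivial embedding $H^s\hookrightarrow H^{s-1}$), to obtain
$$\|A_0^2\phi\|_{H^{s-1}_x}\lesssim\|A_0\|_{\dot H^{\max(s-1,1+)}\cap\dot H^{0+}}^2\,\|\phi\|_{H^s_x}.$$

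Finally, I need to match the exact regularity indices stated in the lemma. For $\half<s\le 2$ the quantity $\max(s-1,1+)$ equals $1+$, which coincides with $\max(1,s-1)+$, so nothing further is needed. For $s>2$ the argument above produces $\|A_0\|_{\dot H^{s-1}}$ whereas the hypothesis offers only $\|A_0\|_{\dot H^{(s-1)+}\cap\dot H^{0+}}$; but splitting the Fourier integral at $|\xi|=1$ yields the trivial bound $\|f\|_{\dot H^\alpha}\lesssim\|f\|_{\dot H^{0+}\cap\dot H^{\alpha+}}$ for any $\alpha>0$, which closes this epsilon-sized gap. The only structural obstacle one might worry about is the failure of the endpoint embedding $\dot H^1\hookrightarrow L^\infty$ in two space dimensions; this is precisely why the index $1+$ rather than $1$ appears in Lemma \ref{Lemma8''}, and it is already absorbed there, so no additional work is required.
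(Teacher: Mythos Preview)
Your argument is correct, and it takes a genuinely different route from the paper's own proof. The paper treats the trilinear product $A_0^2\phi$ directly: for $s\le 1$ it simply uses $\|A_0^2\phi\|_{L^2_{xt}}\le\|A_0\|_{L^\infty_{xt}}^2\|\phi\|_{L^2_{xt}}$ together with the $2$D embedding $\dot H^{0+}\cap\dot H^{1+}\hookrightarrow L^\infty$, and for $s>1$ it applies the fractional Leibniz rule to $\Lambda^{s-1}(A_0^2\phi)$ and then H\"older and Sobolev embeddings term by term. Your approach instead factors the trilinear bound through two applications of the bilinear multiplication law $\|uv\|_{H^{s-1}}\lesssim\|u\|_{\dot H^{\max(s-1,1+)}\cap\dot H^{0+}}\|v\|_{H^{s-1}}$ already established in the proof of Lemma~\ref{Lemma8''}. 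This is more modular and avoids repeating the case analysis; the only extra bookkeeping is your final epsilon adjustment $\|f\|_{\dot H^{s-1}}\lesssim\|f\|_{\dot H^{0+}\cap\dot H^{(s-1)+}}$ for $s>2$, which is indeed immediate by splitting frequencies at $|\xi|=1$. Both arguments ultimately rest on the same Sobolev product structure, but yours makes the dependence on Lemma~\ref{Lemma8''} explicit, while the paper's direct computation is self-contained.
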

\begin{proof}
 In the case $ s \le 1$ we obtain
\begin{align*}
\|A_0^2 \phi \|_{H^{s-1,0}} \lesssim \|A_0^2 \phi\|_{L^2_{xt}}
 \lesssim \|A_0\|^2_{L^{\infty}_{xt}} \|\phi\|_{H^{0,0}}
\lesssim \|A_0\|^2_{L^{\infty}((0,T),\dot{H}^{0+} \cap \dot{H}^{1+})}
\|\phi\|_{H^{s,0}} 
\end{align*}
and in the case $ s > 1$ :
\begin{align*}
\|A_0^2 \phi \|_{H^{s-1,0}} & \lesssim \|\Lambda^{s-1}(A_0^2) \phi\|_{L^2_{xt}}
+ \|A_0^2 \Lambda^{s-1} \phi\|_{L^2_{xt}} \\
& \lesssim\|\Lambda^{s-1}(A_0^2)\|_{L^{\infty}_t L^2_x} \|\phi\|_{L^2_t
L^{\infty}_x} + \|A_0^2\|_{L^{\infty}_t L^{2+}_x} \|\phi\|_{L^2_t
H^{s-1,\infty-}_x} \\
& \lesssim \|A_0\|_{L^{\infty}_t L^{\infty-}_x}
\|\Lambda^{s-1}A_0\|_{L^{\infty}_t L^{2+}_x} \|\phi\|_{L^2_t H^s_x} +
\|A_0\|_{L^{\infty}_t \dot{H}^{0+}}^2 \|\phi\|_{L^2_t H^s_x}  \\
& \lesssim \|A_0\|^2_{L^{\infty}((0,T),\dot{H}^{0+} \cap \dot{H}^{s-1+})}
\|\phi\|_{H^{s,0}} \, . 
\end{align*}
These bounds are more than sufficient for our claim.
\end{proof}

\begin{lemma}
\label{Lemma4''}
Assume $ s > \frac{1}{4} $ , $ l > \frac{1}{4}$ and $2l-s > -\frac{1}{4}$ . Then
the following estimate applies:
$$ \|A_j A^j \phi\|_{H^{s-1,-\half++}} \lesssim \|A_j\|^2_{H^{l,\half+}}
\|\phi\|_{H^{s,\half+}} \, . $$
\end{lemma}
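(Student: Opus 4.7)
The plan is to factor the triple product as $A \cdot (A\phi)$ and reduce the estimate to two successive applications of Proposition \ref{Prop.7.1}, in direct analogy with the proof of Lemma \ref{Lemma4} in the Fourier--Lebesgue setting but now relying on the $L^2$-based AFS bilinear estimates. First I would prove
$$ \|A \cdot A\phi\|_{H^{s-1,-\half++}} \lesssim \|A\|_{H^{l,\half+}} \|A\phi\|_{H^{m,\half-}} $$
for an intermediate regularity exponent $m$ to be chosen, and then
$$ \|A\phi\|_{H^{m,\half-}} \lesssim \|A\|_{H^{l,\half+}} \|\phi\|_{H^{s,\half+}}. $$
For the second estimate the value $b_0=-\half+$ is negative in Proposition \ref{Prop.7.1}, so the additional hypotheses $b_0+b_1>0$, $b_0+b_2>0$, $s_1+s_2>-b_0$ must be checked; all three reduce to $l,s>\tfrac14$ and thus hold by assumption.

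The crux is the selection of $m$. For the first estimate the parameters in the notation of Proposition \ref{Prop.7.1} are $(s_0,b_0)=(1-s,\half--)$, $(s_1,b_1)=(l,\half+)$, $(s_2,b_2)=(m,\half-)$, and the critical condition $s_0+s_1+s_2\ge \tfrac34$ forces $m \ge s-l-\tfrac14$. For the second estimate the parameters are $(s_0,b_0)=(-m,-\half+)$, $(s_1,b_1)=(l,\half+)$, $(s_2,b_2)=(s,\half+)$, and the condition $\min_{i\neq j}(s_i+s_j)\ge 0$ forces $m\le \min(l,s)$. The two bounds on $m$ are simultaneously satisfiable precisely when $2l-s> -\tfrac14$, which is the last hypothesis of the lemma; the strict inequality $l>\tfrac14$ secures the caveat that the inequalities $s_0+s_1+s_2\ge\tfrac34$ and $\min_{i\neq j}(s_i+s_j)\ge 0$ are not both equalities.

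The remaining conditions of Proposition \ref{Prop.7.1} (the bounds involving $b_0+b_1+b_2>\half$, $s_0+s_1+s_2>\tfrac32-(b_0+b_1+b_2)$, and the more delicate $s_0+s_1+s_2>1-\min(b_0+s_1+s_2,\dots)$) are routine to verify once $m$ is pinned down, since the temporal weights $\half\pm$ already saturate the $b$-sum conditions and the chosen $m$ keeps the spatial sums slightly above their critical values. The main obstacle, and the reason the $-\tfrac14$ threshold in $2l-s$ appears in the Corollary to Theorem \ref{Theorem2} and in Remark~1 following Theorem~\ref{Theorem1}, is exactly this balancing of the intermediate exponent $m$ between the two applications of Proposition \ref{Prop.7.1}; everything else is bookkeeping.
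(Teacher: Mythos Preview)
Your factoring $A\cdot(A\phi)$ with the intermediate product placed in $H^{m,\half-}$ does not quite close. In the second bilinear step you have $b_0=-\tfrac12+$, $b_1=b_2=\tfrac12+$, so $b_0+b_1+b_2$ exceeds $\tfrac12$ only by an arbitrarily small amount. The second hypothesis of Proposition~\ref{Prop.7.1}, namely $s_0+s_1+s_2>\tfrac32-(b_0+b_1+b_2)$, then forces $l+s-m>1-$, i.e.\ $m<l+s-1$. Combined with your constraint $m\ge s-l-\tfrac14$ from the first step, this yields $s-l-\tfrac14<l+s-1$, hence $l>\tfrac38$. So for $\tfrac14<l\le\tfrac38$ (e.g.\ $s=\tfrac12$, $l=0.3$, which satisfies all hypotheses of the lemma) no admissible $m$ exists. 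You overlooked this condition when you wrote that the temporal weights ``already saturate the $b$-sum conditions''; in fact the near-critical $b$-sum is exactly what makes the spatial condition bite.

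The paper avoids this by two changes: it groups the product as $(A_jA^j)\cdot\phi$ rather than $A\cdot(A\phi)$, and it places the intermediate product $A_jA^j$ in $H^{m,0}$ (with $m=-\tfrac14+$ for $s\le\tfrac34$ and $m=s-1$ for $s>\tfrac34$). With $b_0=0$ in the second application the $b$-sum is comfortably above $\tfrac12$ and the binding constraint becomes $s_0+s_1+s_2\ge\tfrac34$, i.e.\ $2l-s>-\tfrac14$, which is precisely the hypothesis. Your scheme can be salvaged the same way: keep the grouping $A\cdot(A\phi)$ if you like, but put $A\phi$ in $H^{m,0}$ rather than $H^{m,\half-}$; then the second step has $b_0=0$ and the compatibility window for $m$ reopens down to $l>\tfrac14$.
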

\begin{proof}
1. Let $s \le \frac{3}{4}$ . We use Prop. \ref{Prop.7.1} twice and obtain
\begin{align*}
\|A_j A^j \phi\|_{H^{s-1,-\half++}}  \lesssim \|A_j A^j\|_{H^{-\frac{1}{4}+,0}}
\|\phi\|_{H^{s,\half +}}  \lesssim \|A_j\|^2_{H^{l,\half+}}
\|\phi\|_{H^{s,\half+}} \, . 
\end{align*}
The conditions in Prop. \ref{Prop.7.1} are easily checked for the first
estimate, whereas the second estimate requires $2l+\frac{1}{4} >\frac{3}{4} \,
\Leftrightarrow \, l > \frac{1}{4}$ and $2l+\frac{1}{4} > 1-2l$ , which is also
fulfilled. \\
2. If $s > \frac{3}{4}$ we obtain 
\begin{align*}
\|A_j A^j \phi\|_{H^{s-1,-\half++}}  \lesssim \|A_j A^j\|_{H^{s-1,0}}
\|\phi\|_{H^{s,\half +}}  \lesssim \|A_j\|^2_{H^{l,\half+}}
\|\phi\|_{H^{s,\half+}} \, , 
\end{align*}
 where the first estimate requires $s > \frac{3}{4}$ and the last estimate $2l-s
> -\frac{1}{4}$ and $2l-s+1 > 1-2l \, \Leftrightarrow \, 4l-s >0$ , which is
true under our assumptions.
\end{proof}

\begin{lemma}
If $s > \frac{1}{4} $ , $l > \frac{1}{4}$ and $2s-l > - \frac{1}{4}$ the
following estimate applies:
$$ \|A_j |\phi|^2\|_{H^{l-1,-\frac{1}{4}++}} \lesssim
\|A_j\|_{H^{l,\frac{3}{4}+}} \|\phi\|_{H^{s,\half+}}^2 \, . $$
\end{lemma}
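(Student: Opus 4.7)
The plan is to mimic the proof of Lemma \ref{Lemma4''}, with the roles of $A_j$ and $\phi$ exchanged and the outer spatial regularity switched from $s-1$ to $l-1$. A first application of Proposition \ref{Prop.7.1} will peel $A_j$ off the triple product:
$$\|A_j |\phi|^2\|_{H^{l-1,-\frac{1}{4}++}} \lesssim \|A_j\|_{H^{l,\frac{3}{4}+}} \||\phi|^2\|_{H^{m,0}},$$
where the spatial index $m$ of the bilinear piece is chosen as $m=-\frac{1}{4}+$ when $l\le\frac{3}{4}$, and as $m=l-1+$ when $l>\frac{3}{4}$. This is the analogue of splitting Lemma \ref{Lemma4''} into the two cases $s\le\frac{3}{4}$ and $s>\frac{3}{4}$. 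The large time weight $\frac{3}{4}+$ on $A_j$ is exactly what gives the small negative time weight $-\frac{1}{4}++$ on the left, via $b_0+b_1+b_2=1$ in Proposition \ref{Prop.7.1}.

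A second application of Proposition \ref{Prop.7.1} then handles
$$\||\phi|^2\|_{H^{m,0}} \lesssim \|\phi\|_{H^{s,\half+}}^2.$$
In Case 1 ($l\le\frac{3}{4}$), one has $2s-m=2s+\frac{1}{4}->\half$ from $s>\frac{1}{4}$, so the bilinear estimate closes with room to spare and the sharp condition for the first step is $\min_{i\ne j}(s_i+s_j)\ge 0$, giving $l>\frac{1}{4}$. In Case 2 ($l>\frac{3}{4}$), the scaling inequality $s_0+s_1+s_2\ge\frac{3}{4}$ for the $|\phi|^2$-estimate reads $1-(l-1)+2s\ge\frac{3}{4}$, i.e.\ exactly $2s-l\ge-\frac{1}{4}$, which is the binding hypothesis of the lemma.

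The main obstacle is the verification of the auxiliary condition $s_0+s_1+s_2>1-\min(b_0+s_1+s_2,\,s_0+b_1+s_2,\,s_0+s_1+b_2)$ of Proposition \ref{Prop.7.1} in Case 2. This forces a further split according to whether $s\le\frac{3}{2}-l$ or $s>\frac{3}{2}-l$; in the first sub-case the minimum is $2s$ and one needs $l<4s$, which follows from $2s-l>-\frac{1}{4}$ together with $l>\half$, while in the second sub-case the minimum is $\frac{3}{2}-l+s$ and one needs $s>\frac{2l}{3}-\half$, which again follows from $2s-l>-\frac{1}{4}$ and $l\le s+1$. The remaining book-keeping inequalities in Proposition \ref{Prop.7.1} are strictly dominated by these.
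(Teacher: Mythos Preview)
Your approach is essentially the same as the paper's: split into the cases $l\le\frac{3}{4}$ and $l>\frac{3}{4}$, apply Proposition~\ref{Prop.7.1} once to peel off $A_j$, and once more to bound $\||\phi|^2\|_{H^{m,0}}$ with $m=-\frac{1}{4}$ (resp.\ $m=l-1$). The paper is terser in Case~2, merely stating that Proposition~\ref{Prop.7.1} applies and that $2s-l>-\frac{1}{4}$ is the binding constraint; your more detailed sub-case verification is fine, but note that your appeal to $l\le s+1$ invokes a condition not listed among the lemma's hypotheses---the paper's proof tacitly needs it too (for $\min_{i\ne j}(s_i+s_j)\ge 0$), and it is always satisfied in the applications since Theorem~\ref{Theorem1} assumes $l\le s$.
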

\begin{proof}
1. Let $l \le \frac{3}{4}$ .
We use Prop. \ref{Prop.7.1} twice and obtain
$$ \|A_j |\phi|^2\|_{H^{l-1,-\frac{1}{4}++}} \lesssim
\|A_j\|_{H^{l,\frac{3}{4}+}} \| |\phi|^2\|_{H^{-\frac{1}{4},0}}
 \lesssim \|A_j\|_{H^{l,\frac{3}{4}+}} \|\phi\|_{H^{s,\half+}}^2 \, . $$
For the first estimate we require  $\frac{3}{4} > 1-(\frac{1}{4}+l-\frac{1}{4})
\, \Leftrightarrow \, l > \frac{1}{4}$ and for the last estimate $2s+\frac{1}{4}
>\frac{3}{4} \, \Leftrightarrow \, s > \frac{1}{4}$ , which also implies
$2s+\frac{1}{4} > 1-2s$ . \\
2. Let $ l > \frac{3}{4}$ . Similarly we obtain
$$ \|A_j |\phi|^2\|_{H^{l-1,-\frac{1}{4}++}} \lesssim
\|A_j\|_{H^{l,\frac{3}{4}+}} \| |\phi|^2\|_{H^{l-1,0}}
 \lesssim \|A_j\|_{H^{l,\frac{3}{4}+}} \|\phi\|_{H^{s,\half+}}^2 \, . $$
Prop. \ref{Prop.7.1} implies both estimates, where the last estimate requires 
$2s-l > - \frac{1}{4}$.
\end{proof}

Hereafter we interpolate between the obtained bi- and trilinear estimates for
$r=1+$ and for $r=2$ .

\begin{lemma}
\label{Lemma1'}
Let $1 < r \le 2$ , $s \le l+1$ , $s > \frac{1}{r}$ and $l > \frac{3}{2r}-\half$
. The following estimate applies:
 $$ \|Q_{12}(\phi,D^{-1}A)\|_{H^r_{s-1,\frac{1}{r}-1++}} \lesssim \|A\|_{X^r_{l,
\half +\frac{1}{2r}+}} \|\phi\|_{X^r_{s,\frac{1}{r}+}} \, .$$   
\end{lemma}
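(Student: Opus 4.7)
The target estimate interpolates cleanly between two extreme cases, so the plan is to apply bilinear complex interpolation in the $X^r_{s,b,\pm}$ and $H^r_{s,b}$ scales, using the interpolation identities recorded in Section 2 ($(X^{r_0}_{s_0,b_0,\pm}, X^{r_1}_{s_1,b_1,\pm})_{[\theta]} = X^r_{s,b,\pm}$, and likewise for $H^r$).

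The first endpoint is taken at $r_0$ arbitrarily close to $1$. Here Corollary \ref{Lemma2}, itself a consequence of the null-form bound in Lemma \ref{Lemma5.1}, yields
\[
\|Q_{12}(\phi,D^{-1}A)\|_{H^{r_0}_{s_0-1,0}} \lesssim \|\phi\|_{H^{r_0}_{s_0,b_0}} \|A\|_{H^{r_0}_{l_0,b_0}}
\]
under $s_0 \le l_0+1$, $l_0 \ge 1/r_0$ and $b_0 > 1/r_0$; at $r_0 = 1+$ one may take $s_0,l_0,b_0$ all just above $1$. Since $\langle|\tau|-|\xi|\rangle \le \langle\tau \pm|\xi|\rangle$, one has $\|\cdot\|_{H^{r_0}_{s_0,b_0}} \lesssim \|\cdot\|_{X^{r_0}_{s_0,b_0,+}} + \|\cdot\|_{X^{r_0}_{s_0,b_0,-}}$, so this translates directly to the $X^{r_0}$-scale.

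The second endpoint, at $r_1 = 2$, is precisely Lemma \ref{Lemma1''}, which gives the estimate with $\phi \in X^{2}_{s_1,1/2+\epsilon,\pm_1}$, $A \in X^{2}_{l_1,3/4+\epsilon,\pm_2}$ and output in $H^{2}_{s_1-1,-1/2+2\epsilon}$ under $s_1 > 1/2$, $l_1 > 1/4$, $s_1 \le l_1+1$. Choosing the interpolation parameter $\theta = 2 - 2/r$, so that $1/r = (1-\theta)/r_0 + \theta/r_1$ as $r_0 \to 1$, a routine arithmetic check confirms that the boundary choices $s_0 = 1$, $s_1 = 1/2$ yield $s = 1/r$; $l_0 = 1$, $l_1 = 1/4$ yield $l = 3/(2r) - 1/2$; the input $b$-weights combine to $1/r$ for $\phi$ and $1/2 + 1/(2r)$ for $A$; and the output $b$-weight becomes $1/r - 1$. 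All of these match the statement once the $+$'s and $++$'s are opened.

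The main obstacle is formalising the bilinear complex interpolation: one has to realise the estimate as an analytic family of bilinear maps on the product $X^{r}_{s,b,\pm_1} \times X^{r}_{l,b',\pm_2}$ and invoke the multilinear Calder\'on--Lions--Stein theorem (e.g.\ Bergh--L\"ofstr\"om \S4.4). The interpolation identity for $X^{r}_{s,b,\pm}$ quoted in Section 2 is the essential ingredient; once granted, the remaining work is bookkeeping of $\epsilon$'s to guarantee that the output weight ends up at $1/r - 1 + 2\epsilon$ (i.e.\ the $++$) rather than merely $1/r - 1$, which is achieved by taking the small parameter at the $r_1=2$ endpoint slightly larger than the one needed at the output.
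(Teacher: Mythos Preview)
Your proposal is correct and follows essentially the same route as the paper: both use Corollary \ref{Lemma2} (equivalently Lemma \ref{Lemma5.1}) at the $r=1+$ endpoint and Lemma \ref{Lemma1''} at $r=2$, then apply bilinear complex interpolation in the $X^r_{s,b,\pm}$ and $H^r_{s,b}$ scales. Your write-up is in fact more explicit than the paper's, which simply records the two endpoint estimates with their hypotheses and closes with ``Bilinear interpolation completes the proof''; your arithmetic check of the interpolated parameters and your remarks on formalising the bilinear interpolation are additional detail the paper omits.
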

\begin{proof}  
We apply Lemma \ref{Lemma5.1} for $r=1+$ , which shows 
$$ \|Q_{12}(\phi,D^{-1}A)\|_{H^r_{s_1-1,0}} \lesssim \|A\|_{X^r_{l_1,
\frac{1}{r}+,\pm_1}} \|\phi\|_{X^r_{s_1,\frac{1}{r}+,\pm_2}}$$
provided $ 1 \le s_1 \le l_1+1 $ and $ l_1 \ge 1$ . Lemma \ref{Lemma1''} implies
$$ \|Q_{12}(\phi,D^{-1}A)\|_{H^{s-1,-\half++}} \lesssim \|A\|_{X^{l_2,
\frac{3}{4}+,\pm_1}} \|\phi\|_{X^{s_2,\frac{1}{2}+,\pm_2}} $$
provided $ \half \le s_2 \le l_2+1 $ and $ l_2 >  \frac{1}{4}$ . Bilinear
interpolation completes the proof.
\end{proof}

\begin{lemma}
\label{Lemma3a}
Let $1 < r \le 2$ , $s > \frac{3}{2r}-\frac{1}{4}$ , $l > \frac{3}{2r}-\half$ , 
$ l \le s+\frac{1}{r}-\half$ and $2s-l > \frac{3}{2r}$ . Then the following
estimate applies:
$$ \|D^{-1} Q_{12}(Re \, \phi, Im \, \phi)\|_{H^r_{l-1,-\half+\frac{1}{2r}++}}
\le \|\phi\|^2_{X^r_{s,\half+}} \, . $$
\end{lemma}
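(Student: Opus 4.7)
The plan is to mirror the bilinear-interpolation template used in Lemma~\ref{Lemma1'}, combining the two endpoints $r = 1+$ and $r = 2$. Section~2 already records complex interpolation for both $X^r_{s,b,\pm}$ and $H^r_{s,b}$, so once the two endpoint bounds are in place the conclusion follows from interpolation in the parameters.

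At the $r = 1+$ endpoint I would invoke Lemma~\ref{Lemma3*} with parameters $(s_1, l_1)$ satisfying $1 \le l_1 \le s_1 + \half$, $2s_1 - l_1 > \frac{3}{2}$ and $b_1 > 1$, obtaining
\[
 \|D^{-1} Q_{12}(Re\,\phi,Im\,\phi)\|_{H^{1+}_{l_1 - 1,\, 0}} \lesssim \|\phi\|_{X^{1+}_{s_1,\, 1+}}^{2}.
\]
At the $r = 2$ endpoint I would invoke Lemma~\ref{Lemma3''} with parameters $(s_2, l_2)$ satisfying $s_2 \ge l_2$, $s_2 > \half$ and $4s_2 - l_2 > \frac{7}{4}$, obtaining
\[
 \|D^{-1} Q_{12}(Re\,\phi,Im\,\phi)\|_{H^{2}_{l_2 - 1,\, -\frac{1}{4} + 2\epsilon}} \lesssim \|\phi\|_{X^{2}_{s_2,\, \half + \epsilon}}^{2}.
\]
Then choose the interpolation parameter $\theta = 2 - \frac{2}{r} \in (0,1)$, so that $\frac{1-\theta}{1} + \frac{\theta}{2} = \frac{1}{r}$. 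The complex-interpolation identities give $s = (1-\theta) s_1 + \theta s_2$ and $l = (1-\theta) l_1 + \theta l_2$, while the second Sobolev index on the left-hand side interpolates as $(1-\theta)\cdot 0 + \theta\cdot(-\tfrac{1}{4} + 2\epsilon) = -\half + \frac{1}{2r} + O(\epsilon)$, matching the ``$-\half + \frac{1}{2r}{++}$'' in the statement.

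The only nontrivial remaining task is verifying that for every $(s,l)$ in the stated range the four endpoint parameters $(s_1, l_1, s_2, l_2)$ can be chosen so that all endpoint hypotheses hold simultaneously. The translation of the endpoint hypotheses to conditions on $(s,l)$ is dictated by convex combinations: the ceilings $l_1 \le s_1 + \half$ and $l_2 \le s_2$ combine to $l \le s + (1-\theta)/2 = s + \frac{1}{r} - \half$; the scaling-type constraints $2s_1 - l_1 > \frac{3}{2}$ and $4s_2 - l_2 > \frac{7}{4}$ combine to $2s - l > \frac{3}{2r}$; and the floor $l_1 \ge 1$ together with $s_2 > \half$ and $4s_2 - l_2 > \frac{7}{4}$ yield the lower bounds $l > \frac{3}{2r} - \half$ and $s > \frac{3}{2r} - \frac{1}{4}$. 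The principal obstacle is executing this linear-algebra check carefully enough to ensure that all four endpoint conditions are met by a single admissible choice; this is routine once one uses the two degrees of freedom remaining after fixing $s$ and $l$, and then bilinear interpolation (as in Lemma~\ref{Lemma1'}) closes the argument.
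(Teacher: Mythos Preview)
Your approach is the same as the paper's: interpolate between Lemma~\ref{Lemma3*} at $r=1{+}$ and Lemma~\ref{Lemma3''} at $r=2$. One point in your write-up is not quite right, though. You assert that the endpoint constraints $2s_1-l_1>\tfrac{3}{2}$ and $4s_2-l_2>\tfrac{7}{4}$ ``combine to $2s-l>\tfrac{3}{2r}$''. These two inequalities have different linear coefficients in $(s_i,l_i)$, so a convex combination with weights $(1-\theta,\theta)$ does not produce a condition on $2s-l$; your formula for the interpolated constraint is not a computation but a guess. The paper deals with this by replacing the hypothesis $4s_2-l_2>\tfrac{7}{4}$ of Lemma~\ref{Lemma3''} with the strictly stronger condition $2s_2-l_2>\tfrac{3}{4}$ (together with $l_2>\tfrac{1}{4}$, which is needed elsewhere anyway; note that $2s_2-l_2>\tfrac34$ and $s_2>\tfrac12$ imply $4s_2-l_2>\tfrac74$). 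With this strengthening, every endpoint constraint has the same linear form at $r=1{+}$ and $r=2$, and the convex combination genuinely yields $2s-l>\tfrac{3}{2r}$, $l>\tfrac{3}{2r}-\tfrac12$, $s>\tfrac{3}{2r}-\tfrac14$, and $l\le s+\tfrac{1}{r}-\tfrac12$. Once you insert this step your argument is complete and matches the paper's.
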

\begin{proof}
By Lemma \ref{Lemma3*} we obtain for $r=1+$ the estimate
$$ \|D^{-1} Q_{12}(Re \, \phi, Im \, \psi)\|_{H^r_{l_1-1,0}} \le
\|\phi\|_{X^r_{s_1,\frac{1}{r}+,\pm_1}} \|\psi\|_{X^r_{s_1,\frac{1}{r}+,\pm_2}}\, , $$
provided $l_1 \ge 1$ , $l_1 \le s_1+\half$ and $2s_1-l_1 > \frac{3}{2}$  so that
we may assume especially $ s_1 > \frac{5}{4} $ .
By Lemma \ref{Lemma3''} the following estimate applies:
$$ \|D^{-1} Q_{12}(Re \, \phi, Im \, \psi)\|_{H^{l_2 + 1,-\frac{1}{4}++}}
\lesssim \|\phi\|_{X^{s_2,\half+,\pm_1}} \|\psi\|_{X^{s_2,\half+,\pm_2}}\, , $$
if $l_2 \le s_2$ , $s_2 > \half$ and $4s_2-l_2 > \frac{7}{4}$ . The last
condition is strengthened by  $2s_2 - l_2 > \frac{3}{4}$ , which we assume. We
also restrict to $ l_2 > \frac{1}{4}$ , recalling that we needed this assumption
in previous lemmas already. Therefore by bilinear interpolation the result
follows as one easily checks.
\end{proof}

\begin{lemma}
\label{Lemma5'}
Let $1 < r \le 2$ . Assume $s \le l+1$ , $s > \frac{3}{2r}-\half$ , $l >
\frac{7}{4r}-\frac{5}{8} $ and $2l-s > \frac{2}{r}-\frac{5}{4}$ . Then the
following estimate applies:
$$ \|A_j A^j \phi\|_{H^r_{s-1,\frac{1}{r}-1++}} \lesssim
\|A_j\|^2_{H^r_{l,\frac{1}{r}+}} \|\phi\|_{H^r_{s,\frac{1}{r}+}} \, . $$
\end{lemma}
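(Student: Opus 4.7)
The proof will proceed by trilinear complex interpolation between the Fourier--Lebesgue endpoint Lemma \ref{Lemma4} (taken at $r = 1+$) and the classical $L^2$-based endpoint Lemma \ref{Lemma4''} (at $r = 2$), in exactly the pattern already employed in the proofs of Lemma \ref{Lemma1'} and Lemma \ref{Lemma3a}. The spaces $H^r_{s,b}$ form a complex interpolation scale in all three indices $r$, $s$, $b$, as recorded in the Preliminaries, so multilinear complex interpolation is available.

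Concretely, at $r_1 = 1+$ Lemma \ref{Lemma4} supplies
$$\|A_j A^j \phi\|_{H^{r_1}_{s_1-1,0}} \lesssim \|A_j\|_{H^{r_1}_{l_1,\frac{1}{r_1}+}}^2 \|\phi\|_{H^{r_1}_{s_1,\frac{1}{r_1}+}}$$
for any $(s_1,l_1)$ with $s_1 \ge 1$, $l_1 > \tfrac{9}{8}$, $2l_1-s_1 > \tfrac{3}{4}$, $s_1 \le l_1+1$, while at $r_2 = 2$ Lemma \ref{Lemma4''} gives
$$\|A_j A^j \phi\|_{H^{s_2-1,-\half++}} \lesssim \|A_j\|_{H^{l_2,\half+}}^2 \|\phi\|_{H^{s_2,\half+}}$$
for $s_2 > \tfrac{1}{4}$, $l_2 > \tfrac{1}{4}$, $2l_2-s_2 > -\tfrac{1}{4}$. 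Choosing $\theta = 2(1-\tfrac{1}{r}) \in (0,1)$ so that $\tfrac{1}{r} = (1-\theta)\cdot 1 + \theta\cdot\half$, interpolation yields the claimed bound with $s = (1-\theta)s_1 + \theta s_2$ and $l = (1-\theta)l_1 + \theta l_2$; the temporal indices interpolate as $(1-\theta)\cdot 0 + \theta\cdot(-\half+) = \tfrac{1}{r}-1++$ on the left and $(1-\theta)(1+) + \theta(\half+) = \tfrac{1}{r}+$ on the right, matching the statement.

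It remains to check that every target $(s,l)$ satisfying the hypotheses of the lemma is indeed realized by a valid convex combination of endpoint data. The infima over admissible $(s_1,l_1,s_2,l_2)$ are
$$s \ge (1-\theta)\cdot 1 + \theta\cdot\tfrac{1}{4} = \tfrac{3}{2r}-\tfrac{1}{2}, \quad l \ge (1-\theta)\cdot\tfrac{9}{8} + \theta\cdot\tfrac{1}{4} = \tfrac{7}{4r}-\tfrac{5}{8},$$
$$2l-s \ge (1-\theta)\cdot\tfrac{3}{4} + \theta\cdot(-\tfrac{1}{4}) = \tfrac{2}{r}-\tfrac{5}{4},$$
which are precisely the three strict inequalities in the statement. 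The constraint $s \le l+1$ is inherited from $s_1 \le l_1+1$; the $r=2$ feasibility region imposes no $s-l$ condition, so $s_2-l_2$ may be freely adjusted to produce any desired value of $s - l$ in the interpolated pair.

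The only subtlety requiring attention is that the three minimum conditions on $s$, $l$ and $2l-s$ must be simultaneously achievable by a single consistent choice of $(s_1,l_1,s_2,l_2)$; however, since the endpoint feasibility sets are open convex polyhedra whose binding linear functionals are exactly $s$, $l$ and $2l-s$, the Minkowski sum $(1-\theta)U_1 + \theta U_2$ equals the intersection of the three interpolated half-spaces, and no compatibility obstruction arises.
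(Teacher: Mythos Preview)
Your argument is correct and follows exactly the paper's own proof: trilinear interpolation between Lemma~\ref{Lemma4} at $r=1+$ (yielding the constraints $1\le s_1\le l_1+1$, $l_1>\tfrac{9}{8}$, $2l_1-s_1>\tfrac{3}{4}$) and Lemma~\ref{Lemma4''} at $r=2$ (with $s_2>\tfrac{1}{4}$, $l_2>\tfrac{1}{4}$, $2l_2-s_2>-\tfrac{1}{4}$). Your verification of the interpolated thresholds and of the $b$-indices is more explicit than what the paper writes, but the strategy is identical.
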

\begin{proof}
By Lemma \ref{Lemma4} we obtain in the case $r=1+$ :
$$ \|A_j A^j \phi\|_{H^r_{s_1-1,0}} \lesssim \|A_j\|^2_{H^r_{l_1,\frac{1}{r}+}}
\|\phi\|_{H^r_{s_1,\frac{1}{r}+}} \, , $$
provided $1 \le s_1 \le l_1+1$ , $l_1 > \frac{9}{8}$ and $2l_1-s_1 >
\frac{3}{4}$ . For $r=2$ we use Lemma \ref{Lemma4''} and obtain
$$ \|A_j A^j \phi\|_{H^{s_2-1,-\half++}} \lesssim \|A_j\|^2_{H^{l_2,\half+}}
\|\phi\|_{H^{s_2,\half+}} \, , $$
if $\frac{1}{4} < s_2 \le l_2+1$ , $l_2 > \frac{1}{4}$ and $2l_2-s_2 >
-\frac{1}{4}$ , so that by trilinear interpolation the proof is completed.
\end{proof}

\begin{lemma}
\label{Lemma4'}
Let $1 < r \le 2$ . Assume $l\le s+1$ , $s > \frac{7}{4r}-\frac {5}{8} $ , $l >
\frac{3}{2r}-\half$ and $2s-l > \frac{2}{r}-\frac{5}{4}$ . The following
estimate applies:
$$ \|A_j |\phi|^2 \|_{H^r_{l-1,\frac{1}{2r}-\half++}} \lesssim
\|A_j\|_{H^r_{l,\half+\frac{1}{2r}+}} \|\phi\|^2_{H^r_{s,\frac{1}{r}+}} \, . $$
\end{lemma}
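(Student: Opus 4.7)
The proof plan is to mimic Lemma \ref{Lemma5'} exactly, interpolating trilinearly between the estimate at $r=1+$ (supplied by Lemma \ref{Lemma5}) and the estimate at $r=2$ (the last unnumbered lemma immediately preceding this one, which gives the $|\phi|^2 A_j$ bound with the Lorenz-style weights). Concretely, at $r=1+$ Lemma \ref{Lemma5} yields
\[ \||\phi|^2 A_j\|_{H^r_{l_1-1,0}} \lesssim \|A_j\|_{H^r_{l_1,\frac{1}{r}+}} \|\phi\|^2_{H^r_{s_1,\frac{1}{r}+}} \]
under $1 \le l_1 \le s_1+1$, $s_1 > \frac{9}{8}$, $2s_1-l_1 > \frac{3}{4}$; at $r=2$ the preceding lemma gives
\[ \|A_j|\phi|^2\|_{H^{l_2-1,-\frac{1}{4}++}} \lesssim \|A_j\|_{H^{l_2,\frac{3}{4}+}} \|\phi\|^2_{H^{s_2,\half+}} \]
under $s_2 > \frac{1}{4}$, $l_2 > \frac{1}{4}$, $2s_2-l_2 > -\frac{1}{4}$ and $l_2 \le s_2+1$.

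The next step is to apply complex trilinear interpolation between these two estimates with parameter $\theta \in (0,1)$ chosen so that $\frac{1}{r} = (1-\theta)\cdot 1 + \theta\cdot\frac{1}{2}$, i.e. $\theta = 2 - \frac{2}{r}$. The interpolation acts independently on the three slots: the LHS space becomes $H^r_{l-1,\,(1-\theta)\cdot 0 + \theta(-\frac{1}{4})++} = H^r_{l-1,\,\frac{1}{2r}-\half++}$; the $A_j$ slot becomes $H^r_{l,\,(1-\theta)\cdot 1 + \theta\cdot\frac{3}{4}+} = H^r_{l,\,\half + \frac{1}{2r}+}$; and each $\phi$ slot becomes $H^r_{s,\,(1-\theta)\cdot 1 + \theta\cdot\half+} = H^r_{s,\,\frac{1}{r}+}$, so the target norms match the claim exactly.

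It remains only to check that the hypotheses on $s$, $l$, $2s-l$ interpolate to the advertised ones. Writing $s = (1-\theta)s_1+\theta s_2$, etc., a direct computation gives:
\[ l > (1-\theta)\cdot 1 + \theta\cdot \tfrac{1}{4} = \tfrac{3}{2r}-\tfrac{1}{2}, \qquad s > (1-\theta)\cdot \tfrac{9}{8} + \theta\cdot \tfrac{1}{4} = \tfrac{7}{4r}-\tfrac{5}{8}, \]
\[ 2s-l > (1-\theta)\cdot \tfrac{3}{4} + \theta\cdot(-\tfrac{1}{4}) = \tfrac{2}{r}-\tfrac{5}{4}, \]
and $l \le s+1$ is preserved under any convex combination. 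These are precisely the hypotheses of the lemma, so any admissible $(s,l)$ can be realized as an interpolate of $(s_1,l_1)$ and $(s_2,l_2)$ satisfying the endpoint conditions.

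The only delicate point — and the one worth spelling out carefully — is the bookkeeping: one must verify that the strict inequalities at the two endpoints can simultaneously be made strict by an arbitrarily small loss in $\epsilon$, and that for each target $(s,l,r)$ one can actually choose $(s_1,l_1)$ and $(s_2,l_2)$ in the admissible ranges with the correct convex combination. This is routine once the endpoint domains are observed to be open half-spaces defined by the same linear inequalities, so no obstruction arises, and the interpolation goes through to produce the stated estimate.
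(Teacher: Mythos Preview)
Your proof is correct and follows exactly the paper's approach: interpolate trilinearly between Lemma~\ref{Lemma5} at $r=1+$ and the $r=2$ estimate for $A_j|\phi|^2$, then read off the interpolated exponents and hypotheses. (The paper's own proof actually cites ``Lemma~\ref{Lemma5'}'' for the $r=2$ endpoint, but that is a misprint; the correct input is the unnumbered lemma you identified, and your parameter bookkeeping matches the intended argument precisely.)
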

\begin{proof}
We apply Lemma \ref{Lemma5} in the case $r=1+$ and obtain
$$ \|A_j |\phi|^2 \|_{H^r_{l_1-1,0}} \lesssim \|A_j\|_{H^r_{l_1,\frac{1}{r}+}}
\|\phi\|^2_{H^r_{s_1,\frac{1}{r}+}} \, , $$
assuming $1 \le l_1 \le s_1+1$ , $s_1 > \frac{9}{8}$ and $2s_1-l_1 >
\frac{3}{4}$ . Lemma \ref{Lemma5'} implies
$$ \|A_j |\phi|^2 \|_{H^{l_2-1,-\frac{1}{4}++}} \lesssim
\|A_j\|_{H^{l_2,\frac{3}{4}+}} \|\phi\|^2_{H^{s_2,\half+}} \, , $$
if $\frac{1}{4} < l_2 \le s_2+1$ , $s_2 > \frac{1}{4}$ and $2s_2-l_2 > -
\frac{1}{4}$ . Trilinear interpolation implies the result.
\end{proof}

\begin{lemma}
\label{Lemma8'9'}
Assume $ 1 < r \le 2$ , $s > \frac{1}{r}$ , $b > \frac{1}{r}$ . The following
estimates apply:
\begin{align*}
\|A_0 \partial_t \phi\|_{H^r_{s-1,0}} & \lesssim
\|A_0\|_{L^{\infty}((0,T),\dot{H}^{\max(s-1,1+)} \cap \dot{H}^{0+})}
\|\partial_t \phi\|_{H^r_{s-1,b}} \, , \\
\|(\partial_t A_0)  \phi\|_{H^r_{s-1,0}} & \lesssim \|\partial_t
A_0\|_{L^{\infty}((0,T),\dot{H}^{\max(s-1,0+)} \cap \dot{H}^{0+})} \|
\phi\|_{H^r_{s,b}} \, ,\\
\|A_0^2 \phi\|_{H^r_{s-1,\frac{1}{r}-1++}} & \lesssim
\|A_0\|^2_{L^{\infty}((0,T),\dot{H}^{\max(s-\half,1)+} \cap \dot{H}^{0+})}
\|\phi\|_{H^r_{s,\frac{1}{r}+}} \, ,
\end{align*}
\end{lemma}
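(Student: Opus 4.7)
The plan is to obtain all three estimates by (bi- or tri-linear) complex interpolation between the corresponding endpoint estimates at $r_1$ close to $1$ and at $r_2 = 2$, in exact analogy with Lemmas \ref{Lemma1'}, \ref{Lemma3a}, \ref{Lemma5'}, \ref{Lemma4'}. Recall that for the wave-Sobolev spaces one has
$(H^{r_0}_{s_0,b_0}, H^{r_1}_{s_1,b_1})_{[\theta]} = H^r_{s,b}$ with $\tfrac{1}{r} = \tfrac{1-\theta}{r_0} + \tfrac{\theta}{r_1}$, $s = (1-\theta)s_0 + \theta s_1$, $b = (1-\theta)b_0 + \theta b_1$, and the $L^\infty_t \dot H^\sigma_x$ data space interpolates in $\sigma$ in the standard way. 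Given $s > \frac{1}{r}$, choose $s_1 > 1$ and $s_2 > \frac{1}{2}$ with $s = (1-\theta)s_1 + \theta s_2$ for the $\theta$ associated to $r$; then $s_1 - 1$ and $s_2 - \frac{1}{2}$ can be taken arbitrarily small and positive.

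For the first estimate, Lemma \ref{Lemma8} (applied with $r_1$ arbitrarily close to $1$, $s_1 > 1$) gives
\[
\|A_0 \partial_t \phi\|_{H^{r_1}_{s_1-1,0}} \lesssim \|A_0\|_{L^\infty_t(\dot H^{\max(s_1-1,1+)} \cap \dot H^{0+})} \|\partial_t \phi\|_{H^{r_1}_{s_1-1,b}},
\]
and Lemma \ref{Lemma8''} gives the same inequality at $r_2 = 2$ with $s_2$ in place of $s_1$. Interpolating bilinearly (the multiplication operator is bilinear in $A_0$ and $\partial_t \phi$) yields the desired bound at $(r,s)$ with $A_0$ in $L^\infty_t(\dot H^{\max(s-1,1+)} \cap \dot H^{0+})$, once one checks that $\max(\,\cdot\,,1+)$ interpolates correctly (splitting into the subcases $s \le 2$ and $s > 2$ and choosing $s_1, s_2$ in the same subcase so that the maximum is linear). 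The second estimate is obtained in exactly the same way by interpolating Lemma \ref{Lemma9} against Lemma \ref{Lemma9''}, giving $\partial_t A_0 \in L^\infty_t(\dot H^{\max(s-1,0+)} \cap \dot H^{0+})$.

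The third estimate is the most delicate because the target $b$-index $\tfrac{1}{r}-1++$ is nontrivial. At $r_1$ near $1$, Lemma \ref{Lemma10} gives the estimate with $b$-index $0 = \tfrac{1}{r_1}-1$ and data norm $\dot H^{0+} \cap \dot H^{s_1 - 1/2+}$; at $r_2 = 2$ one uses Lemma \ref{Lemma10''}, whose conclusion $H^{s_2-1,0}$ trivially embeds into $H^{s_2-1, -1/2++}$ (i.e., $b_2 = -\tfrac{1}{2}++ = \tfrac{1}{r_2}-1++$), with data norm $\dot H^{0+} \cap \dot H^{\max(s_2-\frac{1}{2},1)+}$. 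Since $(1-\theta)\cdot 0 + \theta \cdot(-\tfrac{1}{2}) = \tfrac{1}{r}-1$, trilinear interpolation (in $A_0$, $A_0$, $\phi$) produces precisely the claimed bound, provided one again arranges the subcases $s \lessgtr \tfrac{3}{2}$ so that $\max(s-\tfrac{1}{2},1)$ interpolates linearly.

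The main obstacle is the bookkeeping for the $\max(\cdot,\cdot)$ exponents on $A_0$ and $\partial_t A_0$: the Calder\'on interpolation of $\dot H^{\sigma}$ is linear in $\sigma$, whereas the target exponents are piecewise linear. This is handled in the standard way by splitting into the appropriate subcases of $s$ and choosing the endpoint parameters $s_1, s_2$ in the same subcase, so that on each piece the maximum is attained by the same argument at both endpoints. The condition $s > \frac{1}{r}$ then follows automatically from $s_1 > 1$, $s_2 > \frac{1}{2}$, and $b > \frac{1}{r}$ interpolates from the corresponding endpoint conditions $b > 1$ and $b > \frac{1}{2}$.
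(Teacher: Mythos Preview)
Your proposal is correct and follows exactly the approach taken in the paper: bilinear interpolation between Lemma~\ref{Lemma8} and Lemma~\ref{Lemma8''} for the first estimate, between Lemma~\ref{Lemma9} and Lemma~\ref{Lemma9''} for the second, and between Lemma~\ref{Lemma10} and Lemma~\ref{Lemma10''} for the third. Your additional bookkeeping regarding the piecewise-linear $\max(\cdot,\cdot)$ exponents and the $b$-index interpolation for the third estimate is a helpful elaboration of what the paper leaves implicit.
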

\begin{proof}
Using  bilinear interpolation between the estimates in Lemma \ref{Lemma8} and
Lemma \ref{Lemma8''} we obtain the first inequality , interpolation between the
estimates in Lemma \ref{Lemma9} and Lemma \ref{Lemma9''} gives the second
inequality. Similarly Lemma \ref{Lemma10} and Lemma \ref{Lemma10''} imply the
last inequality.
\end{proof}

Combining Lemma \ref{Lemma8'9'}, Lemma \ref{Lemma6}, Lemma \ref{Lemma7}, and
Cor. \ref{Cor.1} we obtain
\begin{Cor}
\label{Cor.2}
There exists a polynomial bound for $\|A_0 \partial_t \phi\|_{H^r_{s-1,0}}$ in
terms of $\|\phi\|_{H^r_{s,b}}$ and $\|A_j\|_{H^r_{s-1,b}}$. 
A similar bound applies for $\|(\partial_t A_0) \phi\|_{H^r_{s-1,0}}$ and also 
$\|A_0^2  \phi\|_{H^r_{s-1,\frac{1}{r}-1+}}$.
\end{Cor}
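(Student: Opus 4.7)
The plan is to chain the estimates of Lemma \ref{Lemma8'9'}, Lemma \ref{Lemma6}, Lemma \ref{Lemma7} and Corollary \ref{Cor.1} in order to reduce each of the three left-hand sides to a polynomial in $\|\phi\|_{H^r_{s,b}}$ and $\|A_j\|_{H^r_{s-1,b}}$. Take the first claim, $\|A_0\partial_t\phi\|_{H^r_{s-1,0}}$, as the representative case. Apply the first inequality of Lemma \ref{Lemma8'9'} to bound it by the product
\[
\|A_0\|_{L^\infty_t(\dot H^{\max(s-1,1+)}\cap\dot H^{0+})}\,\|\partial_t\phi\|_{H^r_{s-1,b}}.
\]
The second factor is immediately controlled by $\|\phi\|_{H^r_{s,b}}$ via the first-order splitting $\phi=\phi_++\phi_-$, $\partial_t\phi=i\Lambda(\phi_+-\phi_-)$, which has been the standing reformulation throughout the section.

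Next I would estimate the $A_0$-factor. Since by construction $A_0(t)=\int_0^tB_0(\tau)\,d\tau+a_0$, with $a_0\in H^1$ coming from the variational problem preceding Lemma \ref{Lemma1}, one has $\|A_0\|_{L^\infty_t\dot H^\sigma}\lesssim T\,\|B_0\|_{L^\infty_t\dot H^\sigma}+\|a_0\|_{\dot H^\sigma}$. Lemma \ref{Lemma6} delivers a polynomial bound of $\|B_0\|_{L^\infty_t\dot H^\sigma}$ in terms of $\|\phi\|_{L^\infty_t H^s}$ and $\|A_j\|_{L^\infty_t H^{\max(s-1,0)}}$ (part 1 for $s>1$, part 2 for $\frac{1}{2}<s\le1$), and Corollary \ref{Cor.1} packages this into a low-regularity estimate for $A_0$ at $\sigma\in(0,\min(2s-1,1)]$. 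Lemma \ref{Lemma7}, which invokes the elliptic equation \eqref{12} directly, then bootstraps this bound up to the higher Sobolev index $\max(s-1,1+)$ demanded by Lemma \ref{Lemma8'9'}.

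The last step is to re-express the $L^\infty_t$ classical Sobolev norms that appear on the right-hand side as $\|\phi\|_{H^r_{s,b}}$ and $\|A_j\|_{H^r_{s-1,b}}$. Use the time-regularity embedding $X^r_{s,b}[0,T]\hookrightarrow C^0([0,T],\hat H^{s,r})$ valid for $b>\frac{1}{r}$, followed by a H\"older/Hausdorff--Young-type embedding of $\hat H^{s,r}$ into a classical $L^2$-Sobolev space $H^{s'}$ (possible for $1<r\le 2$ at the cost of a small regularity loss governed by $1-\frac{2}{r'}$ in dimension two). The standing hypothesis $s>\frac{13}{8r}-\frac{5}{16}$ of Theorem \ref{Theorem1} is calibrated so that enough classical Sobolev regularity of $\phi$ and $A_j$ survives this loss to feed into Lemmas \ref{Lemma6} and \ref{Lemma7}. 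The remaining two inequalities of the corollary follow by an identical bootstrap, starting respectively from the second and third inequalities of Lemma \ref{Lemma8'9'}; the cubic term $A_0^2\phi$ only requires the slightly higher index $\dot H^{\max(s-\frac{1}{2},1)+}$ of $A_0$, which Lemma \ref{Lemma7} still provides.

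The step I expect to be the main obstacle is this last translation: ensuring that the exponent $s'$ surviving the Hausdorff--Young passage from $\hat H^{s,r}$ to $H^{s'}$ is still large enough to be admissible for Lemma \ref{Lemma6} (which for $r$ close to $1$ requires $s'>\frac{1}{2}$) and for Lemma \ref{Lemma7}. This is the point at which the numerical thresholds in Theorem \ref{Theorem1} must be checked directly against the elliptic regularity theory for $A_0$, and it is the reason the corollary yields only a \emph{polynomial} bound, with both degree and weights depending on the precise regime of $(s,l,r)$.
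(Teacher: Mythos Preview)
Your approach is exactly the paper's: the paper's entire proof is the single sentence ``Combining Lemma \ref{Lemma8'9'}, Lemma \ref{Lemma6}, Lemma \ref{Lemma7}, and Cor.\ \ref{Cor.1} we obtain [the result],'' and you have simply unpacked what that combination means.

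One comment on the obstacle you flag. The paper does not address the passage from $\widehat H^{s,r}$ to classical $H^{s'}$ at all; it treats the chaining as self-evident. Your concern is legitimate: the H\"older/Hausdorff--Young embedding $\widehat H^{s,r}\hookrightarrow H^{s'}$ in two dimensions costs $s-s' > \tfrac{2}{r}-1$, and for $r$ close to $1$ the hypothesis $s>\tfrac{21}{16}$ leaves only $s'<\tfrac{5}{16}$, which falls below the threshold $s'>\tfrac12$ required by part 2 of Lemma \ref{Lemma6}. So either the elliptic Lemmas \ref{Lemma6}--\ref{Lemma7} must be re-proved directly in the $\widehat H^{\cdot,r}$ scale (their proofs are Sobolev-multiplication arguments that should adapt), or the paper is tacitly restricting the full claim to a range of $r$ where the embedding loss is harmless. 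The paper's one-line proof gives no indication which of these is intended; your write-up is already more careful than the original on this point.
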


\begin{proof}[Proof of Theorem \ref{Theorem1}]
This is an application of Theorem \ref{Theorem0.3} to   the system (\ref{20}) and (\ref{21'}). The necessary estimates were
given in Lemma \ref{Lemma1'} - Lemma \ref{Lemma4'} and Corollary \ref{Cor.2}. 
We just have to check the assumptions. The most restrictive condition on $l$ is
$l > \frac{7}{4r}-\frac{5}{8}$ (cf. Lemma \ref{Lemma5'}). Thus the condition
$2s-l > \frac{3}{2r}$ (cf. Lemma \ref{Lemma3a}) implies $s >
\frac{13}{8r}-\frac{5}{16}$. Moreover we assumed $s> \frac{3}{2r}-\frac{1}{4}$
(cf. Lemma \ref{Lemma3a}), which is weaker, and $2l-s >
\frac{2}{r}-\frac{5}{4}$. The remaining conditions are easily seen to be weaker.
\end{proof}

\section{MKG in Lorenz gauge}

We can reformulate the system (\ref{1}),(\ref{2}) in Lorenz gauge
$$\partial^{\mu} A_{\mu} = 0 $$ as follows:
$$\square A_{\mu} = \partial^{\nu} \partial_{\nu} A_{\mu} =
\partial^{\nu}(\partial^{\mu} A_{\nu} - F_{\mu \nu}) = -\partial^{\nu} F_{\mu
\nu} = - j_{\mu} \, , $$
thus (using the notation $\partial = (\partial_0,\partial_1,...,\partial_n)$):
\begin{equation}
\label{16}
\square A = -Im (\phi \overline{\partial \phi}) - A|\phi|^2 =: N(A,\phi) 
\end{equation}
and
\begin{align*}
m^2 \phi & = D_{\mu} D^{\mu} \phi = \partial_{\mu} \partial^{\mu} \phi -iA_{\mu}
\partial^{\mu} \phi -i\partial_{\mu}(A^{\mu} \phi) - A_{\mu}A^{\mu} \phi \\
& = \square \phi - 2i A^{\mu} \partial_{\mu} \phi - A_{\mu} A^{\mu} \phi \,
\end{align*}
thus
\begin{equation}
\label{17}
(\square -m^2) \phi = 2i A^{\mu} \partial_{\mu} \phi + A_{\mu} A^{\mu} \phi =:
M(A,\phi) \, .
\end{equation}

As in the case of the Coulomb gauge we may also consider the equivalent first
order (in t) system. Defining
$ A_{\pm} := \frac{1}{2}(A \pm (i\Lambda)^{-1} A_t) \, , $
so that $A=A_+ + A_-$ and $A_t = i\Lambda(A_+-A_-)$, and
$\phi_{\pm} := \frac{1}{2}(\phi \pm (i \Lambda)^{-1} \phi_t) $ , so that $\phi =
\phi_+ + \phi_-$ and $\phi_t = i \Lambda (\phi_+ - \phi_-)$. 

We transform (\ref{16}),(\ref{17}) into
\begin{align}
\label{2.1}
(i\partial_t \pm \Lambda) \phi_{\pm} & = -(\pm 2 \Lambda)^{-1} M(A,\phi)
-(m^2+1)\phi\\
\label{2.2}
(i\partial_t \pm D) A_{\pm} & = -(\pm 2 \Lambda)^{-1} N(A,\phi) - A_{\pm}\, .
\end{align}

We follow Selberg-Tesfahun \cite{ST} 
and  split the spatial part $\mathbf A=(A_1,A_2)$ of the potential into
divergence-free and curl-free parts and a smoother part:
\begin{equation}\label{SplitA}  
\mathbf A = \mathbf A^{\text{df}} + \mathbf A^{\text{cf}} + \Lambda^{-2} \mathbf
A,
\end{equation}
where
\begin{align*}
\mathbf A^{\text{df}}&=  \Lambda^{-2} (\partial_2(\partial_1 A_2-\partial_2
A_1),-\partial_1(\partial_1 A_2-\partial_2 A_1))
\\
\mathbf A^{\text{cf}}&= -\Lambda^{-2} \nabla (\nabla \cdot \mathbf A).
\end{align*}
Using \eqref{SplitA} we write
	\begin{align}
\label{Aalpha}
	A^\alpha \partial_\alpha \phi
	&=
	\left( - A_0 \partial_t \phi
	+ \mathbf A^{\text{cf}} \cdot \nabla \phi \right)
	+ \mathbf A^{\text{df}} \cdot \nabla \phi +  \Lambda^{-2} \mathbf A  \cdot
\nabla \phi
	\end{align}

We have (with $R_j := \Lambda^{-1}\partial_j$):
	\begin{align}
\nonumber
	\mathbf A^{\text{df}} \cdot \nabla \phi
	&= (\Lambda^{-2}\partial_2(\partial_1A_2-\partial_2 A_1)) \partial_1 \phi -
(\Lambda^{-2}\partial_1(\partial_1 A_2 - \partial_2 A_1)) \partial_2 \phi
	\\ \nonumber
	&=-  Q_{12}(\Lambda^{-2}(\partial_1 A_2-\partial_2 A_1),\phi)
	\\ \label{P2}
	&=
	-  Q_{12}\left(\Lambda^{-1}( R_1 A_2 -R_2 A_1), \phi\right).
	\end{align}

Next we use the Lorenz gauge, $\partial_t A_0=\nabla \cdot \mathbf A  $,  to write
	\begin{align*}
	\mathbf A^{\text{cf}} \cdot \nabla \phi&
	=-\Lambda^{-2} \partial^i(\partial_t A_0) \partial_i \phi=
	- \partial_t ( \Lambda^{-1} R^i A_0)\partial_i \phi.
	\end{align*}
	We can also write
	\begin{align*}
	A_0 \partial_t \phi 
	&=-\Lambda^{-2}  \partial_i\partial^i A_0 \partial_t \phi+ \Lambda^{-2}  A_0
\partial_t \phi
	\\
	&=-\partial_i(\Lambda^{-1} R^i A_0) \partial_t \phi
	+  \Lambda^{-2}  A_0 \partial_t \phi.
	\end{align*}
	Combining the above identities, we get
	\begin{align}
\label{A}
	 -A_0 \partial_t \phi +  \mathbf A^{\text{cf}} \cdot \nabla \phi
	&=Q_{i0}(\Lambda^{-1} R^i A_0, \phi)- \Lambda^{-2}  A_0 \partial_t \phi \, ,
	\end{align}
where we define the null form
$$Q_{i0}(u,v) = -\partial_t u \partial_i v + \partial_i u \partial_t v \, . $$
We may ignore the factor $R^i$ . By the definition of the null form we obtain with $D_i := \frac{\partial_i}{i}$ with symbol $\xi_i$ :
\begin{align*}
&Q_{i0}(\Lambda^{-1} A_0,\phi) \\
& = \sum_{\pm_1,\pm_2} (\pm_1 1)(\pm_2 1)\left(-A_{0,\pm_1} (\pm_2 D_i) \phi_{\pm_2} + (\pm_1 D_i) \Lambda^{-1} A_{0,\pm_1} \Lambda \phi_{\pm_2}\right) \, .
\end{align*}
Its symbol is given by
$$ \sum_{\pm_1,\pm_2} (\pm_1 1)(\pm_2 1) \left(-(\pm_2(\xi - \eta)_i) \pm_1 \eta_i\langle \eta \rangle^{-1} \langle \xi - \eta \rangle  \right)\, . $$
This symbol is bounded by
\begin{align}
\label{38}
&\left| - \frac{(\pm_2(\xi-\eta))_i |\eta|}{\langle \eta \rangle  } + \frac{(\pm_1 \eta)_i |\xi-\eta|}{\langle \eta \rangle} \right| + |\xi-\eta| \frac{\langle \eta \rangle - |\eta|}{\langle \eta \rangle} + |\eta| \frac{\langle \xi - \eta \rangle - |\xi-\eta|}{\langle \eta \rangle} \\
\label{Qi0}
& \lesssim \frac{ |\xi-\eta| |\eta|}{\langle \eta \rangle} \angle(\pm_1 \eta,\pm_2 (\xi-\eta)) + \frac{|\xi-\eta|}{\langle \eta \rangle} + 1 = : R_1+R_2+R_3\,.
\end{align}

If we use the estimate for the angle in Lemma \ref{Lemmaangle} below (for $\alpha,\beta,\gamma=\half$) for the term $R_1$ we obtain by (\ref{Qi0}) the following estimate:
\begin{align}
\nonumber
Q_{i0}(\Lambda^{-1}u,v)  \precsim &D^{\half}_-(\Lambda^{-\half}u Dv) + D^{\half}_-(u D^{\half}v) + (D^{\half}_- \Lambda^{-\half}u) Dv + D^{\half}_-u D^{\half}v \\
\label{40}
&+ \Lambda^{-\half} u (D^{\half}_- Dv) + u (D^{\half}_- D^{\half}v) + \Lambda^{-1}u \Lambda v + uv
\end{align}

The estimate for the angle in the following Lemma was
proven in \cite{AFS1}, Lemma 7:
\begin{lemma}
	\label{Lemmaangle}
	Let $\alpha,\beta,\gamma \in [0,\half]$ , $\tau,\lambda \in \R$ , $ \xi,\eta
\in \R^2 $, $\xi,\eta \neq 0$ . Then the following estimate applies for all
signs $\pm_1,\pm_2$ :
$$\angle{(\pm_1 \xi, \pm_2 \eta)} \lesssim
\left(\frac{\langle|\tau+\lambda|-|\xi+\eta|\rangle}{\min(\langle\xi
\rangle,\langle \eta \rangle)}\right)^{\alpha} +\left(\frac{\langle -\tau
\pm_1|\xi|\rangle}{\min(\langle\xi\rangle,\langle \eta \rangle)}\right)^{\beta}
+\left(\frac{\langle -\lambda \pm_2|\eta|\rangle}{\min(\langle\xi
\rangle,\langle \eta \rangle)}\right)^{\gamma} \, .
 $$
\end{lemma}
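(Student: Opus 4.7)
\textbf{Proof plan for Lemma \ref{Lemmaangle}.}

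The proof follows the standard reduction to the law of cosines, in the spirit of \cite{AFS1}. Write $\theta := \angle(\pm_1 \xi, \pm_2 \eta)$, $m := \min(\langle \xi \rangle, \langle \eta \rangle)$, and abbreviate the three numerators on the right-hand side by $w_0, w_1, w_2$. Since $\theta \le \pi$ is trivially bounded by a constant, we may assume $w_j/m \le 1$ for $j=0,1,2$: otherwise, the corresponding summand already dominates. Likewise, we may assume $\min(|\xi|,|\eta|) \gtrsim 1$, so that $\min(|\xi|,|\eta|) \sim m$; the low-frequency regime is handled by the trivial bound.

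The starting point is the identity
$$(|\xi|+|\eta|)^2 - |\pm_1 \xi + \pm_2 \eta|^2 = 2|\xi||\eta|\bigl(1 - \cos\theta\bigr) \sim |\xi||\eta|\,\theta^2,$$
obtained by expanding both squares and using $\cos\theta = (\pm_1\pm_2)\,\xi\cdot\eta/(|\xi||\eta|)$. Since $|\xi|+|\eta|+|\pm_1\xi+\pm_2\eta| \sim \max(|\xi|,|\eta|)$, factoring the difference of squares yields the preliminary bound
$$\theta^2 \lesssim \frac{|\xi|+|\eta|-|\pm_1\xi+\pm_2\eta|}{\min(|\xi|,|\eta|)},$$
reducing matters to an algebraic bound on the numerator by $w_0+w_1+w_2$.

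The algebraic step uses the defining relations $\pm_1|\xi| = \tau + \sigma_1$ and $\pm_2|\eta| = \lambda + \sigma_2$, where $|\sigma_j| \lesssim w_j$. In the \emph{aligned case} $\pm_1=\pm_2$ we get $|\xi|+|\eta| = (\tau+\lambda)+(\sigma_1+\sigma_2)$ and $|\pm_1\xi+\pm_2\eta| = |\xi+\eta|$, so a short case distinction on the sign of $\tau+\lambda$ (when $\tau+\lambda \ge 0$ use $w_0$ directly; when $\tau+\lambda < 0$ the quantity $|\xi|+|\eta|$ itself is controlled by $\sigma_1+\sigma_2$) gives $|\xi|+|\eta|-|\xi+\eta| \lesssim w_0+w_1+w_2$. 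In the \emph{opposed case} $\pm_1\ne\pm_2$ one has $|\pm_1\xi+\pm_2\eta| = |\xi-\eta|$, so $w_0$ (which involves $|\xi+\eta|$) does not appear directly. Here the key is the elementary identity
$$(|\xi|+|\eta|)^2 - |\xi-\eta|^2 \;=\; |\xi+\eta|^2 - (|\xi|-|\eta|)^2,$$
which, after dividing by $|\xi|+|\eta|+|\xi-\eta| \sim \max(|\xi|,|\eta|)$, shows $|\xi|+|\eta|-|\xi-\eta| \sim |\xi+\eta|-\bigl||\xi|-|\eta|\bigr|$. Combining this with $||\xi|-|\eta|| = |(\tau+\lambda)+(\sigma_1+\sigma_2)|$ and an analogous sign-of-$(\tau+\lambda)$ case analysis again yields the bound by $w_0+w_1+w_2$.

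Inserting this into the law-of-cosines reduction gives
$$\theta \lesssim (w_0/m)^{1/2} + (w_1/m)^{1/2} + (w_2/m)^{1/2}.$$
To obtain arbitrary $\alpha, \beta, \gamma \in [0,\half]$, I interpolate each summand separately against the trivial bound $\theta\lesssim 1$: for each $j$, either $w_j/m > 1$, in which case $(w_j/m)^{\alpha_j} \ge 1 \gtrsim \theta$, or $w_j/m \le 1$, in which case $(w_j/m)^{1/2} \le (w_j/m)^{\alpha_j}$. The main obstacle is the opposed-sign algebraic case, where the form of $w_0$ (involving $|\xi+\eta|$ rather than the actually appearing $|\xi-\eta|$) must be bridged; the parallelogram-type identity above is exactly what makes this work.
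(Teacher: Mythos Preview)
The paper does not prove this lemma; it simply records that the estimate ``was proven in \cite{AFS1}, Lemma 7'' and moves on. Your proposal therefore supplies an argument where the paper gives none. The law-of-cosines reduction you outline is the standard route (and is essentially what \cite{AFS1} does), and your handling of both the aligned and opposed sign cases is correct.

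One small correction: in the opposed case you assert the two-sided equivalence $|\xi|+|\eta|-|\xi-\eta| \sim |\xi+\eta|-\bigl||\xi|-|\eta|\bigr|$. This is false in general (take $\xi=-\eta$: the left side is $2|\xi|$, the right side is $0$). What is true, and is all you need, is the one-sided inequality
\[
|\xi|+|\eta|-|\xi-\eta| \;\le\; |\xi+\eta|-\bigl||\xi|-|\eta|\bigr|,
\]
which follows from your parallelogram identity together with $|\xi+\eta|\le|\xi|+|\eta|$ and $\bigl||\xi|-|\eta|\bigr|\le|\xi-\eta|$. With this adjustment the rest of your argument (bounding $|\xi+\eta|-\bigl||\xi|-|\eta|\bigr|$ via the reverse triangle inequality, and the interpolation with the trivial bound $\theta\lesssim 1$ to pass from exponent $\tfrac12$ to arbitrary $\alpha,\beta,\gamma\in[0,\tfrac12]$) goes through cleanly.
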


Similarly as in  Lemma \ref{Lemma5.1}  we can also estimate the (modified) nullform $q_{0j}(u,v)$ .
\begin{lemma}
\label{Lemma5.2}
Assume $0 \le \alpha_1,\alpha_2 $ ,  $\alpha_1+\alpha_2 \ge \frac{1}{r}$ and $ b > \frac{1}{r}$ . The following estimate applies
$$ \|q_{0j}^{\pm}(u,v)\|_{H^r_{0,0}} \lesssim \|u\|_{X^r_{\alpha_1,b,\pm_1}} \|v\|_{X^r_{\alpha_2,b,\pm_2}} \, . $$
where
$$q_{0j}^{\pm}(u,v) := -u (D^{-1}\partial_j v) \pm (D^{-1}\partial_j u)  v \, . $$
\end{lemma}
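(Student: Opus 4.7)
The strategy is to mirror the proof of Lemma \ref{Lemma5.1} essentially line by line, the only new ingredient being the symbol bound for the modified null form $q_{0j}^\pm$. As before, using inhomogeneous norms I may assume $\alpha_1+\alpha_2=\tfrac{1}{r}$, and by bilinear complex interpolation I may further reduce to the extremal case $\alpha_1=\tfrac{1}{r}$, $\alpha_2=0$. By the transfer principle (Proposition \ref{Prop.0.1}) it suffices to prove the estimate for free waves $u(t,x)=e^{\pm_1 itD}u_0^{\pm_1}(x)$, $v(t,x)=e^{\pm_2 itD}v_0^{\pm_2}(x)$, which after plugging in $\tilde u=c\,\delta(\tau\mp_1|\xi|)\widehat{u_0^{\pm_1}}$ and similarly for $\tilde v$ reduces $\|q_{0j}^\pm(u,v)\|_{H^r_{0,0}}$ to a single $\delta$-integral in $\eta$ weighted by the symbol of $q_{0j}^\pm$ evaluated on $(\eta,\xi-\eta)$.

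The key symbol bound is this: the space-time Fourier multiplier of $q_{0j}^\pm(u,v)$ on these free waves is (up to constants)
$$
\pm_1\frac{\eta_j}{|\eta|}\;\mp\;\pm_2\frac{(\xi-\eta)_j}{|\xi-\eta|},
$$
which, being a sum/difference of unit vectors, is controlled by $\angle(\pm_1\eta,\pm \pm_2(\xi-\eta))$ and hence by the Foschi--Klainerman bound
$$
|q_{0j}^\pm(\eta,\xi-\eta)|\;\lesssim\;\frac{|\xi|^{1/2}\,R^{1/2}}{|\eta|^{1/2}\,|\xi-\eta|^{1/2}},
$$
where $R=|\eta|+|\xi-\eta|-|\xi|$ in the elliptic case and $R=|\xi|-\big||\eta|-|\xi-\eta|\big|$ in the hyperbolic case. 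In either case $R$ equals $\big||\tau|-|\xi|\big|$ on the support of the $\delta$-function, exactly as in the proof of Lemma \ref{Lemma5.1}. This is the pointwise null-form estimate \cite{FK}, Lemma 13.2, applied to $q_{0j}$ instead of $Q_{12}$.

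With this symbol bound in hand, the proof continues identically to Lemma \ref{Lemma5.1}. Applying H\"older's inequality reduces the matter to showing $\sup_{\tau,\xi}I\lesssim 1$, where
$$
I=|\xi|^{1/2}\big||\tau|-|\xi|\big|^{1/2}\!\left(\int\!\delta(\tau\mp_1|\eta|\mp_2|\xi-\eta|)\,|\eta|^{-1-\frac{r}{2}}\,|\xi-\eta|^{-\frac{r}{2}}\,d\eta\right)^{\!1/r}.
$$
This is precisely the quantity bounded in the proof of Lemma \ref{Lemma5.1}: the elliptic subcase ($\pm_1=\pm_2$) is handled by \cite{FK}, Lemma 4.3, and the hyperbolic subcase ($\pm_1=+,\pm_2=-$) splits according to whether $|\eta|+|\xi-\eta|\lessgtr 2|\xi|$, handled respectively by \cite{FK}, Proposition 4.5 and Lemma 4.4; in every branch the exponents work out to give $I\lesssim 1$ (using $r>1$ for convergence of the integral on $[2,\infty)$ in the Lemma 4.4 case).

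The only step requiring genuine verification is the symbol bound in the second paragraph, which is where all the algebraic sign configurations in $q_{0j}^\pm$ combined with $(\pm_1,\pm_2)$ must be examined; this is the main (and only) obstacle, and it is resolved by exactly the same trigonometric identity used for $Q_{12}$. The remainder of the proof is a transcription of the case analysis from Lemma \ref{Lemma5.1}.
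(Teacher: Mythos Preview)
Your plan has a genuine gap in the elliptic case. The symbol of $q_{0j}^{\pm}$ on free waves is, as you say, a sum/difference of two \emph{unit} vectors, so its size is $2|\sin(\theta/2)|$ with $\theta=\angle(\pm_1\eta,\pm\pm_2(\xi-\eta))$. By contrast, the symbol of $q_{12}$ is $\sin\theta$. These behave very differently near $\theta=\pi$: $\sin\theta\to 0$ (which is what produces the favourable factor $|\xi|^{1/2}$ in the $q_{12}$ bound), whereas $\sin(\theta/2)\to 1$. Concretely, take $\eta=(1,0)$, $\xi-\eta=(-1+\epsilon,0)$: then $|\xi|=\epsilon$, the true symbol of $q_{0j}^{+}$ has size $\approx 2$, but your claimed bound $|\xi|^{1/2}R^{1/2}/(|\eta|^{1/2}|\xi-\eta|^{1/2})\approx\sqrt{2\epsilon}$ is far too small. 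So the displayed inequality in your second paragraph is false in the elliptic case, and the $I$-integral you write down is not the correct one there.

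The fix (and this is what the paper does) is to use the correct elliptic bound from \cite{FK}, Lemma 13.2,
\[
|q_{0j}^{+}(\eta,\xi-\eta)|\;\lesssim\;\frac{(|\eta|+|\xi-\eta|-|\xi|)^{1/2}}{\min(|\eta|,|\xi-\eta|)^{1/2}}\,,
\]
and then split into the two subcases $|\eta|\le|\xi-\eta|$ and $|\eta|\ge|\xi-\eta|$. In the first subcase the resulting $I$ has weight $|\eta|^{-1-r/2}$ only (no $|\xi|^{1/2}$ prefactor, no $|\xi-\eta|^{-r/2}$); in the second, weight $|\eta|^{-1}|\xi-\eta|^{-r/2}$, and \cite{FK}, Lemma 4.3 produces a logarithmic correction that has to be absorbed. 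Neither computation is a transcription of the one in Lemma \ref{Lemma5.1}. Your treatment of the hyperbolic case, on the other hand, is fine: there the $q_{0j}^{-}$ bound does coincide with the $q_{12}$ bound and the argument from Lemma \ref{Lemma5.1} carries over verbatim.
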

\begin{proof}
	Again we may  reduce to the case $\alpha_1= \frac{1}{r}$ and $\alpha_2=0$ .
Arguing as in the proof of Lemma \ref{Lemma5.1} we use in the elliptic case the estimate (cf. \cite{FK}, Lemma 13.2):
$$|q_{0j}^+(\eta,\xi-\eta)| \lesssim \frac{(|\eta|+|\xi-\eta|-|\xi|)^{\half}}{\min(|\eta|^{\half},|\xi-\eta|^{\half})} \, . $$
In the case  $|\eta| \le |\xi-\eta|$ we obtain
\begin{align*}
I &= ||\tau|-|\xi||^{\half} \left( \int \delta(\tau - |\eta| - |\xi - \eta|) \, |\eta|^{-1-\frac{r}{2}} d\eta \right)^{\frac{1}{r}} \\
&\sim ||\tau|-|\xi||^{\half} |\tau|^{\frac{A}{r}}  ||\tau|-|\xi||^{\frac{B}{r}} = 1\, ,
\end{align*}
because $ A=\max(1+\frac{r}{2},\frac{3}{2}) - 1-\frac{r}{2} = 0$ and $B= 1-\max(1+\frac{r}{2},\frac{3}{2})-\frac{r}{2} = -\frac{r}{2}$ . \\
In the case $|\eta| \ge |\xi-\eta|$ we obtain
\begin{align*}
I &= ||\tau|-|\xi||^{\half} \left( \int \delta(\tau - |\eta| - |\xi - \eta|) \, |\eta|^{-1} |\xi-\eta|^{-\frac{r}{2}} d\eta \right)^{\frac{1}{r}} \\
&\sim ||\tau|-|\xi||^{\half} |\tau|^{\frac{A}{r}}  ||\tau|-|\xi||^{\frac{B}{r}} (1+\log \frac{|\tau|}{||\tau|-|\xi||})^{\frac{1}{r}}\, ,
\end{align*}
where  $A= \max(1,\frac{r}{2},\frac{3}{2})-1-\frac{r}{2} = \half-\frac{r}{2}$ and $B=-\half$, so that
$$I \lesssim ||\tau|-|\xi||^{\half} \tau^{\frac{1}{2r}-\half} ||\tau|-|\xi||^{-\frac  {1}{2r}} \lesssim  1 \, .$$
 In the hyperbolic case we obtain by \cite{FK}, Lemma 13.2:
$$|q_{0j}^-(\eta, \xi-\eta)| \lesssim |\xi|^{\half} \frac{(|\xi|-||\eta|-|\eta-\xi||)^{\half}}{|\eta|^{\half} |\xi-\eta|^{\half}} $$
and argue exactly as in the proof of Lemma \ref{Lemma5.1}. The proof is completed as before.
\end{proof}

\begin{Cor}
\label{Cor.3}
Let $1 < r \le 2$ .
Assume $s\le l+1$ , $l \ge \frac{1}{r}$ , $s \ge 1$ , $ b > \frac{1}{r}$ . The following estimate applies
$$\|q_{0j}(u,Dv)\|_{H^r_{s-1,0}}\lesssim  \|u\|_{X^r_{l,b,\pm_1}} \|v\|_{X^r_{s,b,\pm_2}} \, . $$
\end{Cor}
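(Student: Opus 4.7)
The plan is to mirror the argument used to derive Corollary \ref{Lemma2} from Lemma \ref{Lemma5.1}: reduce the weighted bilinear estimate for $q_{0j}$ to the unweighted estimate supplied by Lemma \ref{Lemma5.2} by distributing the factor $\langle \xi \rangle^{s-1}$ across the two arguments via the fractional Leibniz rule. The nonlinear factor of $D$ applied to $v$ is absorbed trivially using $|\xi|\le \langle \xi \rangle$ together with the hypothesis $s\ge 1$, so no further structural estimate is needed.

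Concretely, I would split
\[
\|q_{0j}(u,Dv)\|_{H^r_{s-1,0}} \lesssim \|q_{0j}(\Lambda^{s-1}u,Dv)\|_{H^r_{0,0}} + \|q_{0j}(u,\Lambda^{s-1}Dv)\|_{H^r_{0,0}} ,
\]
which is the standard fractional Leibniz decomposition for the weight $\langle\xi\rangle^{s-1}$. Each of these two pieces will be estimated by a single application of Lemma \ref{Lemma5.2} with a suitable choice of $\alpha_1,\alpha_2\ge 0$ satisfying $\alpha_1+\alpha_2\ge\tfrac{1}{r}$.

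For the second term I would take $\alpha_1=\tfrac{1}{r}$, $\alpha_2=0$. Since $s\ge 1$ gives $\langle\xi\rangle^{s-1}|\xi|\lesssim \langle\xi\rangle^{s}$, we have $\|\Lambda^{s-1}Dv\|_{X^r_{0,b,\pm_2}}\lesssim \|v\|_{X^r_{s,b,\pm_2}}$, and the hypothesis $l\ge\tfrac{1}{r}$ gives $\|u\|_{X^r_{1/r,b,\pm_1}}\lesssim\|u\|_{X^r_{l,b,\pm_1}}$, which closes this piece. For the first term I would choose $\alpha_2=s-1$ (permitted since $s\ge 1$) and $\alpha_1=\max(\tfrac{1}{r}-(s-1),0)$, so that $\alpha_1+\alpha_2\ge\tfrac{1}{r}$. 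Then $\|Dv\|_{X^r_{\alpha_2,b,\pm_2}}\lesssim\|v\|_{X^r_{s,b,\pm_2}}$ and $\|\Lambda^{s-1}u\|_{X^r_{\alpha_1,b,\pm_1}}\lesssim\|u\|_{X^r_{s-1+\alpha_1,b,\pm_1}}$, so one needs $s-1+\alpha_1\le l$. If $s\ge 1+\tfrac{1}{r}$ then $\alpha_1=0$ and the condition reduces to $s-1\le l$, which is part of the hypothesis $s\le l+1$; otherwise $s-1+\alpha_1=\tfrac{1}{r}\le l$, again by hypothesis.

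There is no essential obstacle here — every step is either a bookkeeping move (fractional Leibniz rule, $|\xi|\le\langle\xi\rangle$) or a direct quotation of Lemma \ref{Lemma5.2}. The only thing requiring a little care is that the choice of $(\alpha_1,\alpha_2)$ must respect both the nonnegativity assumption and the sum condition of Lemma \ref{Lemma5.2} simultaneously with the matching of regularities $l$ and $s$; the case split above ($s<1+\tfrac{1}{r}$ versus $s\ge 1+\tfrac{1}{r}$) is what handles this cleanly.
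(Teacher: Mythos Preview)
Your proposal is correct and follows essentially the same approach as the paper: the corollary is stated as an immediate consequence of Lemma \ref{Lemma5.2}, in exact analogy with how Corollary \ref{Lemma2} is derived from Lemma \ref{Lemma5.1} via the fractional Leibniz rule. The paper gives no separate proof for Corollary \ref{Cor.3} and, for the analogous Corollary \ref{Lemma2}, compresses the Leibniz split and the choice of $(\alpha_1,\alpha_2)$ into a single displayed inequality; your explicit case distinction $s\gtrless 1+\tfrac{1}{r}$ simply spells out what is implicit there.
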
	 

The last term in (\ref{A}) and $R_1$ and $R_2$ in (\ref{41}) are easily  treated by Lemma \ref{Lemma3'} in the following lemma.
\begin{lemma}
\label{Lemma4.2} 
Let $1 < r \le 2$ , $s \ge 0$ , $l> \frac{1}{r}-\frac{3}{2}$ , $ s \le l+1$ and $b> \frac{1}{r}$ . Then the following estimates apply:
\begin{align}
\label{41}
 \| \Lambda^{-1}u \Lambda v\|_{H^r_{s-1,0}} &\lesssim \|u\|_{H^r_{l,b}} \|\Lambda v\|_{H^r_{s-1,b}} \, , \\
\| u v\|_{H^r_{s-1,0}} &\lesssim \|u\|_{H^r_{l,b}} \|\Lambda v\|_{H^r_{s-1,b}} 
\end{align}
\end{lemma}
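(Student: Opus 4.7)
The plan is to reduce both estimates to the multiplication inequality of Lemma \ref{Lemma3'} via a fractional Leibniz decomposition, after absorbing the $\Lambda^{\pm 1}$ multipliers as shifts in the Sobolev index. For the first estimate, I set $f := \Lambda^{-1} u$ and $g := \Lambda v$, so that $\|f\|_{H^r_{l+1, b}} = \|u\|_{H^r_{l, b}}$ and $\|g\|_{H^r_{s-1, b}} = \|\Lambda v\|_{H^r_{s-1, b}}$; the claim then reduces to the multiplication estimate $\|fg\|_{H^r_{s-1,0}} \lesssim \|f\|_{H^r_{l+1,b}} \|g\|_{H^r_{s-1,b}}$. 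For the second estimate, noting $\|\Lambda v\|_{H^r_{s-1,b}} = \|v\|_{H^r_{s,b}}$, the claim is $\|uv\|_{H^r_{s-1,0}} \lesssim \|u\|_{H^r_{l,b}} \|v\|_{H^r_{s,b}}$. Both share the same total regularity budget $l + s$ and the same target weight $s - 1$ on the product, so a single argument handles both.

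To distribute the weight $\langle\xi\rangle^{s-1}$ onto the factors I split on the sign of $s - 1$. For $s \ge 1$, the elementary inequality $\langle\xi\rangle^{s-1} \lesssim \langle\eta\rangle^{s-1} + \langle\xi-\eta\rangle^{s-1}$ applied pointwise in the convolution realizing $fg$ yields $\|fg\|_{H^r_{s-1,0}} \lesssim \|(\Lambda^{s-1}f)\,g\|_{H^r_{0,0}} + \|f\,(\Lambda^{s-1}g)\|_{H^r_{0,0}}$, and each summand falls directly under Lemma \ref{Lemma3'}. Explicitly, in the first term I take $\alpha_1 = l - s + 2$ (nonnegative thanks to $s \le l + 1$) and $\alpha_2 = s - 1$; in the second I take $\alpha_1 = l + 1$ and $\alpha_2 = 0$. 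Either way $\alpha_1 + \alpha_2 = l + 1$, and this is what must exceed $\frac{3}{2r}$. For $0 \le s < 1$ the weight is harmless, $\langle\xi\rangle^{s-1} \le 1$, so $\|fg\|_{H^r_{s-1,0}} \le \|fg\|_{H^r_{0,0}}$ and Lemma \ref{Lemma3'} is invoked once with all the regularity absorbed onto $f$.

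The main (and really the only) nontrivial bookkeeping step is to check that the hypotheses $s \ge 0$, $s \le l + 1$, the condition on $l$, and $b > \frac{1}{r}$ produce admissible exponents for Lemma \ref{Lemma3'}, namely $\alpha_i \ge 0$, $\alpha_1 + \alpha_2 > \frac{3}{2r}$, and $b_1, b_2 > \frac{1}{2r}$ with $b_1 + b_2 > \frac{3}{2r}$. The constraint $s \le l + 1$ provides the excess regularity on the $u$-factor needed to absorb the weight $s - 1$ through Leibniz; the hypothesis on $l$ is precisely what pushes the total above the scaling threshold; and $b > \frac{1}{r} > \frac{1}{2r}$ trivially subsumes the $b$-requirements. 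No null structure is invoked, consistent with the role of this lemma as the control for the benign remainder terms $R_2$ and $R_3$ of \eqref{Qi0} together with the residual $\Lambda^{-2} A_0 \partial_t \phi$ from \eqref{A}.
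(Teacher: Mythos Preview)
Your reduction to Lemma~\ref{Lemma3'} via a Leibniz split is exactly what the paper does as its first step, and for $s\ge 1$ your argument is fine. But the case $0\le s<1$ for the first estimate has a genuine gap. You write that one can ``invoke Lemma~\ref{Lemma3'} once with all the regularity absorbed onto $f$''; however, with $g=\Lambda v$ the only control available is $\|g\|_{H^r_{s-1,b}}$ with $s-1<0$, whereas Lemma~\ref{Lemma3'} requires $\alpha_2\ge 0$, i.e.\ at least $\|g\|_{H^r_{0,b}}$. Discarding the negative weight on the \emph{output} side does nothing to manufacture nonnegative regularity on the \emph{input} factor~$g$. A similar (milder) issue arises for the second estimate once the stated hypothesis allows $l<0$: after dropping the weight you need $\alpha_1=l\ge 0$, which is not guaranteed. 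Relatedly, your bookkeeping claim that ``the hypothesis on $l$ is precisely what pushes the total above the scaling threshold'' is false as written: $l>\tfrac{1}{r}-\tfrac{3}{2}$ only yields $l+1>\tfrac{1}{r}-\tfrac12$, which is strictly smaller than the $\tfrac{3}{2r}$ demanded by Lemma~\ref{Lemma3'}.

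The paper circumvents the negative-index obstruction by \emph{not} relying on Lemma~\ref{Lemma3'} alone. It applies Lemma~\ref{Lemma3'} only at the endpoint $r=1+$ (where $s\ge 1$, $l\ge \tfrac{3}{2r}-1$ suffice), proves the $r=2$ case separately via the $L^2$-based product estimate of Proposition~\ref{Prop.7.1} (which \emph{does} permit negative indices and thus covers $s\ge 0$, $l>0$), and then bilinearly interpolates between these two endpoints to reach intermediate $r$. If you want to keep a direct argument for all $r$, you would need a version of Lemma~\ref{Lemma3'} allowing one negative $\alpha_i$ (via duality), which the paper does not provide.
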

\begin{proof}
By Lemma \ref{Lemma3'} this is satisfied, if  $1 < r \le 2$ , $ s \ge 1$ , $l \ge \frac{3}{2r}-1 $ , $s-1 \le l$ and $b> \frac{1}{r}$ .
We use this result for $r=1+$ .
By Sobolev this estimate is certainly true in the case $r=2$ , $b=\half+$ , if $l>0$ and $s-1 \le l$ . By interpolation between the cases $r=1+$ and $r=2$ we obtain the result.
\end{proof}

Next we estimate the null form $Q_{0j}$  in the case $r=2$ .
\begin{lemma}
\label{Lemma2'} Assume $ s > \half$ , $ l >\frac{1}{4}$ and $2l-s > -\half$ .
Then the following estimate applies:
$$\| Q_{0j}(\Lambda^{-1}u,v)\|_{H^{s-1,-\half++}} \lesssim \|u\|_{X^{l,\frac{3}{4},\pm_1}}
\|Dv\|_{X^{s-1,\half+,\pm_2}} \, $$
\end{lemma}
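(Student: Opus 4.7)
The plan is to use the pointwise symbol decomposition (\ref{40}) to split $Q_{i0}(\Lambda^{-1}u,v)$ into its eight summands and to control each in $H^{s-1,-\half+2\epsilon}$ separately. The last two summands $\Lambda^{-1}u\cdot\Lambda v$ and $uv$ will be dispatched directly by Lemma \ref{Lemma4.2} (applied with $r=2$, $b=\half+$). For each of the remaining six summands carrying a $D_{-}^{\half}$ factor, I will reduce the estimate to a bilinear bound in classical wave-Sobolev spaces and close using Proposition \ref{Prop.7.1}.

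For the two summands in which $D_{-}^{\half}$ acts on the entire product, for instance $D_{-}^{\half}(\Lambda^{-\half}u\cdot Dv)$, I will transfer it through the outer norm by the identity $\|D_{-}^{\half}F\|_{H^{s-1,-\half+2\epsilon}} = \|F\|_{H^{s-1,2\epsilon}}$ and estimate the remaining product in $H^{s-1,2\epsilon}$: since $\Lambda^{-\half}u \in H^{l+\half,\frac{3}{4}+\epsilon}$ via the embedding $X^{l,\frac{3}{4}+,\pm_1}\hookrightarrow H^{l,\frac{3}{4}+}$, and $Dv \in H^{s-1,\half+\epsilon}$, Proposition \ref{Prop.7.1} in the negative-$b_0$ variant applies. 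For the four summands in which $D_{-}^{\half}$ falls on a single factor, I will use $||\tau|-|\xi||^{\half}\le\langle\tau\pm|\xi|\rangle^{\half}$, giving $\|D_{-}^{\half}f\|_{H^{a,b-\half}}\le\|f\|_{X^{a,b,\pm}}$; the factor carrying $D_{-}^{\half}$ then enters the bilinear estimate with $b$-index reduced by $\half$, while the other factor retains its native regularity. In every one of these cases, the enhanced modulation weight $\frac{3}{4}+\epsilon$ on $u$, in place of the usual $\half+\epsilon$, provides exactly the $\frac{1}{4}+\epsilon$ cushion that makes $l>\frac{1}{4}$ sufficient instead of the naive $l>\half$, by precisely the same mechanism highlighted in Lemma \ref{Lemma1''}. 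Low frequencies of $v$, on which $\|Dv\|_{X^{s-1,\half+}}$ degenerates, will be absorbed separately by a trivial Sobolev bound, since $Q_{i0}$ carries a derivative on $v$.

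The main obstacle will be the systematic verification of condition (v) in Proposition \ref{Prop.7.1}, namely
$$s_0+s_1+s_2 \,>\, 1-\min\bigl(b_0+s_1+s_2,\; s_0+b_1+s_2,\; s_0+s_1+b_2\bigr),$$
which is the sharpest of the seven conditions across all six cases. The three quantities compete for the minimum depending on the distribution of modulation weights and on the relative sizes of $l$ and $s$: the hypothesis $2l-s > -\half$ is precisely what rescues the borderline configurations in which the third quantity becomes small due to a large ratio $s/l$, while $s>\half$ and $l>\frac{1}{4}$ handle the generic cases, with the standard $\epsilon$-slack. The remaining six conditions of Proposition \ref{Prop.7.1} turn out to be comfortably satisfied in all six cases.
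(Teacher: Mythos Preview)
Your proposal is correct and follows essentially the same route as the paper's proof: both invoke the symbol decomposition (\ref{40}), dispose of the two non-null terms $\Lambda^{-1}u\,\Lambda v$ and $uv$ by elementary product estimates (the paper says ``by Sobolev'', you cite Lemma~\ref{Lemma4.2} with $r=2$), and reduce the six $D_-^{\half}$-carrying terms to bilinear bounds verified via Proposition~\ref{Prop.7.1}, with the same identification of the critical cases---the $\tfrac{3}{4}+$ modulation on $u$ rescuing the bound to $l>\tfrac14$, and the hypothesis $2l-s>-\tfrac12$ entering exactly in the term corresponding to your ``$D_-^{\half}$ falls on $v$ with $u$ at full regularity'' case (the paper's (\ref{1f})). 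Your explicit separation of the low-frequency regime for $v$ is a point the paper leaves implicit.
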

\begin{proof}
 We use the estimate (\ref{40}). This reduces the claimed estimate to the following eight
inequalities:
\begin{align}
\label{1a}
\|uv\|_{H^{s-1,0+}} & \lesssim \|u\|_{H^{l+\half-,\frac{3}{4}+}}
\|v\|_{H^{s-1,\half+}} \\
\label{1b}
\|uv\|_{H^{s-1,0+}} & \lesssim \|u\|_{H^{l,\frac{3}{4}+}}
\|v\|_{H^{s-\half-,\half+}} \\
\label{1c}
\|uv\|_{H^{s-1,-\half+}} & \lesssim \|u\|_{H^{l+\half-,\frac{1}{4}+}}
\|v\|_{H^{s-1,\half+}} \\
\label{1d}
\|uv\|_{H^{s-1,-\half+}} & \lesssim \|u\|_{H^{l,\frac{1}{4}+}}
\|v\|_{H^{s-\half-,\half+}} \\
\label{1e}
\|uv\|_{H^{s-1,-\half+}} & \lesssim \|u\|_{H^{l+\half-,\frac{3}{4}+}}
\|v\|_{H^{s-1,0}} \\
\label{1f}
\|uv\|_{H^{s-1,-\half+}} & \lesssim \|u\|_{H^{l,\frac{3}{4}+}}
\|v\|_{H^{s-\half-,0}} \\
\label{1g} 
\|uv\|_{H^{s-1,-\half+}} & \lesssim \|u\|_{H^{l+1,\frac{3}{4}+}}
\|v\|_{H^{s-1,\half+}} \\
\label{1h}
\|uv\|_{H^{s-1,-\half+}} & \lesssim \|u\|_{H^{l,\frac{3}{4}+}}
\|v\|_{H^{s,\half+}} \, .
\end{align}
(\ref{1g}) and (\ref{1h}) are easily proven by Sobolev. For the other estimates we apply Prop. \ref{Prop.7.1}. Using the notation in this proposition we require
in all cases:
$s_0+s_1+s_2 = l+\half-  > \frac{3}{4} \, \Leftrightarrow \, l > \frac{1}{4}$
.\\
(\ref{1a}): $l+\half > 1-(l+\half)-(s-1) \, \Leftrightarrow \, 2l+s > 1$ , which
is true in our case $s > \half$ and $l > \frac{1}{4}$ . \\
(\ref{1b}) is true if $2l+s > 1$ . \\
(\ref{1c}): This is the point where it is important to consider $u \in H^{l,b}$
with $ b > \frac{3}{4}$  instead of the standard choice $b=\half+$ . Because of
$b_1= \frac{1}{4}+$ we only need $l+\half > 1-(1-s)-(s-1)-\frac{1}{4} \,
\Leftrightarrow \, l > \frac{1}{4}$ (instead of $l > \half$).\\
(\ref{1d}) and (\ref{1e}) require no additional assumptions. \\
(\ref{1f}): This requires the condition $l+\half > 1-(1-s)-l \, \Leftrightarrow
\, 2l-s > -\half$ .
\end{proof}

\begin{lemma}
\label{Lemma2''}
Let $1 < r \le 2$ , $s > \frac {1}{r}$ , $l > \frac{3}{2r}-\half$ , $2l-s > 
\frac{1}{r}-1$ . Then the following estimate applies:
$$ \|Q_{0j}(\Lambda^{-1}u,v)\|_{H^r_{s-1,\frac{1}{r}-1++}} \lesssim
\|u\|_{X^r_{l,\half+\frac{1}{2r}+}} \|v\|_{X^r_{s,\frac{1}{r}+}} \, . $$
\end{lemma}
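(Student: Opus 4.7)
The proof is by bilinear complex interpolation in the scale $X^r_{s,b,\pm}$, using the interpolation property
$(X^{r_0}_{s_0,b_0,\pm},X^{r_1}_{s_1,b_1,\pm})_{[\theta]} = X^r_{s,b,\pm}$
recalled at the start of Section 2. The two endpoints are $r=2$ and $r=1+$. The endpoint at $r=2$ is exactly Lemma \ref{Lemma2'} (after the equivalence $\|Dv\|_{X^{s-1,\,\half+}}\sim \|v\|_{X^{s,\,\half+}}$), with hypotheses $s_2>\half$, $l_2>\tfrac14$, $2l_2-s_2>-\half$ and Sobolev exponents $b_u=\tfrac34+$, $b_v=\half+$, $b_{\mathrm{target}}=-\half++$.

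The task is therefore to establish the $r=1+$ endpoint
\[
\|Q_{0j}(\Lambda^{-1}u,v)\|_{H^r_{s_1-1,\,0+}} \lesssim \|u\|_{X^r_{l_1,\,1+}}\,\|v\|_{X^r_{s_1,\,1+}}
\qquad (r=1+)
\]
under $s_1,l_1>1$ and $2l_1-s_1>0$. For this I would invoke the pointwise symbol decomposition \eqref{40}, which expresses $Q_{0j}(\Lambda^{-1}u,v)$ as a sum of eight terms: six ``null'' pieces carrying a factor $D_-^{\half}$ (either outside the product or on one factor), plus the two residual pieces $\Lambda^{-1}u\,\Lambda v$ and $uv$. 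The residual pieces are handled directly by Lemma \ref{Lemma4.2} under the hypotheses $s_1,l_1>1$. For the null pieces with $D_-^{\half}$ outside, duality moves the weight into the target and reduces the claim to an $H^r_{s-1,\,\half+}$ product estimate, supplied by Lemma \ref{Lemma5.6} together with the fractional Leibniz rule. For the null pieces where $D_-^{\half}$ sits on one factor, one $b$-weight drops by $\half$, and the resulting estimate is a consequence of the modified nullform bound in Lemma \ref{Lemma5.2} combined with Lemma \ref{Lemma3'}; each sub-estimate closes strictly under $s_1,l_1>1$, $2l_1-s_1>0$.

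With both endpoints in place, complex interpolation with parameter $\theta\in[0,1]$ produces the interpolated relations $\tfrac{1}{r}=(1-\theta)+\tfrac{\theta}{2}$, $s=(1-\theta)s_1+\theta s_2$, $l=(1-\theta)l_1+\theta l_2$, and Sobolev exponents
\[
b_u=(1-\theta)(1+)+\theta(\tfrac34+) = \tfrac12+\tfrac{1}{2r}+,\quad
b_v=(1-\theta)(1+)+\theta(\tfrac12+)= \tfrac{1}{r}+,\quad
b_{\mathrm{target}}=(1-\theta)(0+)+\theta(-\tfrac12++)= \tfrac{1}{r}-1++,
\]
matching the statement exactly. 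One checks directly that the three linear hypotheses $s>\tfrac{1}{r}$, $l>\tfrac{3}{2r}-\half$, $2l-s>\tfrac{1}{r}-1$ of the lemma are precisely the convex combinations of the endpoint hypotheses. The main obstacle is the bookkeeping in the eight-term decomposition at $r=1+$: one must verify that each sub-estimate closes under the weak conditions $s_1,l_1>1$ and $2l_1-s_1>0$, and that the various $D_-^{\half}$ re-distributions are compatible with the hypotheses of Lemmas \ref{Lemma5.1}, \ref{Lemma5.2}, \ref{Lemma5.6}, and \ref{Lemma3'}; once this is done, the interpolation step itself is routine given the interpolation properties of $X^r_{s,b,\pm}$.
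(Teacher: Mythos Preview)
Your interpolation strategy between $r=2$ (Lemma \ref{Lemma2'}) and $r=1+$ is exactly the paper's approach, and the computation of the interpolated exponents is correct. The difference lies in how the $r=1+$ endpoint is obtained.

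The paper does \emph{not} go through the full eight-term expansion \eqref{40} at $r=1+$. Instead it uses the coarser symbol bound \eqref{38} directly, which gives
\[
Q_{0j}(\Lambda^{-1}u,v)\ \precsim\ \sum_{\pm} q_{0j}^{\pm}(u,Dv)\ +\ \Lambda^{-1}u\,Dv\ +\ uv,
\]
and then applies Corollary \ref{Cor.3} (i.e.\ Lemma \ref{Lemma5.2} with the fractional Leibniz rule) to the $q_{0j}^{\pm}(u,Dv)$ piece, and Lemma \ref{Lemma4.2} to the two residuals. The required conditions at $r=1+$ are $s_1\ge 1$, $l_1\ge 1$, $s_1\le l_1+1$; the paper then observes that these force $2l_1-s_1\ge l_1-1\ge 0$, which is what is needed for the interpolation to produce $2l-s>\tfrac{1}{r}-1$.

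Your route via \eqref{40} is workable in principle, but there is a slip in the bookkeeping: once you pass from \eqref{38} to \eqref{40} using the angle estimate, the individual pieces are no longer of the form $q_{0j}^{\pm}$, so invoking Lemma \ref{Lemma5.2} for ``the null pieces where $D_-^{1/2}$ sits on one factor'' is misplaced. Those terms (e.g.\ $(D_-^{1/2}\Lambda^{-1/2}u)\,Dv$) are ordinary products with one reduced $b$-weight and must be handled by the product estimate Lemma \ref{Lemma3'}, not by the null-form lemma. Also note that your stated $r=1+$ hypotheses $s_1,l_1>1$, $2l_1-s_1>0$ are not quite sufficient for the residual terms: Lemma \ref{Lemma4.2} needs $s_1\le l_1+1$, which is the standing assumption the paper uses (and which in turn \emph{implies} your condition $2l_1-s_1\ge 0$). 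With these corrections your argument goes through, but the paper's route via Corollary \ref{Cor.3} is shorter because it absorbs the six null terms into a single application of Lemma \ref{Lemma5.2} rather than unpacking the angle bound.
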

\begin{proof}
We remark, that by (\ref{38}) we haven proven that
$$ Q_{0j}(\Lambda^{-1}u,v) \precsim \sum_{+,-} q_{0j}^{\pm}(u,Dv)+ \Lambda^{-1} u Dv + uv \, . $$
We interpolate between Cor. \ref{Cor.3} (and Lemma \ref{Lemma4.2}) for the case $r=1+$ , which requires $ s
\ge 1$ and $l \ge 1$ as well as $s-1 \le l$ , which implies $2l-s > 2l-(l+1)=
l-1 \ge 0$ , and Lemma \ref{Lemma2'} for $r=2$ , which requires $ s > \half$ ,
$l > \frac{1}{4}$ and $2l-s >-\half$. This completes the proof, as one easily
checks.
\end{proof}

\begin{lemma}
\label{Lemma3}
Let $1<r \le 2$ , $ b > \frac{1}{r}$ , $s\ge l$ , $l \ge 1$ and $2s-l >
\frac{3}{2r}$ .Then
$$ \| Im(\phi \overline{\partial \phi})\|_{H^r_{l-1}} \lesssim
\|\phi\|_{H^r_{s,b}} ^2 \, .$$
\end{lemma}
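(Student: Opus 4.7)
The nonlinearity $Im(\phi\overline{\partial\phi})$ is of the schematic form $\phi\cdot\partial\phi$ and carries no null structure in the Lorenz gauge, so the target norm $H^r_{l-1}=H^r_{l-1,0}$ must be controlled by a pure product estimate. Since $l-1\ge 0$, my plan is to apply the fractional Leibniz rule (equivalently, the pointwise Fourier bound $\langle\xi\rangle^{l-1}\lesssim\langle\eta\rangle^{l-1}+\langle\xi-\eta\rangle^{l-1}$) in order to distribute the $l-1$ outer derivatives onto one of the two factors:
$$\|\phi\cdot\partial\phi\|_{H^r_{l-1,0}}\;\lesssim\;\|(\Lambda^{l-1}\phi)\cdot\partial\phi\|_{H^r_{0,0}}+\|\phi\cdot\Lambda^{l-1}(\partial\phi)\|_{H^r_{0,0}}.$$

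Next I would feed each piece into Lemma \ref{Lemma3'}. For the first term, viewed as a product in $H^r_{\alpha_1,b}\cdot H^r_{\alpha_2,b}$ with $\alpha_1=s-(l-1)$ and $\alpha_2=s-1$, both indices are nonnegative thanks to $s\ge l\ge 1$, and the hypothesis $\alpha_1+\alpha_2>\frac{3}{2r}$ reads exactly $2s-l>\frac{3}{2r}$, which is assumed. For the second term, the choice $\alpha_1=s$ and $\alpha_2=s-l\ge 0$ again yields the same threshold $2s-l>\frac{3}{2r}$. The auxiliary conditions $b_1,b_2>\frac{1}{2r}$ and $b_1+b_2>\frac{3}{2r}$ of Lemma \ref{Lemma3'} follow from $b>\frac{1}{r}$.

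The derivative factor $\partial\phi$ includes $\partial_t$ as well as $\partial_1,\partial_2$; to bound it in $H^r_{s-1,b}$ I would use the standard splitting $\phi=\phi_++\phi_-$ with $\phi_\pm\in X^r_{s,b,\pm}$ adopted throughout the paper. For these components $\partial_t\phi_\pm=\pm i\Lambda\phi_\pm$, whence $\|\partial_\mu\phi_\pm\|_{H^r_{s-1,b}}\lesssim\|\phi_\pm\|_{X^r_{s,b,\pm}}$ for every index $\mu=0,1,2$; hence also $\|\Lambda^{l-1}\partial\phi\|_{H^r_{s-l,b}}\lesssim\|\phi\|_{X^r_{s,b}}$, which in the paper's shorthand is $\|\phi\|_{H^r_{s,b}}$.

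There is no genuine obstacle here: the absence of a null form on the right-hand side of the Lorenz-gauge wave equation is precisely what forces the product threshold $2s-l>\frac{3}{2r}$ (the same numerical threshold already appears in Lemma \ref{Lemma3*}, where however the null structure enlarges the admissible range to $l\le s+\frac{1}{2}$). The requirement $l\ge 1$ is only used to apply the fractional Leibniz rule at nonnegative order, and $s\ge l$ ensures nonnegativity of the dual index $\alpha_2=s-l$ in the second term.
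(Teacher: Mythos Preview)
Your proposal is correct and follows exactly the paper's approach: the paper's proof is the single sentence ``An application of the fractional Leibniz rule and Lemma \ref{Lemma3'} implies the result,'' and you have spelled out precisely how that works, checking that $\alpha_1,\alpha_2\ge 0$ and $\alpha_1+\alpha_2=2s-l>\frac{3}{2r}$ in both distributions of the $l-1$ derivatives. Your extra remark on handling $\partial_t\phi$ via the splitting $\phi=\phi_++\phi_-$ is not in the paper's proof but is consistent with the surrounding framework (and indeed the interpolated version, Lemma \ref{Lemma3''''}, states the bound with $\|\partial\phi\|_{H^r_{s-1,\cdot}}$ on the right).
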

\begin{proof}
An application of  the fractional Leibniz rule and Lemma \ref{Lemma3'} implies
the result.
\end{proof}

\begin{lemma}
\label{Lemma3b}
Assume $s \ge l$ , $ s \ge \half$ and $2s-l > \frac{3}{4}$ . The following
estimate applies:
$$ \| Im(\phi \overline{\partial \phi})\|_{H^{l-1,-\frac{1}{4}++}} \lesssim
\|\phi\|_{H^{s,\half+}} \|\partial \phi\|_{H^{s-1,\half+}} \, . $$
\end{lemma}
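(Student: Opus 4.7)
The plan is to reduce the claim to a bilinear wave-Sobolev estimate and invoke Proposition \ref{Prop.7.1}. Since $Im(\phi\,\overline{\partial\phi})$ is a finite linear combination of products of the form $\phi\cdot\partial_\mu\phi$ (and complex conjugates), and wave-Sobolev norms are invariant under complex conjugation and depend only on the modulus of the spacetime Fourier transform, it suffices to prove
\[
\|uv\|_{H^{l-1,-\frac{1}{4}++}} \lesssim \|u\|_{H^{s,\half+}}\,\|v\|_{H^{s-1,\half+}},
\]
taking $u = \phi$ and $v = \partial_\mu\phi$.

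I would then apply Prop. \ref{Prop.7.1} with the identification $s_0 = 1-l$, $s_1 = s$, $s_2 = s-1$, $b_0 = \tfrac{1}{4}-$, $b_1 = b_2 = \half+$; all of the $b_j$ are non-negative, so the negative-$b_0$ clause is not invoked. Every condition of the proposition that amounts to a lower bound on $s_0+s_1+s_2 = 2s-l$ (namely the second, third, fourth, and sixth) is absorbed by the hypothesis $2s-l > \tfrac{3}{4}$, which is strict and thereby disposes of the endpoint caveat. The pairwise positivity $\min_{i\neq j}(s_i+s_j) \ge 0$ reduces to $s \ge \half$ (for $s_1+s_2 = 2s-1$) and $s \ge l$ (for $s_0+s_1 = s-l+1$ and $s_0+s_2 = s-l$), both of which are hypothesized.

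The one condition requiring real thought is the fifth, namely $s_0+s_1+s_2 > 1 - \min\bigl(b_0+s_1+s_2,\; s_0+b_1+s_2,\; s_0+s_1+b_2\bigr)$, whose relevant minimum is $\min\bigl(2s-\tfrac{3}{4}-,\; s-l+\tfrac{1}{2}+\bigr)$. I would split by the sign of $s+l - \tfrac{5}{4}$. In the regime $s+l \le \tfrac{5}{4}$ the minimum is $2s-\tfrac{3}{4}-$ and the required inequality becomes $4s-l > \tfrac{7}{4}$; for $s > \half$ this follows from $4s-l = 2s + (2s-l) > 1 + \tfrac{3}{4}$, and at the endpoint $s = \half$ the hypothesis $2s-l > \tfrac{3}{4}$ forces $l < \tfrac{1}{4}$, giving $4s-l > 2 - \tfrac{1}{4} = \tfrac{7}{4}$ directly. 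In the regime $s+l \ge \tfrac{5}{4}$ the minimum is $s-l+\tfrac{1}{2}+$ and the required inequality becomes $3s-2l > \tfrac{1}{2}$, which follows from $3s-2l = (2s-l) + (s-l) > \tfrac{3}{4} + 0$ via $s \ge l$. There is no serious obstacle: the proof is a parameter check, strictly parallel to the verifications already done for Lemma \ref{Lemma3''} and Lemma \ref{Lemma4''}.
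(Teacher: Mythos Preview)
Your proof is correct and follows the same approach as the paper: both reduce the estimate to a direct application of Proposition~\ref{Prop.7.1} with $s_0=1-l$, $s_1=s$, $s_2=s-1$, $b_0=\tfrac{1}{4}-$, $b_1=b_2=\tfrac{1}{2}+$. The paper's verification is terser, noting only that $2s-l>\tfrac{3}{4}$ and $s\ge\tfrac{1}{2}$ together force $4s-l>\tfrac{7}{4}$, whereas you carry out the full case split explicitly; but the substance is identical.
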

\begin{proof}
We use Prop. \ref{Prop.7.1}, which requires $2s-l > \frac{3}{4}$ and $s \ge
\half$ . This implies also  $4s-l > \frac{7}{4}$ , which is also required. 
\end{proof}

\begin{lemma}
\label{Lemma3''''}
Let $1 <  r \le 2$ , $s \ge l$ , $l > \frac{3}{2r}-\half$ , $2s-l >
\frac{3}{2r}$ . The following estimate applies:
$$ \|Im(\phi \overline{\partial \phi})\|_{H^r_{l-1,\frac{1}{2r}-\half++}}
\lesssim \|\phi\|_{H^r_{s,\frac{1}{r}+}} \|\partial
\phi\|_{H^r_{s-1,\frac{1}{r}+}} \, . $$
\end{lemma}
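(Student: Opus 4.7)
The plan is to prove this Fourier--Lebesgue estimate by bilinear complex interpolation between the two endpoint results already established earlier in the paper: Lemma \ref{Lemma3} at the Fourier--Lebesgue endpoint $r_0 = 1+$, and Lemma \ref{Lemma3b} at the classical endpoint $r_1 = 2$. Viewing $\phi \mapsto \mathrm{Im}(\phi\,\overline{\partial\phi})$ as the action of the bilinear operator $B(\phi,\psi) := \mathrm{Im}(\phi\,\overline{\psi})$ on the pair $(\phi,\partial\phi)$, the complex interpolation property of the $H^r_{s,b}$ scale recorded in Section 2, together with the standard bilinear complex interpolation theorem, will deliver the estimate once admissible endpoint exponents are chosen.

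At $r_0 = 1+$, Lemma \ref{Lemma3} provides
$$\|B(\phi,\partial\phi)\|_{H^{r_0}_{l_0-1,0}} \lesssim \|\phi\|_{H^{r_0}_{s_0,1/r_0+}} \|\partial\phi\|_{H^{r_0}_{s_0-1,1/r_0+}}$$
under $s_0 \ge l_0 \ge 1$ and $2s_0 - l_0 > \frac{3}{2}$, and at $r_1 = 2$, Lemma \ref{Lemma3b} provides
$$\|B(\phi,\partial\phi)\|_{H^{2}_{l_1-1,-1/4++}} \lesssim \|\phi\|_{H^{2}_{s_1,1/2+}} \|\partial\phi\|_{H^{2}_{s_1-1,1/2+}}$$
under $s_1 \ge l_1$, $s_1 \ge \half$, and $2s_1 - l_1 > \frac{3}{4}$. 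Choosing the interpolation parameter $\theta = 2 - \frac{2}{r} \in (0,1)$ so that $\frac{1}{r} = \frac{1-\theta}{r_0} + \frac{\theta}{r_1}$, bilinear interpolation then yields the claimed inequality with spatial exponents $l = (1-\theta)l_0 + \theta l_1$ and $s = (1-\theta)s_0 + \theta s_1$; interpolated LHS weight $(1-\theta)\cdot 0 + \theta\cdot(-\frac{1}{4}++) = \frac{1}{2r} - \half ++$; and interpolated RHS weight $(1-\theta)(\frac{1}{r_0}+) + \theta(\frac{1}{2}+) = \frac{1}{r}+$. All three match the target statement exactly.

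It remains to check that any $(s,l)$ satisfying the hypotheses $s \ge l$, $l > \frac{3}{2r}-\half$, $2s-l > \frac{3}{2r}$ is indeed representable as such a convex combination of admissible endpoint exponents. The constraint $s \ge l$ is preserved under convex combinations; the quantitative constraint
$$2s - l = (1-\theta)(2s_0 - l_0) + \theta(2s_1 - l_1) > (1-\theta)\cdot\tfrac{3}{2} + \theta\cdot\tfrac{3}{4} = \tfrac{3}{2r}$$
matches the assumption. Since the $r_1 = 2$ endpoint imposes no lower bound on $l_1$ beyond $l_1 \le s_1$, the lower bound $l > \frac{3}{2r}-\half$ can be met by taking $l_0$ just above $1$ and adjusting $l_1$ so that the convex combination lands at the prescribed value. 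The main (and essentially only) obstacle I anticipate is this feasibility bookkeeping: a short linear-programming check, entirely analogous to those in the proofs of Lemma \ref{Lemma3a} and Lemma \ref{Lemma5'} above, which should present no real technical difficulty.
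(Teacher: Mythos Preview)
Your proposal is correct and follows essentially the same route as the paper: bilinear interpolation between Lemma~\ref{Lemma3} at $r=1+$ (with $s_0\ge l_0\ge 1$, $2s_0-l_0>\tfrac{3}{2}$) and Lemma~\ref{Lemma3b} at $r=2$ (with $s_1\ge l_1$, $2s_1-l_1>\tfrac{3}{4}$), yielding the stated exponents. The only minor difference is that the paper additionally imposes $l_1>\tfrac{1}{4}$ at the $r=2$ endpoint (justified as being required elsewhere), which is exactly what makes the interpolated lower bound $l>\tfrac{3}{2r}-\tfrac{1}{2}$ come out; your feasibility discussion is compatible with this once you note that taking $l_0>1$ and $l_1>\tfrac{1}{4}$ gives precisely $(1-\theta)\cdot 1+\theta\cdot\tfrac{1}{4}=\tfrac{3}{2r}-\tfrac{1}{2}$.
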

\noindent Remark: The assumptions of the lemma imply $ s > \frac{3}{2r} -
\frac{1}{4}$ .
\begin{proof}
By Lemma \ref{Lemma3} for $r=1+$ we obtain
$$ \|Im(\phi \overline{\partial \phi})\|_{H^r_{l_1-1,0}} \lesssim
\|\phi\|_{H^r_{s_1,\frac{1}{r}+}} \|\partial \phi\|_{H^r_{s_1-1,\frac{1}{r}+}}
$$
for $s_1 \ge l_1 \ge 1$ , $2s_1-l_1 > \frac{3}{2}$ , which implies $s_1 >
\frac{5}{4}$, and Lemma \ref{Lemma3b} implies
$$ \|Im(\phi \overline{\partial \phi})\|_{H^{l_2-1,-\frac{1}{4}++}} \lesssim
\|\phi\|_{H^{s_2,\frac{1}{2}+}} \|\partial \phi\|_{H^{s_2-1,\frac{1}{2}+}} \, ,
$$ 
provided $s_2\ge l_2$ and $2s_2-l_2 > \frac{3}{4}$ . We may also assume $l_2 >
\frac{1}{4}$ , because this condition is required for other estimates already,
which implies automatically $s_2 > \half$ . Bilinear interpolation between these
inequalities implies the claimed result.
\end{proof}

\begin{proof}[Proof of Theorem \ref{Theorem2}]
We apply Theorem \ref{Theorem0.3} to the system (\ref{2.1}) and (\ref{2.2}). Using (\ref{Aalpha}),(\ref{P2}) and (\ref{A}) the necessary estimates for the term $A^{\mu}
\partial_{\mu} \phi$ were given in Lemma \ref{Lemma1'}, Lemma \ref{Lemma2''} and
Lemma \ref{Lemma4.2}. Moreover we need Lemma \ref{Lemma3''''}, Lemma
\ref{Lemma4'} and Lemma \ref{Lemma5'}.  We just have to check the assumptions.
The most restrictive condition on $l$ is $l > \frac{7}{4r}-\frac{5}{8}$ (cf.
Lemma \ref{Lemma5'}). Thus the condition $2s-l > \frac{3}{2r}$ (cf.
Lemma \ref{Lemma3''''}) is only compatible, if $s > \frac{13}{8r}-\frac{5}{16}$.
Moreover we assumed  $2l-s > \frac{2}{r}-\frac{5}{4}$. The remaining conditions
are weaker.
\end{proof}

\end{document}